\newcommand{\Z}{\mathbb{Z}}
\newcommand{\Q}{\mathbb{Q}}
\newcommand{\aQ}{\overline{\mathbb{Q}}}
\newcommand{\C}{\mathbb{C}}
\newcommand{\Ax}{\mathbb{A}^{\times}}
\newcommand{\A}{\mathbb{A}}
\newcommand{\Kl}{K^{\lambda}}
\newcommand{\Km}{K^{\mu}}
\newcommand{\DK}{\Delta_{K}}
\newcommand{\OK}{\mathcal{O}_K}
\newcommand{\OL}{\mathcal{O}_L}
\newcommand{\q}{\mathfrak{q}}
\newcommand{\Pcq}{\mathcal{P}^{\mathfrak{q}}}
\newcommand{\OLq}{\mathcal{O}_{L^{\mathfrak{q}}}}
\newcommand{\Hq}{H^{\mathfrak{q}}}
\newcommand{\gq}{\gamma_{\mathfrak{q}}}
\newcommand{\alq}{\alpha_{\mathfrak{q}}}
\newcommand{\Kq}{K_{\mathfrak{q}}}
\newcommand{\kq}{k_{\mathfrak{q}}}
\newcommand{\Iq}{I_{\mathfrak{q}}}
\newcommand{\eq}{e_{\mathfrak{q}}}
\newcommand{\sq}{\sigma_{\mathfrak{q}}}
\newcommand{\sqpr}{\sigma_{\mathfrak{q}'}}
\newcommand{\sqb}{\overline{\sigma}_{\mathfrak{q}}}
\newcommand{\Lq}{L^{\mathfrak{q}}}
\newcommand{\bq}{\beta_{\mathfrak{q}}}
\newcommand{\bqpr}{\beta_{\mathfrak{q}'}}
\newcommand{\bqb}{\overline{\beta_{\mathfrak{q}}}}
\newcommand{\bqprb}{\overline{\beta_{\mathfrak{q}'}}}
\newcommand{\Pq}{P_{\mathfrak{q}}}
\newcommand{\Tq}{T_{\mathfrak{q}}}
\newcommand{\phiEp}{\varphi_{E,p}}
\newcommand{\si}{\sigma}
\newcommand{\Frob}{\mathrm{Frob}}
\newcommand{\h}{\mathrm{H}}
\newcommand{\p}{\mathfrak{p}}
\newcommand{\po}{\mathfrak{p}_0}
\newcommand{\poq}{\mathfrak{p}^{\mathfrak{q}}_0}
\newcommand{\Poq}{\mathcal{P}^{\mathfrak{q}}_0}
\newcommand{\Poqpr}{\mathcal{P}^{\mathfrak{q}'}_0}
\newcommand{\pL}{\mathfrak{p}_L}
\newcommand{\cN}{\mathcal{N}}
\newcommand{\Zp}{\mathbb{Z}_{p}}
\newcommand{\Qp}{\mathbb{Q}_{p}}
\newcommand{\Kp}{K_{\mathfrak{p}}}
\newcommand{\Fp}{\mathbb{F}_{p}}
\newcommand{\Fpx}{\mathbb{F}_{p}^{\times}}
\newcommand{\Ip}{I_{\mathfrak{p}}}
\newcommand{\ep}{e_{\mathfrak{p}}}
\newcommand{\rp}{r_{\mathfrak{p}}}
\newcommand{\ap}{a_{\p}}
\newcommand{\ato}{a_{\tau}}
\newcommand{\GL}{\mathrm{GL}}
\newcommand{\Aut}{\mathrm{Aut}}
\newcommand{\Gal}{\mathrm{Gal}}
\newcommand{\al}{\alpha}
\newcommand{\la}{\lambda}
\newcommand{\lad}{\lambda^{12}}
\newcommand{\amu}{\overline{\mu}}
\newcommand{\kip}{\chi_p}
\newcommand{\J}{\mathcal{J}}
\newcommand{\Mod}{\mkern-5mu \mod}
\def\Overline #1#2#3%
\newtheorem*{TheoIrr}{Th\'eor\`eme I (Deux crit\`eres d'irr\'eductibilit\'e)}
\newtheorem{lemme}{Lemme}
\newtheorem*{TheoSerre}{Th\'eor\`eme (Serre,~\cite{[Se]})}
\newtheorem*{TheoMazur}{Th\'eor\`eme (Mazur,~\cite{[Maz]})}
\newtheorem*{TheoMomEff}{Th\'eor\`eme II (Version effective du th\'eor\`eme~A de~\cite{[Mom]})}
\newtheorem*{TheoCheboEff}{Th\'eor\`eme (Chebotarev effectif,~\cite{[LMO]})}
\newtheorem*{corointro}{Corollaire}
\newtheorem*{TheoHomRed}{Th\'eor\`eme III (Homoth\'eties dans le cas r\'eductible)}
\newtheorem*{TheoHom}{Th\'eor\`eme III' (Homoth\'eties)}
\newtheorem*{Question}{Question}
\theoremstyle{plain}
\newtheorem{theoreme}{Th\'eor\`eme}[section]
\newtheorem{proposition}[theoreme]{Proposition}
\theoremstyle{remark}
\newtheorem{definition}[theoreme]{D\'efinition}
\newtheorem{remarques}[theoreme]{Remarques}
\newtheorem{notation}[theoreme]{Notation}
\newtheorem{notations}[theoreme]{Notations}
\newtheorem*{notintro}{Notations}
\newtheorem*{rmqintro}{Remarques}
\title{Caract\`ere d'isog\'enie et crit\`eres d'irr\'eductibilit\'e}
\author{Agn\`es David \\
 {\small Laboratoire de math\'ematiques de Versailles } \\
{\small  Universit\'e de Versailles Saint-Quentin-en-Yvelines } \\
{\small 45 avenue des \'Etats-Unis }\\
{\small  78035 Versailles Cedex } \\
{\small  Agnes.David@ens-lyon.org} 
}
\date{} 
\begin{document}

\maketitle

\begin{abstract}
This article deals with the Galois representation attached to the torsion points of an elliptic curve defined over a number field.
We first determine explicit uniform criteria for the irreducibility of this Galois representation for elliptic curves varying in some infinite families, characterised by their reduction type at some fixed places of the base field.
Then, we deduce from these criteria an explicit form for a bound that appear in a theorem of Momose. Finally, we use these results to precise a previous theorem of the author about the homotheties contained in the image of the Galois representation. 
\end{abstract}

\section*{Introduction}


Cet article traite de la repr\'esentation galoisienne associ\'ee aux points de torsion d'une courbe elliptique d\'efinie sur un corps de nombres.
Son objectif est triple~:
 on d\'etermine d'abord des crit\`eres uniformes d'irr\'eductibilit\'e de cette repr\'esentation galoisienne pour des familles infinies de courbes elliptiques, d\'efinies par un type de r\'eduction prescrit en certaines places du corps de base
 (th\'eor\`eme~I ci-dessous)~;
 on donne ensuite une forme explicite pour une borne annonc\'ee dans un th\'eor\`eme de Momose sur les courbes elliptiques possédant sur un corps de nombres une isogénie de degré premier (th\'eor\`eme~A de l'introduction de~\cite{[Mom]}~; th\'eor\`eme~II ci-dessous)~;
enfin, on d\'eduit de ces travaux une borne uniforme pour les homoth\'eties contenues dans l'image de la repr\'esentation galoisienne consid\'er\'ee, lorsqu'elle est r\'eductible, qui pr\'ecise des r\'esultats pr\'ec\'edents de l'auteur (\cite{[Hudig]}~; th\'eor\`emes~III et~III' ci-dessous).

Le cadre pr\'ecis est le suivant.
On fixe un corps de nombres~$K$ et un plongement de~$K$ dans~$\C$ (dans tout le texte, on consid\'erera ainsi~$K$ comme un sous-corps de~$\C$). 
Soit~$\aQ$ la cl\^oture alg\'ebrique de~$\Q$ dans~$\C$~; on note~$G_K$ le groupe de Galois absolu~$\Gal(\aQ / K)$ de~$K$.
\`A partir de la partie~\ref{sec:compat} et pour toute la fin du texte, on suppose que l'extension~$K/\Q$ est galoisienne.
On fixe une courbe elliptique~$E$ d\'efinie sur $K$ et on note~$j(E)$ son invariant~$j$.
Pour toute place finie du corps~$K$, on dit que le \emph{type de r\'eduction semi-stable} de~$E$ en cette place est \emph{multiplicatif}, \emph{bon ordinaire} ou \emph{bon supersingulier} selon si~$E$ a potentiellement mauvaise r\'eduction multiplicative, bonne r\'eduction ordinaire ou bonne r\'eduction supersinguli\`ere en cette place.
On fixe un nombre premier~$p$ sup\'erieur ou \'egal \`a~$5$ et non ramifi\'e dans~$K$.
 On note~$E_p$ l'ensemble des points de~$E$ dans~$\aQ$ qui sont de~$p$-torsion~; 
  c'est un espace vectoriel de dimension~$2$ sur~$\Fp$, sur lequel le groupe de Galois absolu~$G_K$ de~$K$ agit~$\Fp$-lin\'eairement.
On d\'esigne par~$\phiEp$ la repr\'esentation de~$G_K$ ainsi obtenue~; 
elle prend ses valeurs dans le groupe~$\GL(E_p)$ qui, apr\`es choix d'une base pour~$E_p$, est isomorphe \`a~$\GL_2(\Fp)$.
On note enfin~$\kip$ le caract\`ere cyclotomique de~$G_K$ dans~$\Fpx$~; il co\"incide avec le d\'eterminant de la repr\'esentation~$\phiEp$.


Les questions trait\'ees dans cet article trouvent leur origine dans le th\'eor\`eme suivant de Serre selon lequel, 
lorsque la courbe~$E$ n'a pas de multiplication complexe, la repr\'esentation~$\phiEp$ est \og asymptotiquement surjective \fg.

\begin{TheoSerre}
 On suppose que la courbe $E$ n'a pas de multiplication complexe (sur~$\aQ$). Alors il existe une borne~$C(K,E)$, ne d\'ependant que de~$K$ et de~$E$ et telle que pour tout nombre premier~$p$ strictement sup\'erieur \`a~$C(K,E)$, la repr\'esentation~$\phiEp$ est surjective.
\end{TheoSerre}

La question, pos\'ee \'egalement dans~\cite{[Se]}, d'\'eliminer la d\'ependance en la courbe elliptique~$E$ dans la borne~$C(K , E)$, pour obtenir une version uniforme de ce th\'eor\`eme, s'est r\'ev\'el\'ee ardue.
Mazur a n\'eanmoins d\'emontr\'e que lorsque le corps de base est~$\Q$, la repr\'esentation~$\phiEp$ est  \og uniform\'ement asymptotiquement irr\'eductible \fg\  au sens suivant.

\begin{TheoMazur}
 On suppose que le corps~$K$ est \'egal \`a~$\Q$ et que le nombre premier~$p$ n'appartient pas \`a l'ensemble~$\{2, 3, 5, 7, 13, 11, 17, 19, 37, 43, 67, 163 \}$.
 Alors la repr\'esentation~$\phiEp$ est irr\'eductible.
\end{TheoMazur}
Momose a ensuite obtenu un r\'esultat semblable (avec une borne non effective) lorsque le corps de base est un corps quadratique qui n'est pas imaginaire de nombre de classes~$1$ (th\'eor\`eme~B de l'introduction de~\cite{[Mom]}).

Dans la lign\'ee du th\'eor\`eme de Mazur, il est naturel de consid\'erer le probl\`eme suivant (qui figure par exemple dans l'introduction de~\cite{[Bil]}).
\begin{Question}
Trouver un ensemble infini~$\mathcal{E}$ de courbes elliptiques d\'efinies sur~$K$ et une borne~$C(K,\mathcal{E}) $ ne d\'ependant que de~$K$ et de l'ensemble~$\mathcal{E}$ et v\'erifiant~: 
si~$E$ appartient \`a~$\mathcal{E}$ et~$p$ est strictement sup\'erieur \`a~$C(K,\mathcal{E}) $, alors la repr\'esentation~$\phiEp$ est irr\'eductible.
\end{Question} 
La g\'en\'eralisation du th\'eor\`eme de Mazur au corps de base~$K$ consisterait \`a r\'esoudre cette question pour l'ensemble~$\mathcal{E}$ des courbes elliptiques d\'efinies sur~$K$ qui n'ont pas de multiplication complexe (sur~$\aQ$).


Le pr\'esent article apporte une r\'eponse \`a la question ci-dessus, avec des bornes~$C(K,\mathcal{E}) $ explicites, pour deux classes d'ensembles infinis de courbes elliptiques, d\'efinis par un type de r\'eduction semi-stable des courbes prescrit en des places fix\'ees du corps de base.  

\begin{notintro}
On suppose que le corps~$K$ est galoisien sur~$\Q$.

On note~$d_K$ son degr\'e,~$\Delta_K$ son discriminant,~$h_K$ son nombre de classes d'id\'eaux, et~$R_K$ son r\'egulateur.
On note~$C_1(K)$ la borne~$c_3$ de~\cite{[BuGy]}~: elle ne d\'epend que du degr\'e~$d_K$ de~$K$
 (voir les notations~\ref{not:C1} et la d\'efinition~\ref{def:C1(K)} de ce texte pour une expression explicite de~$C_1(K)$). 
 On pose alors
 $$
C_2(K) = \exp \left( 12 d_K C_1(K) R_K \right)
$$
et pour tout entier~$n$
$$
C(K , n) =  \left( n^{12h_K} C_2(K) + n^{6h_K} \right)^{2d_K}.
$$
\end{notintro}

\begin{TheoIrr}
On suppose que le corps~$K$ est galoisien sur~$\Q$. 
\begin{enumerate}
\item Soit~$M$ un entier naturel sup\'erieur ou \'egal~$1$.

On note~$\mathcal{E}(K ; M)$ l'ensemble des courbes elliptiques~$E$ d\'efinies sur~$K$ pour lesquelles il existe
\begin{itemize}
\item[$\bullet$] $q$ et~$q'$ des nombres premiers totalement d\'ecompos\'es dans~$K$ et inf\'erieurs ou \'egaux \`a~$M$,
\item[$\bullet$] une place~$\q$ de~$K$ au-dessus de~$q$ et une place~$\q'$ de~$K$ au-dessus de~$q'$ tels que les types de r\'eduction semi-stable de~$E$ en~$\q$ et~$\q'$ sont diff\'erents
\end{itemize}
 (les premiers~$q$ et~$q'$ et les places~$\q$ et~$\q'$ peuvent d\'ependre de~$E$).

Alors pour toute courbe elliptique~$E$ dans~$\mathcal{E}(K ; M)$ et tout nombre premier~$p$ strictement sup\'erieur \`a~$C(K , M)$, la repr\'esentation~$\varphi_{E,p}$ est irr\'eductible.
\item Soit~$q$ un nombre premier rationnel  totalement d\'ecompos\'e dans~$K$.

On note~$\mathcal{E}'(K ; q)$ l'ensemble des courbes elliptiques~$E$ d\'efinies sur~$K$ v\'erifiant~: il existe une place  (pouvant d\'ependre de~$E$) de~$K$ au-dessus de~$q$ en laquelle~$E$ a potentiellement mauvaise r\'eduction multiplicative.

On pose
$$
B(K ; q) = \max \left( C(K,q) ,  \left(  1 + 3^{6 d_K h_K}  \right)^2 \right).
$$
Alors pour toute courbe elliptique~$E$ dans $\mathcal{E}(K ; q)$ et tout nombre premier~$p$ strictement sup\'erieur \`a~$B(K ; q)$, la repr\'esentation $\varphi_{E,p}$ est irr\'eductible.
\end{enumerate}
\end{TheoIrr}

En utilisant le th\'eor\`eme~I, associ\'e \`a une forme effective du th\'eor\`eme de Chebotarev (\cite{[LMO]}), on donne ensuite une formule explicite pour une borne~$C_K$ satisfaisant le th\'eor\`eme~A de~\cite{[Mom]}, dont on rappelle ici l'\'enonc\'e
($A$ d\'esigne une constante absolue qui appara\^it dans le th\'eor\`eme de Chebotarev effectif de~\cite{[LMO]}, voir partie~\ref{ssec:Cheboeff} de ce texte~; voir aussi~\cite{[Dav08]} pour un \'enonc\'e similaire).

\begin{TheoMomEff}
On suppose que le corps~$K$ est galoisien sur~$\Q$.
On pose
$$
C_K = \max \left(  C\left( K , 2(\DK)^{Ah_K}  \right) ,   \left(  1 + 3^{6 d_K h_K}  \right)^2   \right).
$$
On suppose que~$p$ est strictement sup\'erieur \`a~$C_K$ et que la courbe~$E$ poss\`ede une isog\'enie de degr\'e~$p$ d\'efinie sur~$K$.
On note~$\la$ le caract\`ere de~$G_K$ dans~$\Fpx$ donnant l'action de~$G_K$ sur le sous-groupe d'ordre~$p$ de~$E_p$ d\'efinissant l'isog\'enie.
Alors on est dans l'un des deux cas suivants.
\begin{description}
	\item[Type supersingulier]\ 
	\begin{enumerate}
	\item Le nombre premier~$p$ est congru \`a $3$ modulo $4$~;
	\item En toute place de~$K$ au-dessus de~$p$, la courbe~$E$ a mauvaise r\'eduction additive et potentiellement bonne r\'eduction supersinguli\`ere.
	\item La puissance sixi\`eme du caract\`ere~$\la$ est \'egale \`a $\kip^{3 + \frac{p-1}{2}}$.
	\end{enumerate}
	\item[Type ordinaire] Il existe un corps quadratique imaginaire~$L$ satisfaisant les conditions suivantes.
			\begin{enumerate}
				\item Le corps~$K$ contient~$L$ et son corps de classes de Hilbert (en particulier, la norme dans l'extension~$K/L$ de tout id\'eal fractionnaire de~$K$ est un id\'eal fractionnaire principal de~$L$).
				\item Le nombre premier~$p$ est d\'ecompos\'e dans $L$.
				\item Il existe un id\'eal premier~$\pL$ de~$L$ au-dessus de~$p$ tel que le caract\`ere~$\lad$ est non ramifi\'e aux places finies de~$K$ premi\`eres \`a~$\pL$.
				\item Soient~$\q$ un id\'eal premier de~$K$ premier \`a~$\pL$,~$\alq$ dans~$\OL$ un g\'en\'erateur 
				de l'id\'eal~$N_{K/L}(\q)$ et~$\sq$ dans~$G_K$ un rel\`evement  du frobenius du corps r\'esiduel de~$K$ en~$\q$~;
				alors on a~$\lad(\sq)  = \alq^{12} \Mod \pL $.
			\end{enumerate}
\end{description}
\end{TheoMomEff}

\begin{rmqintro}\ 
\begin{enumerate}
\item L'\'enonc\'e initial du th\'eor\`eme~A de~\cite{[Mom]} pr\'esente un troisi\`eme cas possible, dans lequel le caract\`ere~$\lad$ ou le caract\`ere~$\left( \kip \la^{-1} \right)^{12}$ est trivial.
 On l'a ici \'elimin\'e avec les bornes uniformes pour l'ordre des points de torsion d'une courbe elliptique qui figurent dans~\cite{[Mer96]} et~\cite{[Pa]}.
\item La terminologie \emph{type supersingulier ou ordinaire} est propre au pr\'esent article~;
 elle fait r\'ef\'erence au type de r\'eduction semi-stable de la courbe~$E$, non pas aux places de~$K$ au-dessus de~$p$, mais en toute place d'une famille finie ne d\'ependant que de~$K$
 (voir la d\'efinition~\ref{def:JK} et la proposition~\ref{prop:mmredJK}).
 \item D'apr\`es Momose (remarque~8, p.~341 de~\cite{[Mom]}), l'hypoth\`ese de Riemann g\'en\'eralis\'ee entra\^inerait que le cas supersingulier ne se produit pas, pour~$p$ assez grand.
 Larson et Vaintrob obtiennent ainsi, sous cette hypoth\`ese, un analogue du th\'eor\`eme~II (voir~\cite{[LV]}, \S5).
\end{enumerate} 
\end{rmqintro}

Le th\'eor\`eme~II a pour cons\'equence un crit\`ere d'irr\'eductibilit\'e pour l'ensemble des courbes elliptiques semi-stables sur~$K$, lorsque les propri\'et\'es du corps~$K$ emp\^echent le cas ordinaire de survenir.
Par exemple (voir aussi l'appendice~B de~\cite{[Krau]})~:
\begin{corointro}
On suppose que le corps~$K$ est galoisien sur~$\Q$ et ne contient le  corps de classes de Hilbert d'aucun corps quadratique imaginaire. Alors pour toute courbe elliptique~$E$ semi-stable sur~$K$ et tout nombre premier~$p$ strictement sup\'erieur \`a~$C_K$, la repr\'esentation~$\phiEp$ est irr\'eductible. 
\end{corointro}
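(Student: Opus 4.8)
Le plan consiste \`a raisonner par contrapos\'ee \`a partir du th\'eor\`eme~II, dont on r\'eutilise directement la borne~$C_K$. Supposons donc que la repr\'esentation~$\phiEp$ soit r\'eductible pour un nombre premier~$p$ strictement sup\'erieur \`a~$C_K$. La r\'eductibilit\'e de~$\phiEp$ \'equivaut \`a l'existence d'une droite de~$E_p$ stable par~$G_K$, c'est-\`a-dire d'un sous-groupe d'ordre~$p$ de~$E_p$ d\'efini sur~$K$~; ce sous-groupe est le noyau d'une isog\'enie de degr\'e~$p$ d\'efinie sur~$K$, de sorte que~$E$ v\'erifie l'hypoth\`ese du th\'eor\`eme~II. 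On peut alors appliquer ce th\'eor\`eme et en d\'eduire que l'on se trouve dans le cas de type supersingulier ou dans le cas de type ordinaire.

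Il reste \`a exclure ces deux cas gr\^ace aux hypoth\`eses du corollaire. Le cas de type ordinaire fournirait un corps quadratique imaginaire~$L$ tel que~$K$ contienne \`a la fois~$L$ \emph{et son corps de classes de Hilbert} (condition~1 du type ordinaire)~; cela contredit directement l'hypoth\`ese selon laquelle~$K$ ne contient le corps de classes de Hilbert d'aucun corps quadratique imaginaire. Le cas de type ordinaire est donc impossible.

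Pour \'eliminer le cas de type supersingulier, on invoquerait la condition~2~: en toute place de~$K$ au-dessus de~$p$, la courbe~$E$ aurait mauvaise r\'eduction additive. Or l'hypoth\`ese de semi-stabilit\'e de~$E$ sur~$K$ signifie qu'en toute place finie le type de r\'eduction de~$E$ est bon ou multiplicatif, et jamais additif. On aboutit ainsi \`a une contradiction, qui exclut le cas de type supersingulier.

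Les deux cas du th\'eor\`eme~II \'etant impossibles sous les hypoth\`eses du corollaire, l'hypoth\`ese de r\'eductibilit\'e de~$\phiEp$ est absurde~: la repr\'esentation~$\phiEp$ est donc irr\'eductible pour tout~$p$ strictement sup\'erieur \`a~$C_K$. L'essentiel du contenu \'etant d\'ej\`a port\'e par le th\'eor\`eme~II, il n'y a ici aucun obstacle r\'eel~; les seuls points \`a v\'erifier avec soin sont l'\'equivalence entre la r\'eductibilit\'e de~$\phiEp$ et l'existence d'une isog\'enie de degr\'e~$p$ d\'efinie sur~$K$, et la traduction de la semi-stabilit\'e comme absence de r\'eduction additive (notamment aux places au-dessus de~$p$, o\`u intervient la condition~2).
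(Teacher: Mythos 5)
Votre d\'emonstration est correcte et suit exactement l'argument (laiss\'e implicite) du papier~: le cas ordinaire du th\'eor\`eme~II est exclu par l'hypoth\`ese sur les corps de classes de Hilbert, et le cas supersingulier est exclu parce que la semi-stabilit\'e interdit la r\'eduction additive aux places au-dessus de~$p$ exig\'ee par sa condition~2. Rien \`a redire.
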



Enfin, l'\'etude men\'ee pour de la d\'emonstration du th\'eor\`eme~II donne le r\'esultat uniforme suivant pour les homoth\'eties contenues dans l'image de la repr\'esentation~$\phiEp$.
\begin{TheoHomRed}
On se place dans les hypoth\`eses du th\'eor\`eme~II.
\begin{enumerate}
\item Dans le cas supersingulier, l'image de~$\phiEp$ contient les carr\'es des homoth\'eties.
\item Dans le cas ordinaire, l'image de~$\phiEp$ contient les puissances douzi\`emes des homoth\'eties.
\end{enumerate}
\end{TheoHomRed}
Avec les r\'esultats de~\cite{[Hudig]} lorsque la repr\'esentation est irr\'eductible, on obtient l'\'enonc\'e g\'en\'eral suivant.
\begin{TheoHom}
On suppose que le corps~$K$ est galoisien sur~$\Q$. Alors pour toute courbe elliptique~$E$ d\'efinie sur~$K$ et tout nombre premier~$p$ strictement sup\'erieur \`a~$C_K$, on est dans l'un des deux cas suivants~:
\begin{enumerate}
\item l'image de~$\phiEp$ contient les carr\'es des homoth\'eties~;
\item la repr\'esentation~$\phiEp$ est r\'eductible de type ordinaire~; dans ce cas l'image de~$\phiEp$ contient les puissances douzi\`emes des homoth\'eties.
\end{enumerate}
\end{TheoHom}

Dans toute la suite du texte, \`a l'exception de la partie~\ref{sec:critirr}, on suppose que la repr\'esentation~$\phiEp$ est r\'eductible.
La courbe~$E$ poss\`ede alors un sous-groupe d'ordre~$p$ d\'efini sur~$K$.
On fixe un tel sous-groupe~$W$~; il lui est associ\'e une isog\'enie de~$E$ de degr\'e~$p$, d\'efinie sur~$K$.
L'action de~$G_K$ sur~$W(\aQ)$ est donn\'ee par un caract\`ere continu de~$G_K$ dans $\Fpx$~; on le note $\la$ et, suivant la terminologie introduite dans~\cite{[Maz]}, on l'appelle le caract\`ere d'isog\'enie associ\'e au couple~$(E,W)$.
On fixe \'egalement une base de $E_p$ dont le premier vecteur engendre~$W(\aQ)$~; dans cette base la matrice de la repr\'esentation~$\phiEp$ est triangulaire sup\'erieure, de caract\`eres diagonaux~$(\la, \kip\la^{-1})$.


Pour la d\'etermination de la borne~$C_K$, on suit la m\'ethode introduite dans~\cite{[Mom]}.

Dans la partie~\ref{sec:etloc}, on \'etablit les propri\'et\'es locales du caract\`ere d'isog\'enie~:
 \`a l'aide des r\'esultats de~\cite{[SeTa]},~\cite{[Se]} et~\cite{[Ray]}, on d\'etermine sa restriction aux sous-groupes d'inertie de~$G_K$ et l'image par~$\la$ d'un \'el\'ement de Frobenius associ\'e \`a une place hors de~$p$, en fonction du type de r\'eduction semi-stable de la courbe en cette place.

Dans la partie~\ref{sec:compat}, on utilise la th\'eorie du corps de classes (appliqu\'ee \`a l'extension ab\'elienne trivialisant une puissance du caract\`ere d'isog\'enie) pour relier le type de r\'eduction de la courbe en une place hors de~$p$ \`a l'action sur le sous-groupe d'isog\'enie des sous-groupes d'inertie des places au-dessus de~$p$.
La d\'etermination de la borne au-del\`a de laquelle~$p$ doit \^etre pris pour qu'on puisse \'etablir un tel lien fait intervenir des bornes de Bugeaud et Gy{\H{o}}ry (\cite{[BuGy]}) sur la hauteur d'un repr\'esentant d'une classe d'entiers de~$K$ modulo ses unit\'es.

Dans la partie~\ref{sec:critirr}, on utilise les r\'esultats de la partie~\ref{sec:compat} pour d\'emontrer les deux crit\`eres d'irr\'eductibilit\'e qui constituent le th\'eor\`eme~I ci-dessus~; 
la d\'emonstration de ce th\'eor\`eme n\'ecessite \'egalement les bornes uniformes pour l'ordre des points de torsion des courbes elliptiques qui figurent dans~\cite{[Mer96]} et~\cite{[Pa]}.

Dans la partie~\ref{sec:carisohom}, on emploie les r\'esultats de la partie~\ref{sec:critirr}, associ\'es \`a une version effective du th\'eor\`eme de Chebotarev (\cite{[LMO]}), pour \'etablir la forme de la borne~$C_K$ du th\'eor\`eme~II~;
 on v\'erifie ensuite (partie~\ref{ssec:2cariso}) qu'elle permet d'aboutir aux conclusions  du th\'eor\`eme~II.
 Enfin, on fait le lien entre les deux types du th\'eor\`eme~II et l'\'etude locale initiale du caract\`ere d'isog\'enie (partie~\ref{sec:etloc}) pour obtenir le th\'eor\`eme~III.

\section{\'Etude locale du caract\`ere d'isog\'enie}\label{sec:etloc}

\subsection{D\'efaut de semi-stabilit\'e}\label{ssec:defsemstab}

Soient~$\ell$ un nombre premier rationnel et~$\mathfrak{L}$ un id\'eal premier de~$K$ au-dessus de~$\ell$.
\begin{notations}\label{not:dicr}
On fixe~$D_{\mathfrak{L}}$ un sous-groupe de d\'ecomposition pour~$\mathfrak{L}$ dans~$G_K$~; on note~$I_{\mathfrak{L}}$ son sous-groupe d'inertie (deux choix diff\'erents de~$D_{\mathfrak{L}}$ sont conjugu\'es par un \'el\'ement de~$G_K$~; leur image par le caract\`ere ab\'elien~$\la$ co\" incident donc).
On note~$K_{\mathfrak{L}}$ le compl\'et\'e de~$K$ en~$\mathfrak{L}$,~$\overline{K_{\mathfrak{L}}}$ sa cl\^oture alg\'ebrique associ\'ee au sous-groupe~$D_{\mathfrak{L}}$ et  $K^{nr}_{\mathfrak{L}}$ l'extension non ramifi\'ee maximale de~$K_{\mathfrak{L}}$ dans~$\overline{K_{\mathfrak{L}}}$.
On note $k_{\mathfrak{L}}$ le corps r\'esiduel de~$K$ en~$\mathfrak{L}$,~$N\mathfrak{L}$ son cardinal et~$\overline{k_{\mathfrak{L}}}$ sa cl\^oture alg\'ebrique associ\'ee \`a~$\overline{K_{\mathfrak{L}}}$.
 On fixe dans~$D_{\mathfrak{L}}$ un rel\`evement~$\sigma_{\mathfrak{L}}$ du frobenius de~$k_{\mathcal{L}}$ (le sous-groupe~$D_{\mathfrak{L}}$ \'etant fix\'e, deux tels rel\`evements diff\`rent par un \'el\'ement de~$I_{\mathfrak{L}}$~; leurs images par un caract\`ere non ramifi\'e en~$\mathcal{L}$ co\"incident donc).
 
 Enfin, on note~$\Kl$ l'extension de~$K$ trivialisant le caract\`ere~$\la$~; l'extension~$\Kl / K$ est galoisienne, cyclique, de degr\'e divisant~$p-1$. La courbe~$E$ poss\`ede un point d'ordre~$p$ d\'efini sur~$\Kl$.
\end{notations}

\begin{lemme}\label{lem:nonredadd}
On suppose~$\ell$ diff\'erent de~$p$~; alors en toute place de~$\Kl$ au-dessus de~$\mathfrak{L}$,~$E$ n'a pas r\'eduction additive.
\end{lemme}

\begin{proof} 
On renvoie pour la d\'emonstration au~\S 6 de~\cite{[Maz]} ou au lemme~3.4 (\S3.2.2) de~\cite{[Hudig]}.
\end{proof}

\subsubsection{D\'efinition de~$e_{\mathfrak{L}}$ lorsque $j(E)$ n'est pas entier en~$\mathfrak{L}$}\label{sssec:eLmred}

On suppose que l'invariant~$j$ de~$E$ n'est pas entier en la place~$\mathfrak{L}$, c'est-\`a-dire que~$E$ a potentiellement r\'eduction multiplicative en~$\mathfrak{L}$.

Alors il existe une unique extension~$K'_{\mathfrak{L}}$  de~$K_{\mathfrak{L}}$ de degr\'e inf\'erieur ou \'egal \`a~$2$, sur laquelle~$E$ est isomorphe \`a une courbe de Tate~;
cette extension est de degr\'e~$1$ si et seulement si~$E$ a r\'eduction multiplicative d\'eploy\'ee en~$\mathfrak{L}$, de degr\'e~$2$ sinon~;
elle est ramifi\'ee si et seulement si~$E$ a r\'eduction additive en~$\mathfrak{L}$ (voir~\cite{[Sil]} appendice~C th\'eor\`eme~14.1).

On note~$e_{\mathfrak{L}}$ l'indice de ramification de cette extension~; on a donc $e_{\mathfrak{L}}$ \'egal \`a~$1$ si et seulement si~$E$ a r\'eduction multiplicative (d\'eploy\'ee ou non) en~$\mathfrak{L}$ et $e_{\mathfrak{L}}$ \'egal \`a~$2$ si et seulement si~$E$ a r\'eduction additive en~$\mathfrak{L}$.

\subsubsection{D\'efinition de~$e_{\mathfrak{L}}$ lorsque $j(E)$ est entier en~$\mathfrak{L}$}\label{sssec:eLbred}

On suppose que l'invariant~$j$ de~$E$ est entier en la place~$\mathfrak{L}$, c'est-\`a-dire que~$E$ a potentiellement bonne r\'eduction en~$\mathfrak{L}$.

Alors il existe une plus petite extension de~$K^{nr}_{\mathfrak{L}}$ sur laquelle~$E$ a bonne r\'eduction et cette extension est galoisienne (\cite{[SeTa]},~\S 2, corollaire~3,  p.~498).
On note~$M_{\mathfrak{L}}$ cette extension et~$\Phi_{\mathcal{L}}$ le groupe de Galois de~$M_{\mathfrak{L}}$ sur $K^{nr}_{\mathfrak{L}}$.

Lorsque~$\ell$ est diff\'erent de~$p$, on sait de plus (\cite{[SeTa]}, \textit{loc. cit.}) que~$M_{\mathfrak{L}}$ est l'extension de~$K^{nr}_{\mathfrak{L}}$ engendr\'ee par les coordonn\'ees des points de~$p$-torsion de~$E$.
En particulier, le corps~$M_{\mathfrak{L}}$ contient l'extension de~$K^{nr}_{\mathfrak{L}}$ engendr\'ee par les coordonn\'ees des points du sous-groupe d'isog\'enie~$W$~; on note~$M^{\la}_{\mathfrak{L}}$ cette extension.
D'apr\`es le lemme~\ref{lem:nonredadd},~$E$ a bonne r\'eduction sur~$M^{\la}_{\mathfrak{L}}$~; par minimalit\'e de~$M_{\mathfrak{L}}$, le corps~$M_{\mathfrak{L}}$ est donc inclus dans~$M^{\la}_{\mathfrak{L}}$.
Ainsi, les corps~$M_{\mathfrak{L}}$ et~$M^{\la}_{\mathfrak{L}}$ co\"incident et le groupe~$\Phi_{\mathcal{L}}$ s'identifie au sous-groupe d'inertie en~$\mathfrak{L}$ de l'extension ab\'elienne~$\Kl / K$. 
En particulier, le groupe~$\Phi_{\mathfrak{L}}$ est cyclique et son ordre divise~$p-1$.

Par ailleurs, que~$\ell$ soit \'egal ou diff\'erent de~$p$, le groupe~$\Phi_{\mathcal{L}}$ s'identifie \`a un sous-groupe du groupe des automorphismes de la courbe elliptique d\'efinie sur~$\overline{k_{\mathfrak{L}}}$
 qu'on obtient par r\'eduction de~$E \times_K M_{\mathfrak{L}}$ (\cite{[SeTa]},~\S2, d\' emonstration du th\'eor\`eme~2, p.~497).
On note~$\widetilde{E}_{\mathfrak{L}}$ cette courbe elliptique r\'eduite et~$\Aut ( \widetilde{E}_{\mathfrak{L}} ) $  son groupe d'automorphismes.
Le groupe~$\Aut ( \widetilde{E}_{\mathfrak{L}} ) $ d\'epend de l'invariant $j$ de~$\widetilde{E}_{\mathfrak{L}}$, qui est \'egal \`a la classe de~$j(E)$ modulo~$\mathfrak{L}$, de la mani\`ere suivante
 (\cite{[Sil]} appendice~A, proposition~1.2 et exercice~A.1)~:
 \begin{itemize}
 	\item[$\bullet$] si~$j(E)$ est diff\'erent de~$0$ et~$1728$ modulo~$\mathfrak{L}$,~$\Aut ( \widetilde{E}_{\mathfrak{L}} ) $ est cyclique d'ordre~$2$~;
 	\item[$\bullet$] si~$\ell$ est diff\'erent de~$2$ et de~$3$ et~$j(E)$ est congru \`a~$1728$ modulo~$\mathfrak{L}$,~$\Aut ( \widetilde{E}_{\mathfrak{L}} ) $ est cyclique d'ordre $4$~; 
		 \item[$\bullet$] si~$\ell$ est diff\'erent de~$2$ et de~$3$ et~$j(E)$ est congru \`a~$0$ modulo~$\mathfrak{L}$,~$\Aut ( \widetilde{E}_{\mathfrak{L}} ) $ est cyclique d'ordre~$6$~; 
		 	 \item[$\bullet$] si~$\ell$ est \'egal \`a~$3$ et~$j(E)$ est congru \`a~$0 = 1728$ modulo~$\mathfrak{L}$,~$\Aut ( \widetilde{E}_{\mathfrak{L}} ) $ est un groupe d'ordre~$12$, produit semi-direct d'un groupe cyclique 	d'ordre~$3$ par un groupe cyclique d'ordre~$4$ (le deuxi\`eme agissant sur le premier de l'unique mani\`ere non triviale)~; on v\'erifie que les sous-groupes cycliques d'un tel groupe sont d'ordre~$1$,~$2$,~$3$,~$4$ ou~$6$~;
 	\item[$\bullet$] si~$\ell$ est \'egal \`a~$2$ et~$j(E)$ est congru \`a~$0 = 1728$ modulo~$\mathfrak{L}$,~$\Aut ( \widetilde{E}_{\mathfrak{L}} ) $ est un groupe d'ordre~$24$, produit semi-direct du groupe des quaternions (d'ordre~$8$) par un groupe cyclique d'ordre~$3$ (le deuxi\`eme agissant sur le premier  en permutant les g\'en\'erateurs)~; on v\'erifie que les sous-groupes cycliques d'un tel groupe sont \'egalement d'ordre~$1$,~$2$,~$3$,~$4$ ou~$6$.
\end{itemize}

On d\'eduit de ce qui pr\'ec\`ede que pour tout~$\ell$, le groupe~$\Phi_{\mathfrak{L}}$ est cyclique d'ordre~$1$,~$2$,~$3$,~$4$ ou~$6$.
On note~$e_{\mathfrak{L}}$ l'ordre de~$\Phi_{\mathcal{L}}$~; il est donc dans l'ensemble~$\left\{1, 2 , 3 , 4, 6 \right\}$ et v\'erifie~:
\begin{itemize}
\item[$\bullet$] $e_{\mathfrak{L}}$ est \'egal \`a~$1$ si et seulement si~$E$ a bonne r\'eduction en~$\mathfrak{L}$~;
\item[$\bullet$] si~$e_{\mathfrak{L}}$ est \'egal \`a~$4$, alors~$j(E)$ est congru \`a $ 1728$ modulo $\mathfrak{L}$~;
\item[$\bullet$] si~$e_{\mathfrak{L}}$ est \'egal \`a~$3$ ou~$6$, alors~$j(E)$ est congru \`a $0$ modulo $\mathfrak{L}$~;
\item[$\bullet$] si~$\ell$ est diff\'erent de~$p$, alors $e_{\mathfrak{L}}$ est l'ordre de~$\la(I_{\mathfrak{L}})$ et divise~$p-1$.
\end{itemize}

\subsection{Action des sous-groupes d'inertie des places au-dessus de $p$}\label{ssec:inenp}

Soit~$\p$ un id\'eal premier de~$K$ situ\'e au-dessus de~$p$~; on reprend les notations~\ref{not:dicr}. Cette partie pr\'ecise la proposition~3.2 (\S~3.1) de~\cite{[Hudig]} qui d\'ecrit la restriction de la puissance douzi\`eme du caract\`ere d'isog\'enie \`a un sous-groupe d'inertie~$\Ip$ associ\'e \`a~$\p$.

 \begin{proposition}\label{prop:inpmult}
On suppose que~$E$ a potentiellement r\'eduction multiplicative en~$\p$. Alors
\begin{enumerate}
\item le caract\`ere~$\la^{\ep}$ restreint \`a~$\Ip$ est trivial ou \'egal \`a~$\kip^{\ep}$~;
\item en particulier $\la^{2}$ restreint \`a~$\Ip$ est trivial ou \'egal \`a~$\kip^{2}$.
\end{enumerate}
Lorsque~$p$ est sup\'erieur ou \'egal \`a~$17$, il existe un unique entier~$\ap$ valant~$0$ ou~$12$ tel que le caract\`ere~$\la^{12}$ restreint \`a~$\Ip$ co\"incide avec~$\kip^{\ap}$.
\end{proposition}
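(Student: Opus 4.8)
Le plan est d'exploiter la structure locale de la $p$-torsion en~$\p$ via la th\'eorie de la courbe de Tate. Comme~$E$ a potentiellement r\'eduction multiplicative en~$\p$, elle devient, sur l'extension~$K'_{\p}$ de~$\Kp$ de degr\'e~$\ep$ introduite au~\S\ref{sssec:eLmred}, isomorphe \`a une courbe de Tate~$E_q$ (\cite{[Sil]}, appendice~C)~; cette extension est non ramifi\'ee si~$\ep=1$ et ramifi\'ee quadratique si~$\ep=2$. De mani\`ere \'equivalente, la restriction de~$\phiEp$ \`a~$D_{\p}$ s'obtient en tordant par un caract\`ere quadratique~$\varepsilon$ la repr\'esentation attach\'ee \`a une courbe de Tate sur~$\Kp$, le caract\`ere~$\varepsilon$ \'etant non ramifi\'e lorsque~$\ep=1$ et ramifi\'e lorsque~$\ep=2$. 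La suite exacte~$0 \to \mu_p \to E_q[p] \to \Z/p\Z \to 0$ montre que la repr\'esentation attach\'ee \`a~$E_q$ est, dans une base convenable, triangulaire sup\'erieure de caract\`eres diagonaux~$(\kip, 1)$~; apr\`es torsion, les caract\`eres diagonaux de~$\phiEp|_{D_{\p}}$ sont donc~$\varepsilon\kip$ et~$\varepsilon$.

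D'abord, j'identifierais~$\la|_{D_{\p}}$ \`a l'un de ces deux caract\`eres. La droite~$W \otimes \Kp$ \'etant stable sous~$D_{\p}$ et les deux caract\`eres diagonaux~$\varepsilon\kip$ et~$\varepsilon$ \'etant distincts (car~$\kip$ n'est pas trivial sur~$D_{\p}$), le caract\`ere par lequel~$D_{\p}$ agit sur cette droite est n\'ecessairement l'un d'eux~: ainsi~$\la|_{D_{\p}} \in \{ \varepsilon\kip , \varepsilon \}$.

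Ensuite je restreindrais \`a l'inertie~$\Ip$ et j'\'el\`everais \`a la puissance~$\ep$. Dans les deux cas, le caract\`ere~$\varepsilon^{\ep}$ est trivial sur~$\Ip$ (car~$\varepsilon$ est non ramifi\'e si~$\ep=1$, et quadratique donc~$\varepsilon^2=1$ si~$\ep=2$). On en d\'eduit~$\la^{\ep}|_{\Ip} \in \{ \kip^{\ep} , 1 \}$, ce qui est l'assertion~(1). L'assertion~(2) suit aussit\^ot~: pour~$\ep=1$ on a~$\la|_{\Ip} \in \{\kip, 1\}$, donc~$\la^2|_{\Ip} \in \{\kip^2, 1\}$~; pour~$\ep=2$ c'est exactement~(1).

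Enfin, pour l'\'enonc\'e final, j'\'el\`everais~(2) \`a la puissance sixi\`eme~: $\la^{12}|_{\Ip} = \left( \la^2|_{\Ip} \right)^6 \in \{ \kip^{12}, 1 \}$, d'o\`u l'existence de~$\ap \in \{0, 12\}$ tel que~$\la^{12}|_{\Ip} = \kip^{\ap}$. La principale difficult\'e est l'unicit\'e de~$\ap$, qui \'equivaut \`a~$\kip^{12}|_{\Ip} \neq 1$. Comme~$p$ est non ramifi\'e dans~$K$, l'extension~$\Kp/\Qp$ est non ramifi\'ee, de sorte que~$\kip|_{\Ip}$ s'identifie au caract\`ere cyclotomique de l'inertie de~$\Qp$, d'image~$\Fpx$ et d'ordre~$p-1$. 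D\`es lors~$\kip^{12}|_{\Ip}$ est trivial si et seulement si~$p-1$ divise~$12$, soit~$p \in \{2,3,5,7,13\}$~; l'hypoth\`ese~$p \geq 17$ exclut ce cas et assure l'unicit\'e de~$\ap$.
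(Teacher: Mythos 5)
Your proof is correct and follows essentially the same route as the paper: both rest on the Tate-curve description of~$\phiEp$ restricted to a decomposition group at~$\p$, with the degree-$\ep$ (quadratic twist) ambiguity eliminated by raising to the power~$\ep$ on inertia. The only differences are that you re-derive from the exact sequence $0 \to \mu_p \to E_q[p] \to \Z/p\Z \to 0$ what the paper simply cites from Serre (namely that~$\la$ on the inertia subgroup of the Tate-curve field is trivial or cyclotomic), and that you spell out the existence and uniqueness of~$\ap$ for $p \geq 17$, which the paper's proof leaves implicit.
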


\begin{proof}

Soit~$\Kp'$ l'unique extension de~$\Kp$ de degr\'e inf\'erieur ou \'egal \`a~$2$ sur laquelle~$E$ est isomorphe \`a une courbe de Tate (voir partie~\ref{sssec:eLmred})~; 
 on note~$\Ip'$ le sous-groupe d'inertie de~$D_{\p}$ associ\'e \`a~$\Kp'$.
 
 D'apr\`es~\cite{[Se]} (proposition~13 de \S1.12 et page~273 de \S1.11), le caract\`ere~$\la$ restreint au sous-groupe~$\Ip'$ est soit trivial soit \'egal au caract\`ere cyclotomique~$\kip$. Par d\'efinition de~$\ep$ (partie~\ref{sssec:eLmred}),~$\Ip'$ est un sous-groupe d'indice~$\ep$ de~$\Ip$~; on en d\'eduit le premier point de la proposition~; le second d\'ecoule du fait que~$\ep$ est \'egal \`a~$1$ ou~$2$.
\end{proof}

\begin{proposition}\label{prop:inpbon}
On suppose que~$E$ a potentiellement bonne r\'eduction en~$\p$. Alors il existe un entier~$\rp$ compris entre~$0$ et~$\ep$
tel que le caract\`ere~$\la^{\ep}$ restreint au sous-groupe d'inertie~$\Ip$ co\"incide avec~$\kip^{r_{\p}}$
(les couples~$(\ep , \rp)$ possibles, ainsi que des informations suppl\'ementaires pour certains cas, sont rassembl\'es dans le tableau suivant).
En particulier, lorsque~$p$ est sup\'erieur ou \'egal \`a~$17$,  il existe un unique entier~$\ap$ dans l'ensemble~$\{0 , 4 , 6 , 8 , 12\}$ tel que le caract\`ere~$\la^{12}$ restreint \`a~$\Ip$ co\"incide avec~$\kip^{\ap}$.
\renewcommand\arraystretch{1.1}
$$
\begin{array}{| c | c | c | c | c | c |}
\hline
\multirow{2}{*}{$\ep$} & \multirow{2}{*}{$\rp$} & \multirow{2}{*}{$\ap = \displaystyle{\frac{12}{\ep}\rp}$} & \multirow{2}{*}{$p$} & \multirow{2}{*}{$j(E)$} &  \text{Type de r\'eduction} \\
 &  &  &  &  &  \text{semi-stable en } \p \\
\hline
\multirow{2}{*}{$1$} & 0 & 0 & \multirow{2}{*}{$-$}  & \multirow{2}{*}{$-$} & \multirow{2}{*}{$-$} \\  \cline{2-3} 
 & 1 & 12 &  &  &  \\ \hline \hline
 \multirow{2}{*}{$2$} & 0 & 0 &  \multirow{2}{*}{$-$} &  \multirow{2}{*}{$-$}  & \multirow{2}{*}{$-$} \\  \cline{2-3} 
  & 2 & 12 &  & &  \\ \hline \hline
 \multirow{4}{*}{$3$} & 0 & 0 & p \equiv 1 \Mod 3 & \multirow{4}{*}{$j(E) \equiv 0 \Mod \p$} & \text{ordinaire} \\  \cline{2-4} \cline{6-6} 
 & 1 & 4 & \multirow{2}{*}{$p \equiv 2 \Mod 3 $} &   & \multirow{2}{*}{supersingulier} \\  \cline{2-3}   
 & 2 & 8 &  &  & \\ \cline{2-4} \cline{6-6}  
 & 3 & 12 & p \equiv 1 \Mod 3 & & \text{ordinaire} \\ \hline \hline
 \multirow{3}{*}{$4$} & 0 & 0 & p \equiv 1 \Mod 4 &  \multirow{3}{*}{$j(E) \equiv 1728 \Mod \p$} & \text{ordinaire} \\ \cline{2-4} \cline{6-6}  
 & 2 & 6 & p \equiv 3 \Mod 4  &   & \text{supersingulier} \\ \cline{2-4} \cline{6-6}  
 & 4 & 12 & p \equiv 1 \Mod 4 &  & \text{ordinaire} \\ \hline \hline
 \multirow{4}{*}{$6$} & 0 & 0 & p \equiv 1 \Mod 3 & \multirow{4}{*}{$j(E) \equiv 0 \Mod \p$} & \text{ordinaire} \\ \cline{2-4} \cline{6-6}  
  & 2 & 4 & \multirow{2}{*}{$p \equiv 2 \Mod 3 $}&  & \multirow{2}{*}{supersingulier} \\ \cline{2-3} 
  & 4 & 8 &  &  &  \\ \cline{2-4} \cline{6-6} 
 & 6 & 12 & p \equiv 1 \Mod 3 &  & \text{ordinaire} \\ \hline 
\end{array}
$$

\end{proposition}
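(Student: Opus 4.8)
Le plan est de ramener le calcul de~$\la$ sur~$\Ip$ \`a celui d'une puissance du caract\`ere cyclotomique, puis de d\'eterminer l'exposant gr\^ace \`a la bonne r\'eduction acquise sur~$M_{\p}$. Je commencerais par \'etablir que~$\la$ restreint \`a~$\Ip$ se factorise par l'inertie mod\'er\'ee et co\"incide avec une puissance de~$\kip$. En effet, comme~$E$ acquiert bonne r\'eduction sur~$M_{\p}$ et que~$\ep \le 6 < p-1$, le th\'eor\`eme de Raynaud (\cite{[Ray]}) assure que l'action de l'inertie de~$M_{\p}$ sur~$E_p$ est mod\'er\'ee~; l'extension~$M_{\p}/\Kp$ \'etant elle-m\^eme mod\'er\'ement ramifi\'ee (son indice~$\ep$ est premier \`a~$p$), l'action de~$\Ip$ tout entier est mod\'er\'ee. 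Or~$\kip$ restreint \`a~$\Ip$ est d'ordre~$p-1$ (car~$p$ est non ramifi\'e dans~$K$, donc~$\mu_p$ totalement ramifi\'e), et les caract\`eres \`a valeurs dans~$\Fpx$ du quotient mod\'er\'e, procyclique, de~$\Ip$ forment un groupe cyclique d'ordre~$p-1$~; ils sont donc tous des puissances de~$\kip$. Il existe ainsi un entier~$m$ tel que~$\la$ restreint \`a~$\Ip$ vaille~$\kip^{m}$, et la proposition se ram\`ene \`a d\'eterminer~$\ep\, m$ modulo~$p-1$.

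Je restreindrais ensuite \`a l'inertie de~$M_{\p}$, d'indice~$\ep$ dans~$\Ip$. Lorsque la courbe r\'eduite~$\widetilde{E}_{\p}$ est ordinaire, la suite connexe-\'etale de~$E_p$ sur l'anneau des entiers de~$M_{\p}$ (th\'eorie de Serre-Tate,~\cite{[SeTa]}) montre que~$W$ y est soit de type multiplicatif, soit \'etale~: dans le premier cas~$\la$ restreint \`a l'inertie de~$M_{\p}$ vaut~$\kip$, dans le second il est trivial. En comparant les deux membres sur un g\'en\'erateur topologique de l'inertie mod\'er\'ee, ces cas donnent respectivement~$\ep\, m \equiv \ep$ et~$\ep\, m \equiv 0$ modulo~$p-1$, soit~$\rp = \ep$ (d'o\`u~$\ap = 12$) et~$\rp = 0$ (d'o\`u~$\ap = 0$). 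On obtient exactement les valeurs extr\^emes de la table, associ\'ees au type ordinaire.

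Le point d\'elicat sera le cas supersingulier, o\`u~$E_p$ restreint \`a l'inertie de~$M_{\p}$ est d\'ecrit par les caract\`eres fondamentaux de niveau~$2$ (\cite{[Se]},~\S1.11--1.13). La droite~$W$ \'etant stable sous~$\Ip$, l'action de~$\Phi_{\p}$, identifi\'ee via Serre-Tate \`a un sous-groupe de~$\Aut(\widetilde{E}_{\p})$ (partie~\ref{sssec:eLbred}) et donc donn\'ee par des racines de l'unit\'e d'ordre divisant~$\ep$, doit stabiliser~$W$~; le contr\^ole de ce d\'evissage de niveau~$2$ sous~$\Phi_{\p}$ --- qui constitue l'obstacle principal --- impose \`a~$\ep\, m$ une valeur interm\'ediaire. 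La relation~$\la \cdot (\kip\la^{-1}) = \kip$ (le d\'eterminant de~$\phiEp$) fournit alors, pour les exposants de~$W$ et du quotient~$E_p/W$, l'identit\'e~$\rp + r'_{\p} = \ep$, qui force l'\'equilibre~: pour~$\ep = 4$ (cas~$j(E) \equiv 1728$) on trouve~$\rp = 2$, donc~$\ap = 6$~; pour~$\ep \in \{3,6\}$ (cas~$j(E) \equiv 0$) on trouve~$\{\ap,\, 12-\ap\} = \{4,8\}$. Les congruences~$p \equiv 3 \pmod 4$ et~$p \equiv 2 \pmod 3$ apparaissent ici pr\'ecis\'ement comme la condition pour que les racines de l'unit\'e (d'ordre~$4$, resp.~$3$) engendrant~$\Aut(\widetilde{E}_{\p})$ ne soient pas dans~$\Fp$ mais dans~$\mathbb{F}_{p^2}$, signature de la r\'eduction supersinguli\`ere~; elles remplissent les lignes m\'edianes de la table.

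Enfin, pour la derni\`ere assertion, j'\'el\`everais \`a la puissance~$12$~: comme~$12/\ep$ est entier, $\la^{12}$ restreint \`a~$\Ip$ co\"incide avec~$\kip^{\ap}$ o\`u~$\ap = \frac{12}{\ep}\rp \in \{0,4,6,8,12\}$. L'unicit\'e de~$\ap$ lorsque~$p \ge 17$ r\'esulte de ce que~$\kip$ restreint \`a~$\Ip$ est d'ordre~$p-1 \ge 16 > 12$~: deux exposants distincts de l'intervalle~$[0,12]$ y d\'efinissent des caract\`eres distincts.
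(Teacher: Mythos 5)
Votre mise en place (le caract\`ere~$\la$ restreint \`a~$\Ip$ est mod\'er\'e, donc une puissance~$\kip^{m}$ de~$\kip$) et votre traitement du cas ordinaire (suite connexe-\'etale sur~$\mathcal{O}_{M_{\p}}$, donnant~$\rp \in \{0 , \ep\}$) sont corrects, et correspondent en substance \`a ce que l'article importe de~\cite{[Hudig]}. (Petite r\'eserve~: l'action de l'inertie de~$M_{\p}$ sur~$E_p$ tout entier n'est pas mod\'er\'ee en g\'en\'eral --- la classe d'extension peut \^etre sauvagement ramifi\'ee --- mais c'est sans cons\'equence ici~: le caract\`ere~$\la$, \`a valeurs dans le groupe~$\Fpx$ d'ordre premier \`a~$p$, tue automatiquement l'inertie sauvage, sans recours \`a~\cite{[Ray]}.) En revanche, le cas supersingulier, que vous qualifiez vous-m\^eme d'obstacle principal, n'est pas d\'emontr\'e~: vous affirmez que le d\'evissage de niveau~$2$ \og impose une valeur interm\'ediaire \fg\ \`a~$\ep m$, puis que la relation de d\'eterminant~$\rp + r'_{\p} = \ep$ force les valeurs de la table. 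Elle ne les force pas~: cette relation est sym\'etrique et autorise~$(\rp , r'_{\p}) = (1,3)$ ou~$(3,1)$ pour~$\ep = 4$, et~$(1,5)$,~$(3,3)$,~$(5,1)$ pour~$\ep = 6$, alors que la table exclut toutes les valeurs impaires. L'ingr\'edient manquant est la divisibilit\'e~$\gcd(\ep , p-1) \mid \rp$, cons\'equence imm\'ediate de la congruence~$\ep m \equiv \rp \bmod\ (p-1)$ --- que vous avez \'etablie mais n'utilisez jamais --- et de la parit\'e de~$p-1$~; c'est pr\'ecis\'ement par cette relation (\'ecrite~$\ep a'_{\p} \equiv \rp \bmod\ p-1$) que l'article obtient sa colonne de congruences.

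Par ailleurs, l'appel aux caract\`eres fondamentaux de niveau~$2$ va \`a l'encontre du but recherch\'e~: si l'action de l'inertie de~$M_{\p}$ sur~$E_p$ \'etait donn\'ee par~$\theta^{\ep} \oplus \theta^{p\ep}$ avec~$\theta$ de niveau~$2$, aucun de ces deux caract\`eres ne serait \`a valeurs dans~$\Fpx$ (il faudrait~$(p+1) \mid \ep$, exclu puisque~$\ep \leq 6 < p+1$ d\`es que~$p \geq 7$)~; la droite~$W$ n'existerait donc pas et le cas supersingulier r\'eductible serait vide. L'outil correct --- celui de~\cite{[Mom]} et~\cite{[Hudig]}, via~\cite{[Ray]} --- est la classification des sch\'emas en groupes finis et plats d'ordre~$p$ sur~$\mathcal{O}_{M_{\p}}$~: l'adh\'erence sch\'ematique de~$W$ donne~$\la$ restreint \`a l'inertie de~$M_{\p}$ \'egal \`a~$\theta_1^{n}$, o\`u~$\theta_1$ est le caract\`ere fondamental de niveau~$1$ et~$0 \leq n \leq \ep$ (d'o\`u l'existence de~$\rp$ dans~$[0 , \ep]$)~; la supersingularit\'e exclut~$n = 0$ et~$n = \ep$, car~$E_p$ et son dual de Cartier sont connexes, donc sans sous-groupe ni quotient \'etale~; combin\'e \`a la divisibilit\'e ci-dessus, cela donne exactement~$\rp = 2$ et~$p \equiv 3 \bmod 4$ pour~$\ep = 4$,~$\rp \in \{2,4\}$ pour~$\ep = 6$ et~$\rp \in \{1,2\}$ pour~$\ep = 3$ (avec~$p \equiv 2 \bmod 3$). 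Enfin, les colonnes de congruences et de type de r\'eduction pour les lignes \emph{ordinaires} exigent le calcul de l'invariant de Hasse des courbes d'invariant~$j$ \'egal \`a~$0$ ou~$1728$ (ordinaires si et seulement si~$p \equiv 1 \bmod 3$, resp.~$p \equiv 1 \bmod 4$), calcul que votre esquisse \'evoque pour le cas supersingulier (\og racines de l'unit\'e hors de~$\Fp$ \fg) mais n'effectue pas, et qu'elle n'aborde pas du tout pour les lignes ordinaires.
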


\begin{proof}
Pour l'existence et les valeurs possibles de~$\rp$ et~$\ap$ et les trois premi\`eres colonnes du tableau, on renvoie \`a la d\'emonstration de la proposition~3.2 (\S~3.1) de~\cite{[Hudig]} (voir aussi la remarque~1 de la partie~2 de~\cite{[Mom]}).

Pour la quatri\`eme colonne, la d\'emonstration de la proposition~3.2 de~\cite{[Hudig]} donne qu'il existe un entier~$\ap'$ satisfaisant la congruence~$\ep\ap' \equiv \rp \mod p-1$.
Alors~:
\begin{itemize}
\item[$\bullet$] si~$3$ divise~$\ep$ et~$\rp$ (couples~$(3,0)$,~$(3,3)$,~$(6,0)$ et~$(6,6)$), alors~$3$ divise~$p-1$, donc~$p$ est congru \`a~$1$ modulo~$3$~;
\item[$\bullet$] si~$3$ divise~$\ep$ et ne divise pas~$\rp$ (couples~$(3,1)$,~$(3,2)$,~$(6,2)$ et~$(6,4)$), alors~$3$ ne divise pas~$p-1$, donc~$p$ est congru \`a~$2$ modulo~$3$~;
\item[$\bullet$] si~$\ep$ est \'egal \`a~$4$ et divise~$\rp$ (couples~$(4,0)$ et~$(4,4)$), alors~$4$ divise~$p-1$ donc~$p$ est congru \`a~$1$ modulo~$4$~;
\item[$\bullet$] si~$\ep$ est \'egal \`a~$4$ et ne divise pas~$\rp$ (couple~$(4,2)$), alors~$4$ ne divise pas~$p-1$ donc~$p$ est congru \`a~$3$ modulo~$4$.
\end{itemize}

Pour la cinqui\`eme colonne, la discussion de la partie~\ref{sssec:eLbred} donne~:
\begin{itemize}
\item[$\bullet$] si~$\ep$ est \'egal \`a~$4$, alors~$j(E)$ est congru \`a~$1728$ modulo~$\p$~;
\item[$\bullet$] si~$3$ divise~$\ep$, alors~$j(E)$ est congru \`a~$0$ modulo~$\p$.
\end{itemize}

Enfin, la derni\`ere colonne r\'esulte de la d\'etermination de l'invariant de Hasse des courbes d'invariant~$j$ \'egal~$0$ ou~$1728$ sur un corps fini de caract\'eristique~$p$ (sup\'erieur ou \'egal \`a~$5$~;
voir~\cite{[Sil]},~\S V.4, exemples~4.4 et~4.5).
En effet, soit~$k$ un corps fini de caract\'eristique~$p$~; alors~:
\begin{itemize}
\item[$\bullet$] la courbe elliptique d\'efinie sur~$k$ par l'\'equation~$y^2 = x^3 + 1$, d'invariant~$j$ \'egal \`a~$0$, est ordinaire si et seulement si~$p$ est congru \`a~$1$ modulo~$3$~;
\item[$\bullet$] la courbe elliptique d\'efinie sur~$k$ par l'\'equation~$y^2 = x^3 + x$, d'invariant~$j$ \'egal \`a~$1728$, est ordinaire si et seulement si~$p$ est congru \`a~$1$ modulo~$4$.
\end{itemize}
\end{proof}

\subsection{Ramification et action du frobenius aux places hors de~$p$}\label{ssec:ramfrobhorsp}

Soient~$q$ un nombre premier rationnel diff\'erent de~$p$ et~$\q$ un id\'eal premier de~$K$ au-dessus de~$q$.

\subsubsection{Lorsque~$j(E)$ n'est pas entier en~$\q$}
\begin{proposition}\label{prop:ramfrobhorspmult}
On suppose que~$E$ a potentiellement r\'eduction multiplicative en~$\q$.
Alors~:
\begin{enumerate}
\item le groupe~$\la(\Iq)$ est d'ordre~$\eq$~; en particulier, le caract\`ere~$\lad$ est non ramifi\'e en~$\q$~;
\item $\la^2(\sq)$ vaut $1$ ou $(N\q)^2$ modulo $p$.
\end{enumerate}
\end{proposition}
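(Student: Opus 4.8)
The plan is to follow the pattern of the proof of Proposition~\ref{prop:inpmult}, while exploiting the fact that, since~$q$ is different from~$p$, the cyclotomic character~$\kip$ is \emph{unramified} at~$\q$ (adjoining~$p$-th roots of unity to a local field of residual characteristic distinct from~$p$ gives an unramified extension). Concretely, I would keep the notation of~\ref{sssec:eLmred}: let~$\Kq'$ be the extension of~$\Kq$ of degree at most~$2$ over which~$E$ becomes isomorphic to a Tate curve, and let~$\Iq'$ be the associated inertia subgroup of~$\Dq$, so that~$\Iq'$ has index~$\eq$ in~$\Iq$ by definition of~$\eq$.

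First I would use Tate's description of curves with potentially multiplicative reduction (\cite{[Sil]}, appendice~C, th\'eor\`eme~14.1), already invoked in~\ref{sssec:eLmred}: over~$\Kq$, the curve~$E$ is the twist by a character~$\psi$ of order at most~$2$ of a Tate curve~$E_{q'}$ defined over~$\Kq$, the character~$\psi$ being ramified exactly when~$E$ has additive reduction at~$\q$ (that is,~$\eq=2$) and unramified otherwise (that is,~$\eq=1$). On the~$p$-torsion of~$E_{q'}$, the decomposition group~$\Dq$ acts through the cyclotomic character~$\kip$ on the canonical subgroup~$\mu_p$ and trivially on the quotient (as already used over~$\Kq'$ in Proposition~\ref{prop:inpmult}, following \cite{[Se]}, \S1.11--1.12). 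Since twisting by~$\psi$ does not change the lattice of stable lines, the isogeny character~$\la$ restricted to~$\Dq$ must coincide with one of the two characters~$\kip\,\psi$ or~$\psi$.

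Both assertions then follow from this dichotomy. On inertia we have~$\kip\mid_{\Iq}$ trivial (as~$q\neq p$), so~$\la\mid_{\Iq}=\psi\mid_{\Iq}$ in either case; hence~$\la(\Iq)$ has the same order as~$\psi(\Iq)$, namely~$2$ when~$\psi$ is ramified and~$1$ otherwise, that is, exactly~$\eq$. Because~$\eq$ lies in~$\{1,2\}$ and divides~$12$, this gives the first point together with the non-ramification of~$\lad$ at~$\q$. For the second point, since~$\psi^2$ is trivial we get~$\la^2=\kip^2$ or~$\la^2=1$ on all of~$\Dq$; evaluating at the Frobenius lift~$\sq$ and using~$\kip(\sq)=N\q \Mod p$ yields~$\la^2(\sq)\equiv (N\q)^2$ or~$1$ modulo~$p$. (Note that squaring conveniently annihilates the possibly ramified factor~$\psi$, which is why the clean statement is about~$\la^2(\sq)$ rather than~$\la(\sq)$.)

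The delicate step is the identification of~$\la\mid_{\Dq}$ with~$\kip\,\psi$ or~$\psi$, and in particular the verification that in the additive case the stable line is genuinely ramified, so that~$\la(\Iq)$ has order \emph{exactly}~$\eq$ and not merely a divisor of~$\eq$. This is where the twist by the ramified quadratic character~$\psi$ is essential: both diagonal characters~$\kip\,\psi$ and~$\psi$ restrict to the ramified character~$\psi$ on~$\Iq$ when~$\psi$ is ramified, which forces any~$\Dq$-stable line to be ramified and thus rules out the possibility~$\la(\Iq)=1$ when~$\eq=2$.
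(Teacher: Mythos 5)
Your proof is correct, but it takes a genuinely different route from the paper's. The paper's own proof has two steps: it first cites Proposition~3.3 of~\cite{[Hudig]} to obtain the second assertion together with the bound that $\la(\Iq)$ has order at most~$2$, and then establishes the exact equality $|\la(\Iq)|=\eq$ by contradiction --- if one had $\eq=2$ with $\la(\Iq)$ trivial, the extension~$\Kl/K$ would be unramified at~$\q$, so the additive reduction of~$E$ would persist at every place of~$\Kl$ above~$\q$ (\cite{[Sil]}, \S VII.5, proposition~5.4), contradicting Lemma~\ref{lem:nonredadd}, whose proof rests on the existence of a point of order~$p$ on~$E$ rational over~$\Kl$, hence on a non-local input. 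Your argument instead stays entirely local at~$\q$: writing~$E$ over~$\Kq$ as the quadratic twist by~$\psi$ of a Tate curve identifies the two possible characters of~$\Dq$ on a stable line as~$\kip\psi$ and~$\psi$, and the key observation that~$\kip$ is unramified at~$\q$ (because $q\neq p$) makes both candidates restrict to~$\psi$ on~$\Iq$; since~$\psi$ is ramified exactly when $\eq=2$, the equality $|\la(\Iq)|=\eq$ follows at once, and squaring kills~$\psi$ to give $\la^2(\sq)\in\bigl\{1,(N\q)^2\bigr\}$ modulo~$p$. What your approach buys is self-containedness: both assertions, including the exactness of the order (the point the paper has to work for), come out of a single local computation, with no appeal to Lemma~\ref{lem:nonredadd} nor to~\cite{[Hudig]}, and it makes transparent why only~$\la^2(\sq)$, not~$\la(\sq)$, admits a clean statement. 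What the paper's approach buys is brevity given the references at its disposal, and economy of means: it recycles Lemma~\ref{lem:nonredadd}, which it needs anyway in part~\ref{sssec:eLbred}. Note finally that your twist argument is the natural sharpening, away from~$p$, of the proof of Proposition~\ref{prop:inpmult}, where the same dichotomy (trivial or cyclotomic over the field~$\Kq'$) appears but without the extra gain coming from the unramifiedness of~$\kip$.
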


\begin{proof}
La proposition~3.3 (\S 3.2.1) de~\cite{[Hudig]} donne la deuxi\`eme assertion et que~$\la(\Iq)$ est d'ordre au plus~$2$.
Comme~$\eq$ est \'egal \`a~$1$ ou~$2$ (voir partie~\ref{sssec:eLmred}), il ne reste qu'\`a prouver qu'on ne peut avoir \`a la fois~$\eq$ \'egal \`a~$2$ et~$\la(\Iq)$ trivial.

 Supposons par l'absurde que c'est le cas.
 Alors, avec les notations de la partie~\ref{ssec:defsemstab},~$E$ a mauvaise r\'eduction additive sur~$\Kq$ et l'extension~$\Kl / K$ est non ramifi\'ee en~$\q$.
  Ceci implique qu'en toute place de~$\Kl$ au-dessus de~$\q$,~$E$ a mauvaise r\'eduction additive (\cite{[Sil]} \S VII.5 proposition~5.4).
  On obtient alors une contradiction avec le lemme~\ref{lem:nonredadd}.
\end{proof}

\subsubsection{Lorsque~$j(E)$ est entier en~$\q$}

On suppose dans cette partie que l'invariant~$j$ de~$E$ est entier en~$\q$, c'est-\`a-dire que~$E$ a potentiellement bonne r\'eduction en~$\q$.

La proposition suivante r\'esulte de la discussion de la partie~\ref{sssec:eLbred}.
\begin{proposition}\label{prop:ramhorspbon}
On suppose que~$E$ a potentiellement bonne r\'eduction en~$\q$.
 Alors le groupe~$\la(\Iq)$ est d'ordre~$\eq$~; en particulier, le caract\`ere~$\la^{12}$ est non ramifi\'e en~$\q$.
\end{proposition}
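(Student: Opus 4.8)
The plan is to read the statement off directly from the analysis already carried out in~\ref{sssec:eLbred}, specialised to the prime~$\q$ (so that $\mathfrak{L} = \q$ and $\ell = q$). The entire content is contained in the last item of the list closing that subsubsection, which asserts that when $\ell \neq p$ the integer~$e_{\mathfrak{L}}$ is \emph{exactly} the order of~$\la(I_{\mathfrak{L}})$. Since the standing hypothesis of the present subsection is that $q$ is different from~$p$, and since $E$ has potentially good reduction at~$\q$ (that is, $j(E)$ is integral at~$\q$), that discussion applies verbatim and yields at once that $\la(\Iq)$ has order~$\eq$, which is the first assertion.

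For the reader's convenience I would recall the mechanism that justifies this citation. The group $\Phi_{\q} = \Gal(M^{\la}_{\q}/K^{nr}_{\q})$ was identified in~\ref{sssec:eLbred} with the inertia subgroup at~$\q$ of the abelian extension $\Kl/K$; through the isomorphism $\Gal(\Kl/K) \cong \la(G_K)$ induced by~$\la$, this inertia subgroup is precisely the image~$\la(\Iq)$. Hence $\eq = |\Phi_{\q}| = |\la(\Iq)|$. The only nontrivial input here is the equality $M_{\q} = M^{\la}_{\q}$, obtained in~\ref{sssec:eLbred} via Lemma~\ref{lem:nonredadd}: because $q \neq p$ the field $M_{\q}$ is described (Serre--Tate) as the extension of $K^{nr}_{\q}$ generated by the $p$-torsion, and $E$ already acquires good reduction over the field generated by the points of the isogeny subgroup~$W$, so the minimality of $M_{\q}$ forces the two fields to coincide.

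For the \og en particulier \fg\ clause I would simply invoke that $\eq$ lies in $\{1,2,3,4,6\}$ (again from~\ref{sssec:eLbred}). Each of these integers divides~$12$, so every element of the cyclic group $\la(\Iq)$, being of order dividing~$\eq$, is annihilated by raising to the twelfth power. Thus $\lad$ restricted to~$\Iq$ is trivial, which is exactly the statement that $\lad$ is unramified at~$\q$.

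There is essentially no obstacle to overcome here: all the substance, namely the existence and Galois nature of the smallest field of good reduction, the identification of $\Phi_{\q}$ with automorphisms of the reduced curve and with the inertia of $\Kl/K$, and the resulting constraint $\eq \in \{1,2,3,4,6\}$, was already established in~\ref{sssec:eLbred}. The single point deserving a word of care is to keep the hypothesis $q \neq p$ explicitly in force, since it is precisely what allows $M_{\q}$ to be described through the $p$-torsion and hence identified with $M^{\la}_{\q}$; this is guaranteed by the framing of the subsection.
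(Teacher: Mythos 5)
Votre démonstration est correcte et suit exactement la même voie que le texte, qui se contente d'affirmer que la proposition \og r\'esulte de la discussion de la partie~\ref{sssec:eLbred}\fg~: vous explicitez pr\'ecis\'ement cette discussion (identification $M_{\q} = M^{\la}_{\q}$ via le lemme~\ref{lem:nonredadd} et Serre--Tate, puis identification de~$\Phi_{\q}$ avec l'inertie de~$\Kl/K$, c'est-\`a-dire avec~$\la(\Iq)$), en gardant bien en vigueur l'hypoth\`ese~$q \neq p$. La d\'eduction finale ($\eq \in \{1,2,3,4,6\}$ divise~$12$, donc~$\lad$ est trivial sur~$\Iq$) est \'egalement celle attendue.
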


La proposition suivante est le th\'eor\`eme~3~(\S 2) de~\cite{[SeTa]}. 
\begin{proposition}\label{prop:frobhorspbonST}
On suppose que~$E$ a potentiellement bonne r\'eduction en~$\q$. Alors le polyn\^ome caract\'eristique de l'action de~$\sq$ sur le module de Tate en~$p$ de~$E$ est \`a coefficients dans~$\Z$ (et ind\'ependant de~$p$)~; ses racines ont pour valeur absolue complexe~$\sqrt{ N\q}$.
\end{proposition}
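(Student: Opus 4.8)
Since the statement is identified above with Theorem~3 (\S2) of \cite{[SeTa]}, my plan is to reconstruct that argument in the present elliptic setting, reducing everything to the theory of endomorphisms of an elliptic curve over a finite field. The first step is to pass to a finite extension realising good reduction while keeping the residue field unchanged. As $E$ has potentially good reduction at $\q$, the minimal extension $M_{\q}$ of $\Kq^{nr}$ over which $E$ acquires good reduction (introduced in \S\ref{sssec:eLbred}) is finite and totally ramified, the residue field of $\Kq^{nr}$ being separablement close, with group $\fiq$. I would then choose a finite totally ramified extension $L_0/\Kq$ with $L_0\Kq^{nr}=M_{\q}$; by the criterion of N\'eron-Ogg-Shafarevich (\cite{[SeTa]}) the curve $E$ has good reduction over $L_0$, whose residue field is still $\kq$, of cardinality $N\q$.

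Over $L_0$ the module $V_p(E)$ is unramified and canonically identified with $V_p(\widetilde{E})$, where $\widetilde{E}$ is the reduced curve over $\kq$; a Frobenius for $L_0$ acts through the Frobenius endomorphism $F\in\Endo(\widetilde{E})$, of degree $N\q$. Because $L_0/\Kq$ is totally ramified, $\sq$ still induces the $N\q$-power map on residue fields, so I can write $\sq=\iota\,\phi_{L_0}$ with $\phi_{L_0}$ a Frobenius for $L_0$ and $\iota\in\Iq$. Applying $\phiEp$ and using the identification $\fiq\hookrightarrow\Aut(\widetilde{E})$ of \S\ref{sssec:eLbred} for the inertia action, the operator $\phiEp(\sq)$ becomes the action on $V_p(\widetilde{E})$ of the endomorphism $\psi:=u\circ F$, where $u\in\fiq\subset\Aut(\widetilde{E})$ is the image of $\iota$. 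The decisive point is that $\psi$ is a genuine element of $\Endo(\widetilde{E})$, a geometric object that does not depend on $p$.

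It then suffices to invoke the standard properties of the characteristic polynomial of an endomorphism of an elliptic curve. For any $\psi\in\Endo(\widetilde{E})$ this polynomial equals $X^{2}-\mathrm{tr}(\psi)X+\deg(\psi)$, has coefficients in $\Z$, and is the same for every prime $p\neq q$; here $\deg(\psi)=\deg(u)\deg(F)=N\q$ since $u$ is an automorphism, which yields both the constant term and the independence of $p$. For the modulus of the roots I would use the positivity of the degree form: writing $\overline{\psi}=\mathrm{tr}(\psi)-\psi$ for the dual isogeny, one has $\mathrm{tr}(\psi)^{2}-4\deg(\psi)=(\psi-\overline{\psi})^{2}=-\deg(\psi-\overline{\psi})\le 0$, so the two roots are complex conjugate of modulus $\sqrt{\deg(\psi)}=\sqrt{N\q}$ (the Hasse-type estimate, \cite{[Sil]}, \S V).

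The delicate point, where I would be most careful, is the reduction step: producing a totally ramified $L_0/\Kq$ over which good reduction holds, so that the residue field remains $\kq$ and $\deg F$ stays equal to $N\q$ rather than to a higher power, and checking that the inertia contribution $u$ is indeed a $p$-independent automorphism of the reduced curve. Once $\phiEp(\sq)$ has been realised as a single endomorphism of $\widetilde{E}$, the three asserted properties follow formally from the theory over finite fields. It is worth noting that, although $\mathrm{tr}(\psi)$ and hence the polynomial itself may depend on the chosen lift $\sq$, the integrality, the independence of $p$ and the value $\sqrt{N\q}$ for the modulus of the roots do not.
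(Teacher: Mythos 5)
The paper gives no proof of this proposition: it is stated as a pure citation of Theorem~3, \S2 of~\cite{[SeTa]}. Your proposal therefore supplies what the paper omits, and your reconstruction follows the Serre--Tate strategy faithfully: pass to a finite extension of~$\Kq$ with residue field still~$\kq$ over which~$E$ has good reduction, realise the action of~$\sq$ on~$V_p(E)\cong V_p(\widetilde{E})$ as~$T_p(\psi)$ for a genuine endomorphism~$\psi=u\circ F$ of~$\widetilde{E}$ of degree~$N\q$, and conclude integrality, independence of~$p$ and the archimedean estimate from the theory over finite fields (your positivity argument $\mathrm{tr}(\psi)^2-4\deg(\psi)=(\psi-\overline{\psi})^2=-\deg(\psi-\overline{\psi})\le 0$ is exactly the proof of Hasse's bound, \cite{[Sil]}, \S V.1). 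The compatibilities you invoke (reduction bijective on prime-to-$q$ torsion, inertia acting through~$\fiq\subset\Aut(\widetilde{E})$, a Frobenius of~$L_0$ acting as~$F$) are correct and are the content of \S2 of~\cite{[SeTa]} recalled in la partie~\ref{sssec:eLbred}; note also that~$M_\q$, hence~$\psi$, is independent of~$p$ precisely because~$M_\q$ is defined intrinsically there.

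The only real gap is the one you flag yourself: the existence of a finite \emph{totally ramified}~$L_0/\Kq$ with~$L_0\Kq^{nr}=M_\q$. The statement is true, and the right choice of~$L_0$ even simplifies your argument. Since~$\q\nmid p$, one has~$M_\q=\Kq^{nr}\left(E[p]\right)$, so~$M_\q/\Kq$ is Galois; set~$G=\Gal(M_\q/\Kq)$ and let~$H$ be the closure in~$G$ of the subgroup generated by the restriction of~$\sq$ to~$M_\q$. Then~$H$ is procyclic and its image in~$\Gal(\Kq^{nr}/\Kq)\cong\widehat{\Z}$ is a closed subgroup containing the Frobenius, hence equals~$\widehat{\Z}$; as~$H$ is a quotient of~$\widehat{\Z}$ and every surjective continuous endomorphism of~$\widehat{\Z}$ is an isomorphism, the map~$H\to\widehat{\Z}$ is an isomorphism, so~$H\cap\fiq=\{1\}$ and~$\fiq H=G$. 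The fixed field~$L_0=M_\q^{\,H}$ is then finite of degree~$\eq$ over~$\Kq$, totally ramified (its maximal unramified subextension~$L_0\cap\Kq^{nr}$ equals~$\Kq$), and satisfies~$L_0\Kq^{nr}=M_\q$; N\'eron--Ogg--Shafarevich gives good reduction over~$L_0$ exactly as you say. Moreover, with this choice~$\sq$ fixes~$L_0$, so~$\sq$ is itself a Frobenius element for~$L_0$: in your decomposition one may take~$\iota=1$, hence~$u=1$ and~$\psi=F$, and the proposition becomes literally Hasse's theorem for~$\widetilde{E}/\kq$. If instead one takes an arbitrary complement~$L_0$, your decomposition~$\sq=\iota\,\phi_{L_0}$ with~$\psi=u\circ F$ works as written; either way the argument is complete once this splitting is justified.
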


\begin{notations}\label{not:Lq}\ 
\begin{enumerate}
\item On note~$\Pq(X)$ le polyn\^ome caract\'eristique de l'action de~$\sq$ sur le module de Tate en~$p$ de~$E$.
Comme le d\'eterminant de la repr\'esentation de~$G_K$ sur le module de Tate en~$p$ de~$E$ est le caract\`ere cyclotomique (\`a valeurs dans~$\Zp^{\times}$),~$\Pq(X)$ est de la forme~$X^2 - \Tq X + N\q$ avec $\Tq$ un entier de valeur absolue inf\'erieure ou \'egale \`a~$2\sqrt{N\q}$. Son discriminant~$\Tq^2 - 4N\q$ est donc un entier n\'egatif et ses racines sont conjugu\'ees complexes l'une de l'autre.
\item On note~$\Lq$ le sous-corps engendr\'e dans~$\C$ par les racines de~$\Pq(X)$~; le corps~$\Lq$ est soit~$\Q$ soit un corps quadratique imaginaire.
\end{enumerate}
\end{notations}

\begin{proposition}\label{prop:frobhorspbon}
On suppose que~$E$ a potentiellement bonne r\'eduction en~$\q$. 
Soit~$\Pcq$ un id\'eal premier de~$\Lq$ au-dessus de~$p$.
Alors les images dans~$\OLq / \Pcq$ des racines de~$\Pq(X)$ sont dans~$\Fpx$~;
 il existe une racine~$\bq$ de~$\Pq(X)$ v\'erifiant~:
 $$
 \left( \la(\sq) , \left( \kip\la^{-1} \right) (\sq) \right) = \left( \bq \Mod \Pcq , \bqb \Mod \Pcq \right).
 $$
\end{proposition}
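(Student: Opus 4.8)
L'essentiel de la preuve consiste \`a reconna\^itre la r\'eduction modulo~$p$ du polyn\^ome~$\Pq(X)$ comme polyn\^ome caract\'eristique de~$\phiEp(\sq)$. Le module de~$p$-torsion~$E_p$ s'identifie au quotient~$T_p(E)/pT_p(E)$ du module de Tate comme~$G_K$-module~; l'action de~$\sq$ sur~$E_p$ est donc la r\'eduction modulo~$p$ de son action sur le module de Tate. Comme~$\Pq(X)$ est, par d\'efinition (proposition~\ref{prop:frobhorspbonST} et notations~\ref{not:Lq}), le polyn\^ome caract\'eristique de~$\sq$ sur ce module (pour le rel\`evement de Frobenius fix\'e une fois pour toutes dans les notations), sa r\'eduction modulo~$p$ est le polyn\^ome caract\'eristique de~$\phiEp(\sq)$ sur~$E_p$.

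Je calculerais ensuite ce dernier \`a partir de la forme triangulaire de la repr\'esentation. Dans la base fix\'ee de~$E_p$, la matrice de~$\phiEp(\sq)$ est triangulaire sup\'erieure de coefficients diagonaux~$\la(\sq)$ et~$\left( \kip\la^{-1} \right)(\sq)$~; son polyn\^ome caract\'eristique est donc scind\'e sur~$\Fp$, \'egal \`a~$\left( X - \la(\sq) \right)\left( X - \left( \kip\la^{-1} \right)(\sq) \right)$. En comparant avec l'\'etape pr\'ec\'edente, on obtient dans~$\Fp[X]$~:
$$
\Pq(X) \Mod p = \left( X - \la(\sq) \right)\left( X - \left( \kip\la^{-1} \right)(\sq) \right).
$$

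Pour la premi\`ere assertion, je remarquerais que les racines de~$\Pq(X) \Mod p$ dans~$\aFp$ sont, d'une part, les r\'eductions modulo~$\Pcq$ des racines~$\bq$ et~$\bqb$ de~$\Pq(X)$ (qui sont des entiers alg\'ebriques de~$\Lq$, donc dans~$\OLq$, et qu'on lit dans le corps r\'esiduel~$\OLq / \Pcq$ plong\'e dans~$\aFp$) et, d'autre part, vu l'\'egalit\'e ci-dessus, les \'el\'ements~$\la(\sq)$ et~$\left( \kip\la^{-1} \right)(\sq)$ de~$\Fp$. Les images de~$\bq$ et~$\bqb$ dans~$\OLq / \Pcq$ sont donc dans~$\Fp$~; leur produit valant~$\bq\bqb = N\q$, premier \`a~$p$ puisque~$q \neq p$, elles sont non nulles, donc dans~$\Fpx$.

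Il resterait \`a apparier correctement les racines. Quitte \`a \'echanger les deux racines conjugu\'ees, je choisirais~$\bq$ de sorte que~$\la(\sq) = \bq \Mod \Pcq$. Comme le d\'eterminant de~$\phiEp$ est le caract\`ere cyclotomique, on a~$\la(\sq)\left( \kip\la^{-1} \right)(\sq) = \kip(\sq) = N\q \Mod p$, \'egal \`a~$\bq\bqb \Mod \Pcq$~; en divisant par~$\la(\sq) = \bq$, non nul, on obtient~$\left( \kip\la^{-1} \right)(\sq) = \bqb \Mod \Pcq$, d'o\`u la conclusion. La seule \'etape demandant un peu de soin est la premi\`ere identification (relier l'action abstraite de~$\sq$ sur le module de Tate \`a la matrice explicite de~$\phiEp(\sq)$)~; elle est toutefois imm\'ediate d\`es qu'on voit~$E_p$ comme r\'eduction modulo~$p$ du module de Tate, et tout le contenu arithm\'etique r\'eside dans le fait que la r\'eductibilit\'e force~$\Pq(X) \Mod p$ \`a \^etre scind\'e sur~$\Fp$.
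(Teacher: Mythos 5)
Votre preuve est correcte et suit essentiellement la m\^eme d\'emarche que celle du texte~: on r\'eduit~$\Pq(X)$ modulo~$p$, on l'identifie au polyn\^ome caract\'eristique de~$\phiEp(\sq)$, qui est scind\'e dans~$\Fp$ de racines~$\la(\sq)$ et~$\left( \kip\la^{-1} \right)(\sq)$ par la forme triangulaire de la repr\'esentation, puis on fait correspondre ces racines aux classes modulo~$\Pcq$ des racines de~$\Pq(X)$. La seule diff\'erence est que vous explicitez l'appariement des racines via la relation du d\'eterminant, \'etape que le texte laisse implicite.
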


\begin{proof}
Soit~$\widetilde{\Pq}(X)$ la r\'eduction modulo~$p$ de~$\Pq(X)$.
Alors~$\widetilde{\Pq}(X)$ est le polyn\^ome caract\'eristique de~$\phiEp(\sq)$~; 
il est donc scind\'e dans~$\Fp$ et ses racines sont~$\la(\sq)$ et~$\kip\la^{-1}(\sq)$. 
D'autre part,~$\widetilde{\Pq}(X)$ est aussi la r\'eduction modulo~$\Pcq $ de~$\Pq(X)$, dont les racines dans le corps~$\OLq / \Pcq$ sont les classes modulo~$\Pcq$ des racines de~$\Pq(X)$ dans~$\Lq$.
\end{proof}

\begin{remarques}\label{rem:pdsLq}\ 
\begin{enumerate}
\item Le corps~$\Lq$ est \'egal \`a~$\Q$ si et seulement si le discriminant~$\Tq^2 - 4N\q$ est nul~;
lorsque c'est le cas,~$\Pq(X)$ a une racine double, appartenant \`a~$\Z$, \'egale \`a~$\Tq/2$ et~$N\q$ est le carr\'e de~$\Tq/2$~; 
ceci implique notamment que le degr\'e r\'esiduel de~$\q$ dans l'extension~$K/ \Q$ est pair et que~$E$ a potentiellement bonne r\'eduction supersinguli\`ere en~$\q$.
\item Plus g\'en\'eralement, la courbe~$E$ a potentiellement bonne r\'eduction supersinguli\`ere en~$\q$ si et seulement si $q$ divise $\Tq$. 
Lorsque~$\q$ est de degr\'e~$1$ dans~$K / \Q$,~$q$ divise~$\Tq$ si et seulement si (($\Tq$ = 0) ou ($q = 2$ et ($\Tq = 0$,~$2$ ou~$-2$)) ou ($q = 3$ et ($\Tq = 0$,~$3$ ou~$-3$)).
\item Le polyn\^ome  caract\'eristique de~$\phiEp(\sq)$ est scind\'e dans~$\Fp$ et \'egal \`a $\Pq(X)$ modulo~$p$~; l'entier~$\Tq^2 - 4N\q$ est donc un carr\'e modulo~$p$. On en d\'eduit que soit~$p$ divise~$\Tq^2 - 4N\q$, soit~$\Lq$ est un corps quadratique imaginaire dans lequel~$p$ est d\'ecompos\'e (les deux cas ne s'excluant pas).
\end{enumerate}
\end{remarques}

\section{Compatibilit\'e en et hors de~$p$}\label{sec:compat}

\begin{notations}
Dans toute la suite du texte, on note~$\mu$ la puissance douzi\`eme du caract\`ere d'isog\'enie~$\la$~; on rappelle (notations~\ref{not:dicr}) que~$\Kl$ d\'esigne l'extension ab\'elienne de~$K$ trivialisant le caract\`ere~$\la$ et on note~$\Km$ l'extension de~$K$ trivialisant~$\mu$. Le corps~$\Km$ est inclus dans le corps~$\Kl$ et l'extension~$\Kl / \Km$ est cyclique, de degr\'e divisant le pgcd de~$12$ et~$p-1$~; l'extension~$\Km / K$ est cyclique et d'ordre divisant~$p-1$. On note~$\amu$ le morphisme de groupes injectif du groupe de Galois~$\Gal(\Km/K)$ dans~$\Fpx$ induit par~$\mu$.
\end{notations}

\subsection{Th\'eorie du corps de classes pour le caract\`ere~$\mu$}\label{ssec:cclmu}

La th\'eorie du corps de classes globale appliqu\'ee \`a l'extension ab\'elienne~$\Km/K$ fournit un morphisme de groupes du groupe des id\`eles de~$K$  dans le groupe de Galois~$\Gal(\Km/K)$ qui est continu, surjectif et trivial sur les id\`eles diagonales.
On note~$r$ ce morphisme.

\begin{notations}
On note~$\Ax_{K}$ le groupe des id\`eles de~$K$.
Soit~$\nu$ une place (finie ou infinie) de $K$. On note~$K_{\nu}$ le compl\'et\'e de~$K$ en~$\nu$ et~$r_{\nu}$ la compos\'ee de l'injection de~$K_{\nu}^{\times}$ dans les id\`eles~$\Ax_{K}$ et de l'application de r\'eciprocit\'e~$r$ introduite ci-dessus.
Lorsque~$\nu$ est une place finie, on note~$U_{K_{\nu}}$ les unit\'es du corps local~$K_{\nu}$~; dans ce cas, on utilise indiff\'eremment en indice la place~$\nu$ et l'id\'eal maximal de~$K$ qui lui correspond.
\end{notations}

\subsubsection{En une place infinie}

Soit~$\nu$ une place infinie de~$K$~; alors l'application $r_{\nu}$ est triviale.

En effet, soit~$r'_{\nu}$ le morphisme de groupes de~$K^{\times}_{\nu}$ dans~$\Gal ( \Kl / K )$ associ\'e de mani\`ere analogue \`a l'extension ab\'elienne $\Kl / K$.
Alors $r_{\nu}$ est \'egale \`a la puissance douzi\`eme de la compos\'ee de~$r'_{\nu}$ et de la surjection naturelle de~$\Gal ( \Kl / K )$ dans~$\Gal ( \Km / K )$.
Comme~$\nu$ est une place infinie, l'application~$r'_{\nu}$ a pour image un groupe d'ordre divisant~$2$~; on en d\'eduit que~$r_{\nu}$ est triviale.
 
\subsubsection{En une place hors de~$p$}
 
 Soit~$\q$ un id\'eal maximal de~$K$ qui n'est pas au-dessus de $p$.
  
 D'apr\`es l'\'etude locale men\'ee dans la partie~\ref{sec:etloc}, l'extension~$\Km/K$ est non ramifi\'ee en~$\q$ (propositions~\ref{prop:ramfrobhorspmult} et~\ref{prop:ramhorspbon}).
 Ceci implique que l'application~$r_{\q}$ est triviale sur les unit\'es~$U_{\Kq}$.
 Soit~$\sqb$ la restriction de~$\sq$ \`a~$\Km$~;
 alors~$\sqb$ est l'unique \'el\'ement de Frobenius de~$\Gal(\Km/K)$ associ\'e \`a~$\q$~;
 l'application~$r_{\q}$ envoie toute uniformisante de~$\Kq$ sur~$\sqb$.

\subsubsection{En une place au-dessus de~$p$}
 
 Soit~$\p$ un id\'eal premier de~$K$ au-dessus de~$p$~; alors le morphisme~$\amu \circ r_{\p}$ co\"incide sur les unit\'es~$U_{\Kp}$ avec la compos\'ee suivante~:
$$
U_{\Kp} \xrightarrow{N_{\Kp/\Qp}} U_{\Qp} \xrightarrow[\text{modulo } p]{\text{r\'eduction}} \Fpx \xrightarrow[\text{\`a la puissance } -\ap]{\text{\'el\'evation}} \Fpx.
$$

\subsection{Loi de r\'eciprocit\'e pour le caract\`ere~$\mu$}

\begin{notations} Pour toute place~$\nu$ de~$K$, on note~$\iota_{\nu}$ le plongement de~$K$ dans le compl\'et\'e~$K_{\nu}$. Pour tout id\'eal maximal~$\mathfrak{L}$ de~$K$, on note~$\mathrm{val}_{\mathfrak{L}}$ la valuation de~$K$ associ\'ee \`a~$\mathfrak{L}$ dont l'image est~$\Z$.
\end{notations}

\begin{proposition}\label{prop:recmu}
Soient~$\al$ un \'el\'ement de~$K$ non nul et premier \`a~$p$
 et~$ \prod_{\q \nmid p} \q^{\mathrm{val}_{\q}(\al)}$ la d\'ecomposition de l'id\'eal fractionnaire~$\al \OK$ en produit d'id\'eaux premiers de $K$.
 Alors on a~:
 $$
 \prod\limits_{\q \nmid p} \mu \left(\sq\right)^{\mathrm{val}_{\q}(\al)} =
 \prod_{\p | p} N_{\Kp/\Qp}\left(\iota_{\p}(\al)\right)^{\ap} \Mod p.
 $$
\end{proposition}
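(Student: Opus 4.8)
Le plan est d'appliquer la loi de r\'eciprocit\'e de la th\'eorie du corps de classes globale --- la trivialit\'e de~$r$ sur les id\`eles principales --- en la combinant avec les trois calculs locaux men\'es dans la partie~\ref{ssec:cclmu}. L'id\'ee directrice est la suivante~: \`a l'id\`ele diagonale (donc principale) associ\'ee \`a~$\al$, on applique le morphisme~$\amu \circ r$, qui est trivial, puis on d\'ecompose le r\'esultat en un produit de contributions locales, une par place de~$K$.

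Plus pr\'ecis\'ement, par compatibilit\'e entre la loi de r\'eciprocit\'e globale et les lois locales, l'image par~$r$ de l'id\`ele diagonale~$(\iota_{\nu}(\al))_{\nu}$ est \'egale au produit~$\prod_{\nu} r_{\nu}(\iota_{\nu}(\al))$~; ce produit est fini, car en presque toute place finie~$\nu$ hors de~$p$,~$\al$ est une unit\'e de~$K_{\nu}$ et~$r_{\nu}$ est trivial sur les unit\'es (partie~\ref{ssec:cclmu}), de sorte que presque tous les facteurs sont triviaux. Comme l'id\`ele diagonale est principale, son image par~$r$ est triviale~; en appliquant le morphisme injectif~$\amu$, on obtient dans~$\Fpx$ l'\'egalit\'e
$$\prod_{\nu} \amu\left( r_{\nu}(\iota_{\nu}(\al)) \right) = 1.$$

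Il reste \`a \'evaluer chaque facteur selon le type de la place~$\nu$, en reprenant les r\'esultats de la partie~\ref{ssec:cclmu}. En une place infinie,~$r_{\nu}$ est trivial et le facteur vaut~$1$. En une place finie~$\q$ hors de~$p$, on \'ecrit~$\iota_{\q}(\al) = \pi^{\mathrm{val}_{\q}(\al)} u$ avec~$\pi$ une uniformisante de~$\Kq$ et~$u$ une unit\'e~; la trivialit\'e de~$r_{\q}$ sur les unit\'es et le fait que~$r_{\q}$ envoie~$\pi$ sur le Frobenius~$\sqb$ donnent~$\amu(r_{\q}(\iota_{\q}(\al))) = \amu(\sqb)^{\mathrm{val}_{\q}(\al)} = \mu(\sq)^{\mathrm{val}_{\q}(\al)}$, la derni\`ere \'egalit\'e venant de ce que~$\amu$ est induit par~$\mu$ et que~$\sqb$ est la restriction de~$\sq$ \`a~$\Km$. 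Enfin, en une place~$\p$ au-dessus de~$p$, l'hypoth\`ese que~$\al$ est premier \`a~$p$ assure que~$\iota_{\p}(\al)$ est dans~$U_{\Kp}$, si bien que la formule locale de la partie~\ref{ssec:cclmu} s'applique et donne~$\amu(r_{\p}(\iota_{\p}(\al))) = N_{\Kp/\Qp}(\iota_{\p}(\al))^{-\ap} \Mod p$.

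En reportant ces trois \'evaluations dans l'\'egalit\'e pr\'ec\'edente, on obtient
$$\prod_{\q \nmid p} \mu(\sq)^{\mathrm{val}_{\q}(\al)} \cdot \prod_{\p \mid p} N_{\Kp/\Qp}(\iota_{\p}(\al))^{-\ap} = 1 \Mod p,$$
d'o\`u la formule voulue en faisant passer le second produit dans le membre de droite, op\'eration qui change l'exposant~$-\ap$ en~$\ap$. Je m'attends \`a ce que les seuls points demandant un peu de soin soient la compatibilit\'e locale-globale de la loi de r\'eciprocit\'e (avec la finitude du produit sur les places), le suivi correct du signe de~$\ap$, et l'emploi, aux places au-dessus de~$p$, de l'hypoth\`ese de primalit\'e de~$\al$ \`a~$p$ qui rend~$\iota_{\p}(\al)$ unit\'e~; tous les calculs locaux proprement dits ayant d\'ej\`a \'et\'e men\'es dans la partie~\ref{ssec:cclmu}, la preuve se ram\`ene pour l'essentiel \`a ce recollement.
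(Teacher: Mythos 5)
Votre preuve est correcte et suit essentiellement le m\^eme chemin que celle de l'article~: trivialit\'e de l'application de r\'eciprocit\'e~$r$ sur l'id\`ele principale associ\'ee \`a~$\al$, d\'ecomposition en facteurs locaux, puis \'evaluation de chaque facteur par les calculs de la partie~\ref{ssec:cclmu} (trivialit\'e aux places infinies, Frobenius \`a la puissance~$\mathrm{val}_{\q}(\al)$ hors de~$p$, norme \`a la puissance~$-\ap$ au-dessus de~$p$ gr\^ace \`a la primalit\'e de~$\al$ \`a~$p$). Votre remarque sur la finitude du produit, que l'article ne fait qu'indiquer entre parenth\`eses, est un compl\'ement bienvenu mais ne constitue pas une diff\'erence de m\'ethode.
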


\begin{proof}
L'image par $r$ de l'id\`ele principale~$(\iota_{\nu}(\al))_{\nu}$ est triviale.
On d\'etermine l'image de~$\iota_{\nu}(\al)$ par~$\amu \circ r_{\nu}$ pour les diff\'erentes places~$\nu$ de~$K$ en utilisant la partie~\ref{ssec:cclmu}.
\begin{itemize}
 \item[$\bullet$] Si~$\nu$ est une place infinie de~$K$, l'application~$r_{\nu}$ est triviale, donc~$\amu \circ r_{\nu}(\iota_{\nu}(\al))$ l'est \'egalement.
 \item[$\bullet$] Si~$\q$ est un id\'eal maximal de~$K$ premier \`a~$p$, alors~$\iota_{\q}(\al)$ est un \'el\'ement de~$\Kq^{\times}$ de valuation~$\mathrm{val}_{\q}(\al)$~;
  son image par~$r_{\q}$ est~$\sqb^{\mathrm{val}_{\q}(\al)}$ et son image par~$\amu \circ r_{\q}$ est~$ \amu \left(\sqb\right)^{\mathrm{val}_{\q}(\al)}$.
 \item[$\bullet$] Si~$\p$ est un id\'eal maximal de~$K$ au-dessus de~$p$, alors,~$\al$ \'etant suppos\'e premier \`a~$p$,~$\iota_{\p}(\al)$ est une unit\'e de~$\Kp$ ;
  on a  donc~$\amu \circ r_{\p}(\iota_{\p}(\al))  = N_{\Kp/\Qp}(\iota_{\p}(\al))^{-\ap} \Mod  p$.
\end{itemize}
Finalement on a (tous les produits \'etant finis) :
$$
\begin{array}{r c l}
 1 & = & \amu \circ r \left(\left(\iota_{\nu}(\al)\right)_{\nu}\right) \\
   & = & \amu\left(\prod\limits_{\nu}  r_{\nu}\left(\iota_{\nu}(\al)\right)\right) \\
   & = & \prod\limits_{\nu} \amu \circ r_{\nu}\left(\iota_{\nu}(\al)\right) \\
   & =& \prod\limits_{\nu | \infty} 1 \times \prod\limits_{\q \nmid p}\amu \circ r_{\q}\left(\iota_{\q}(\al)\right) \times \prod\limits_{\p | p} \amu \circ r_{\p}\left(\iota_{\p}(\al)\right) \\
   & = & \prod\limits_{\q \nmid p} \amu\left(\sqb\right)^{\mathrm{val}_{\q}(\al)} \times \prod\limits_{\p | p}  \left( N_{K_{\p}/\Q_{\p}}\left(\iota_{\p}(\alpha)\right)^{-\ap} \Mod  p \right)\\
   & = & \prod\limits_{\q \nmid p} \amu\left(\sqb\right)^{\mathrm{val}_{\q}(\al)} \times \left( \left( \prod\limits_{\p | p} N_{K_{\p}/\Q_{\p}}\left(\iota_{\p}(\alpha)\right)^{\ap} \right) \Mod  p \right)^{-1}.\\
\end{array}
$$
Avec l'\'egalit\'e~$\mu\left(\sq\right) = \amu\left(\sqb\right)$ pour tout id\'eal maximal de~$K$ premier \`a~$p$, on obtient le r\'esultat de la proposition.
\end{proof}

\begin{notations}\label{not:po}\ 
\begin{enumerate}
\item Dans tout la suite du texte, on suppose que l'extension~$K/\Q$ est galoisienne ; on note~$G$ son groupe de Galois.
\item On fixe \'egalement pour toute la suite du texte un id\'eal~$\po$ de~$K$ au-dessus de~$p$.
\item Pour tout \'el\'ement~$\tau$ de~$G$, on note~$\ato$ l'entier~$\ap$ associ\'e \`a l'id\'eal~$\p = \tau^{-1}(\po)$.
\item On note~$\cN$ l'application de~$K$ dans lui-m\^eme qui envoie un \'el\'ement~$\alpha$ sur la norme tordue par les entiers~$(\ato)_{\tau \in G}$~:~$\cN (\al) = \prod_{\tau \in G} \tau(\alpha)^{\ato}$. 
On note que l'application~$\cN$ pr\'eserve~$K^{\times}$,~$\OK$ et les \'el\'ements de~$K$ premiers \`a~$p$. 
\end{enumerate}
\end{notations}

Avec ces notations, la proposition~\ref{prop:recmu} admet la reformulation globale suivante (qui redonne le lemme~1 du~\S2 de~\cite{[Mom]}).

\begin{proposition}\label{prop:recmuglob}
 Soient~$\al$ un \'el\'ement de~$K$ non nul et premier \`a~$p$
 et~$\prod_{\q \nmid p} \q^{\mathrm{val}_{\q}(\al)}$ la d\'ecomposition de l'id\'eal fractionnaire~$\al \OK$ en produit d'id\'eaux premiers de $K$.
 Alors on a~:
 $$
 \prod\limits_{\q \nmid p} \mu \left(\sq\right)^{\mathrm{val}_{\q}(\al)} = \iota_{\po} \left( \cN (\al) \right) \Mod \po.
 $$
\end{proposition}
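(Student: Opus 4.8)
The right-hand sides of Proposition~\ref{prop:recmu} and of the statement to be proved share the \emph{same} left-hand product $\prod_{\q\nmid p}\mu(\sq)^{\mathrm{val}_{\q}(\al)}$, so the plan is simply to check that the two target expressions agree, i.e. that
$$\prod_{\p\mid p} N_{\Kp/\Qp}\bigl(\iota_{\p}(\al)\bigr)^{\ap}\ \Mod p \;=\; \iota_{\po}\bigl(\cN(\al)\bigr)\ \Mod \po .$$
I would prove this by showing that \emph{both} members equal the explicit product $\prod_{\p\mid p} N_{\kp/\Fp}\bigl(\overline{\al}^{(\p)}\bigr)^{\ap}$ of residue-field norms, where $\overline{\al}^{(\p)}\in\kp$ denotes the reduction of $\al$ modulo~$\p$. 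Since $\al$ is prime to~$p$, so is $\cN(\al)$ (notations~\ref{not:po}), and all these reductions are units. Throughout I use that $p$ is unramified in~$K$, so each completion $\Kp/\Qp$ is unramified and reduction induces an isomorphism $D_{\p}\xrightarrow{\sim}\Gal(\kp/\Fp)$ of the decomposition group onto the residue Galois group.

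First I would treat the left member. As $\al$ is prime to~$p$, each $\iota_{\p}(\al)$ is a unit of $\mathcal{O}_{\Kp}$ and $N_{\Kp/\Qp}(\iota_{\p}(\al))\in\Zp^{\times}$. Writing the local norm as the product over the decomposition group, $N_{\Kp/\Qp}(\iota_{\p}(\al))=\prod_{g\in D_{\p}}\iota_{\p}(g\al)$, and reducing modulo~$p$, the isomorphism $D_{\p}\simeq\Gal(\kp/\Fp)$ turns this into $N_{\kp/\Fp}(\overline{\al}^{(\p)})$; raising to the power~$\ap$ and multiplying over $\p\mid p$ gives the announced value.

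For the right member I would expand globally: by definition $\iota_{\po}(\cN(\al))=\prod_{\tau\in G}\iota_{\po}(\tau\al)^{\ato}$, and modulo~$\po$ I would use $\overline{\tau\al}^{(\po)}=\overline{\tau}\bigl(\overline{\al}^{(\tau^{-1}\po)}\bigr)$, where $\overline{\tau}\colon\OK/\tau^{-1}\po\to\OK/\po$ is the residue-field isomorphism induced by~$\tau$. Regrouping the product over $\tau\in G$ according to the prime $\p=\tau^{-1}(\po)$, and fixing for each $\p\mid p$ an element $\tau_{\p}$ with $\tau_{\p}(\p)=\po$, one has $\{\tau:\tau^{-1}(\po)=\p\}=\tau_{\p}D_{\p}$, on which $\ato=\ap$ is constant. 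The decisive step is then the evaluation over one coset: for $\tau=\tau_{\p}h$ with $h\in D_{\p}$ one has $\overline{\tau}=\overline{\tau_{\p}}\circ\overline{h}$ with $\overline{h}\in\Gal(\kp/\Fp)$, whence
$$\prod_{h\in D_{\p}}\overline{\tau_{\p}}\bigl(\overline{h}(\overline{\al}^{(\p)})\bigr)=\overline{\tau_{\p}}\Bigl(\prod_{h\in D_{\p}}\overline{h}(\overline{\al}^{(\p)})\Bigr)=\overline{\tau_{\p}}\bigl(N_{\kp/\Fp}(\overline{\al}^{(\p)})\bigr)=N_{\kp/\Fp}(\overline{\al}^{(\p)}),$$
the last equality because $N_{\kp/\Fp}(\overline{\al}^{(\p)})\in\Fp$ is fixed by the $\Fp$-embedding $\overline{\tau_{\p}}$. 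Taking the product over $\p\mid p$ with exponents $\ap$ yields exactly $\prod_{\p\mid p} N_{\kp/\Fp}(\overline{\al}^{(\p)})^{\ap}$, matching the left member; combined with Proposition~\ref{prop:recmu} this finishes the proof. I expect the only genuine subtlety to be this coset bookkeeping: keeping straight that $\tau$ induces an isomorphism \emph{between} distinct residue fields while $h\in D_{\p}$ acts \emph{within}~$\kp$, and that the resulting residue-field norm lands in $\Fp$ and is therefore insensitive to the choice of~$\tau_{\p}$.
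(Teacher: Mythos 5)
Your proof is correct, and it is in substance the same argument as the paper's. The paper's entire proof is the one-sentence assertion that $\prod_{\p\mid p} N_{\Kp/\Qp}\left(\iota_{\p}(\al)\right)^{\ap}$ \emph{equals} $\iota_{\po}\left(\cN(\al)\right)$ as elements of $K_{\po}$, the verification being left to the reader; your decomposition of $G$ into the cosets $\tau_{\p}D_{\p}$, on which $\ato$ is constant, together with the observation that the connecting isomorphism fixes the norm, is exactly that verification, except that you carry it out after reduction modulo $\po$ and therefore prove only the congruence (which is all the proposition needs) rather than the exact identity. The one substantive difference this makes: your residue-field route invokes $D_{\p}\simeq\Gal(\kp/\Fp)$, hence the unramifiedness of $p$ in $K$ --- harmless here, since that is a standing hypothesis --- whereas the exact identity in $K_{\po}$ requires no ramification assumption, because the isomorphism $\Kp\xrightarrow{\sim}K_{\po}$ extending $\iota_{\po}\circ\tau_{\p}$ is $\Qp$-linear and so fixes the local norm $N_{\Kp/\Qp}\left(\iota_{\p}(\al)\right)\in\Qp$.
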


\begin{proof}
On v\'erifie que l'\'el\'ement~$\prod_{\p | p} N_{\Kp/\Qp}\left(\iota_{\p}(\al)\right)^{\ap}$  de~$\Zp$ est l'image par l'injection canonique~$\iota_{\po}$ de~$K$ dans son compl\'et\'e~$K_{\po}$ de l'\'el\'ement~$\prod_{\tau \in G} \tau(\al)^{\ato}$ de~$K$. 
\end{proof}

\subsection{Une borne pour la hauteur des associ\'es d'un entier}

\begin{notation}
Soit~$\al$ un \'el\'ement non nul de~$K$. On note~$\h$ la hauteur absolue de~$\al$ d\'efinie par
$$
\h(\al) =  \left( \prod  \limits_{\nu \text{ place de } K}  \max \left(1, \left| \al \right|_{\nu} \right)\right)^{1/d_K},
$$
avec les normalisations suivantes pour les valeurs absolues~$| \cdot |_{\nu}$~:
\begin{itemize}
\item[$\bullet$] si~$\nu$ est une place r\'eelle, correspondant \`a un \'el\'ement~$\tau$ de~$G$, on pose~$| \cdot |_{\nu} = | \tau(\cdot) |_{\C}$~;
\item[$\bullet$] si~$\nu$ est une place complexe, correspondant \`a un \'el\'ement~$\tau$ de~$G$, on pose~$| \cdot |_{\nu} = | \tau(\cdot) |^2_{\C}$~;
\item[$\bullet$] si~$\nu$ est une place finie, correspondant \`a un id\'eal maximal~$\mathfrak{L}$ de $K$, on pose~$| \cdot |_{\nu} = (N\mathfrak{L})^{ -  \mathrm{val}_{\mathfrak{L}}(\cdot)}$.
\end{itemize}
On remarque que lorsque~$\al$ est entier dans~$K$, les seules places apportant une contribution non triviale dans le produit d\'efinissant~$ \h ( \al ) $ sont les places infinies.
\end{notation}

\begin{lemme}\label{lem:hautalpha}
Soit~$\al$ un \'el\'ement non nul de~$\OK$~; alors, pour tout~$\tau$ dans~$G$, on a~:
$$
\left| \tau \left( \cN(\al) \right) \right|_{\C} \leq \h(\al) ^{12d_K}.
$$
\end{lemme}

\begin{proof}
Soit~$\tau$ dans~$G$ fix\'e~; on a~:
$$
\tau\left( \cN(\al) \right)  =   \tau\left(\prod\limits_{\tau' \in G} \tau'(\al)^{a_{\tau'}} \right) 
				  =  \prod\limits_{\tau' \in G}  \tau'(\al)^{a_{\tau^{-1}\tau'}}, 
$$
Pour tout~$\tau'$ dans~$G$, on a ($a_{\tau^{-1}\tau'}$ \'etant un entier compris entre~$0$ et~$12$))~:
$$
\left|\tau'\left( \al \right) \right|_{\C}^{a_{\tau^{-1}\tau'}}  \leq  \left(\max \left(1,\left|\tau'\left( \al \right) \right|_{\C}\right)\right)^{a_{\tau^{-1}\tau'}} 
                                  \leq  \left(\max \left(1,\left|\tau'\left( \al \right) \right|_{\C}\right)\right)^{12}.
$$
 
Comme le corps~$K$ est suppos\'e galoisien sur~$\Q$, ses places infinies  sont soit toutes  r\'eelles, soit toutes complexes.

Lorsque toutes les places de~$K$ sont r\'eelles, il y en a exactement~$d_K$, qui correspondent bijectivement aux \'el\'ements de~$G$~; on a alors~:
$$
\begin{array}{r c l}
\left|\tau\left( \cN(\al) \right)\right|_{\C}  =  \prod\limits_{\tau' \in G}  \left|\tau'(\al)\right|_{\C}^{a_{\tau^{-1}\tau'}}  & \leq &  \left(\prod\limits_{\tau' \in G} \max \left(1,\left|\tau'\left( \al \right) \right|_{\C}\right)\right)^{12} \\
				                          & \leq &  \left(\prod\limits_{\nu | \infty} \max \left(1,\left| \al \right|_{\nu}\right)\right)^{12} = \h(\al)^{12d_K}.\\
\end{array}
$$

Lorsque toutes les places de~$K$ sont complexes, le degr\'e~$d_K$ de~$K$ sur~$\Q$ est pair et la conjugaison complexe induit dans~$G$ un \'el\'ement~$c$ d'ordre~$2$.
 Le corps~$K$ a exactement~$d_K/2$ places infinies~; deux \'el\'ements de~$G$ d\'efinissent la m\^eme place infinie si et seulement s'ils sont \'egaux ou conjugu\'es complexes l'un de l'autre. 
 On fixe un syst\`eme~$\widetilde{G}$ de repr\'esentants de~$G$ modulo le sous-groupe d'ordre~$2$ engendr\'e par~$c$.
  On a alors~:
$$
\begin{array}{r c l}
\left|\tau\left( \cN(\al) \right)\right|_{\C} & = & \prod\limits_{\tau' \in \widetilde{G}}  \left|\tau'(\al)\right|_{\C}^{a_{\tau^{-1}\tau'}} \left|c\tau'(\al)\right|_{\C}^{a_{\tau^{-1}c\tau'}}\\ 
                                				    & \leq&  \left(\prod\limits_{\tau' \in \widetilde{G}} \max \left(1,\left|\tau'\left( \al \right) \right|_{\C}\right)\max \left(1,\left|c\tau'\left( \al \right) \right|_{\C}\right)\right)^{12} \\
				    			& \leq&  \left(\prod\limits_{\tau' \in \widetilde{G}} \max \left(1,\left|\tau'\left( \al \right) \right|_{\C}\right)^2\right)^{12} \\
							& \leq&  \left(\prod\limits_{\tau' \in \widetilde{G}} \max \left(1,\left|\tau'\left( \al \right) \right|^2_{\C}\right)\right)^{12} \\
				                             & \leq&  \left(\prod\limits_{\nu | \infty} \max \left(1,\left| \al \right|_{\nu}\right)\right)^{12} = \h(\al)^{12d_K}. \\
\end{array}
$$
\end{proof}

\begin{notations}\label{not:C1} Suivant~\cite{[BuGy]}, on note~:
\begin{itemize}
\item[$\bullet$] $R_K$ le r\'egulateur de $K$~;
\item[$\bullet$] $r_K$ le rang du groupe des unit\'es de $K$ ($r_K$ vaut $d_K - 1$ si $K$ est totalement r\'eel et~$\frac{d_K}{2}-1$ sinon)~;
\item[$\bullet$] $\delta_K$ un r\'eel strictement positif minorant~$d_K \ln \left( \h(\al) \right)$ pour  tout \'el\'ement non nul~$\al$ de~$K$ qui n'est pas une racine de l'unit\'e~; 
si~$d_K$ vaut~$1$ ou~$2$, on peut prendre~$\delta_K$ \'egal \`a~$\frac{\ln 2}{r_K +1}$ ; si~$d_K$ est sup\'erieur ou \'egal \`a $3$, on peut prendre~$\delta_K$ \'egal \`a~$\frac{1}{53d_K\ln(6d_K)}$ ou  $\frac{1}{1201}\left(\frac{\ln \left( \ln d_K  \right) }{\ln d_K}\right)^3$.
\end{itemize}
\end{notations}

\begin{definition}[Borne~$C_1(K) $]\label{def:C1(K)}
On pose~:
$$
C_1(K) = \frac{r_K^{r_K +1} \delta_K^{-(r_K -1)}}{2}.
$$
\end{definition}
On remarque que~$C_1(K)$ peut s'exprimer en n'utilisant que le degr\'e~$d_K$ de~$K$ sur~$\Q$. Avec ces notations, le lemme~$2$ (partie~$3$) de~\cite{[BuGy]} s'\'ecrit de la mani\`ere suivante.
 
\begin{lemme}\label{lem:Yann}
Pour tout \'el\'ement non nul~$\al$ de~$\OK$, il existe une unit\'e~$u$ de~$K$ v\'erifiant~:
$$
\h(u\al) \leq \left| N_{K/\Q}(\al) \right|^{1/d_K} \exp\left( C_1(K)R_K \right).
$$
\end{lemme}

\subsection{Type de r\'eduction en une place hors de $p$ et action de l'inertie en~$p$}

Dans cette partie, on suit le raisonnement de la partie~2 de~\cite{[Mom]} pour \'etablir un lien entre les actions sur le groupe d'isog\'enie du frobenius en une place hors de~$p$ d'une part et
des sous-groupes d'inertie aux places au-dessus de~$p$ d'autre part~; 
ce lien est valide lorsque~$p$ est pris strictement plus grand qu'une borne dont on d\'etermine une forme explicite.

\begin{definition}[Borne~$C_2(K)$]\label{def:C2(K)}
On d\'efinit~:
$$
C_2(K) = \exp \left( 12 d_K C_1(K) R_K \right).
$$
\end{definition}
On remarque que le nombre r\'eel~$C_2(K)$ ne d\'epend que du degr\'e et du r\'egulateur de~$K$. On rappelle (notations de l'introduction) que~$h_K$ d\'esigne le nombre de classes d'id\'eaux de~$K$.

\begin{proposition}\label{prop:hautgen}
 Soit~$\mathfrak{L}$ un id\'eal maximal de~$K$.
Il existe un g\'en\'erateur~$\gamma_{\mathfrak{L}}$ de~$\mathfrak{L}^{h_K}$ satisfaisant pour tout~$\tau$ dans~$G$~:
$$
\left| \tau \left( \cN(\gamma_{\mathfrak{L}}) \right) \right|_{\C} \leq \left(N \mathfrak{L} \right)^{12h_K} C_2(K).
$$
\end{proposition}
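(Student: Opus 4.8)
The plan is to build $\gamma_{\mathfrak{L}}$ by assembling the two preceding lemmas. First I would invoke the finiteness of the class group: since $h_K$ is the class number of $K$, the ideal $\mathfrak{L}^{h_K}$ is principal, so I may choose some generator $\alpha_0 \in \OK$ of $\mathfrak{L}^{h_K}$. Its norm is immediate to control, because the principal ideal $(\alpha_0)$ equals $\mathfrak{L}^{h_K}$, whence
$$
\left| N_{K/\Q}(\alpha_0) \right| = N\left(\mathfrak{L}^{h_K}\right) = (N\mathfrak{L})^{h_K}.
$$

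Next I would apply Lemme~\ref{lem:Yann} to $\alpha_0$ in order to replace it by a better representative of the same ideal: this yields a unit $u$ of $K$ with
$$
\h(u\alpha_0) \leq \left| N_{K/\Q}(\alpha_0) \right|^{1/d_K} \exp\left( C_1(K) R_K \right) = (N\mathfrak{L})^{h_K/d_K} \exp\left( C_1(K) R_K \right).
$$
I would then set $\gamma_{\mathfrak{L}} = u\alpha_0$; since $u$ is a unit, $\gamma_{\mathfrak{L}}$ is still a generator of $\mathfrak{L}^{h_K}$, so it is a legitimate candidate for the statement.

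Finally I would feed $\gamma_{\mathfrak{L}}$ into Lemme~\ref{lem:hautalpha}, which bounds $\left| \tau(\cN(\gamma_{\mathfrak{L}})) \right|_{\C}$ by $\h(\gamma_{\mathfrak{L}})^{12 d_K}$ for every $\tau \in G$. Raising the height bound above to the power $12 d_K$ and using the definition $C_2(K) = \exp\left( 12 d_K C_1(K) R_K \right)$ (Définition~\ref{def:C2(K)}), I obtain
$$
\left| \tau\left( \cN(\gamma_{\mathfrak{L}}) \right) \right|_{\C} \leq (N\mathfrak{L})^{12 h_K} \exp\left( 12 d_K C_1(K) R_K \right) = (N\mathfrak{L})^{12 h_K} C_2(K),
$$
which is exactly the asserted inequality. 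I do not expect any genuine obstacle: the argument is a bookkeeping assembly of Lemme~\ref{lem:Yann} and Lemme~\ref{lem:hautalpha}, and the only points demanding a moment of care are the norm identity $\left| N_{K/\Q}(\alpha_0) \right| = (N\mathfrak{L})^{h_K}$ and the observation that multiplying $\alpha_0$ by a unit leaves the generated ideal unchanged while the chosen exponents make the final bound come out precisely as $(N\mathfrak{L})^{12 h_K} C_2(K)$.
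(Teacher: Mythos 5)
Your proof is correct and follows essentially the same route as the paper: take any generator of $\mathfrak{L}^{h_K}$, adjust it by the unit supplied by Lemme~\ref{lem:Yann} (noting that $\left| N_{K/\Q} \right|$ of a generator equals $(N\mathfrak{L})^{h_K}$), then apply Lemme~\ref{lem:hautalpha} and the definition of $C_2(K)$ to conclude. The only cosmetic difference is that you record the norm identity before invoking Lemme~\ref{lem:Yann} while the paper records it afterwards for $\gamma_{\mathfrak{L}}$ itself; these are equivalent since multiplication by a unit does not change the absolute value of the norm.
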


\begin{proof}
\'Etant donn\'e un g\'en\'erateur quelconque de~$\mathfrak{L}^{h_K}$, on peut le multiplier par une unit\'e donn\'ee par le lemme~\ref{lem:Yann} pour obtenir un g\'en\'erateur~$\gamma_{\mathfrak{L}}$ qui v\'erifie~:
$$
\h(\gamma_{\mathfrak{L}}) \leq \left| N_{K/\Q}(\gamma_{\mathfrak{L}}) \right|^{1/d_K} \exp\left( C_1(K)R_K \right).
$$
Comme~$\gamma_{\mathfrak{L}}$ engendre dans~$\OK$ l'id\'eal~$\mathfrak{L}^{h_K}$, la norme de~$\gamma_{\mathfrak{L}}$ dans l'extension~$K/\Q$ est un entier relatif \'egal ou oppos\'e \`a~$(N\mathfrak{L})^{h_K}$.
D'apr\`es le lemme \ref{lem:hautalpha}, on a pour tout~$\tau$ dans~$G$~:
$$
\left|\tau\left( \cN(\gamma_{\mathfrak{L}}) \right)\right|_{\C} \leq \h(\gamma_{\mathfrak{L}})^{12d_K},
$$
d'o\`u
$$
\begin{array}{r c l}
\left|\tau\left( \cN(\gamma_{\mathfrak{L}}) \right)\right|_{\C} & \leq & \left| N_{K/\Q}(\gamma_{\mathfrak{L}}) \right|^{12} \left(\exp\left(C_1(K)R_K \right)\right)^{12d_K} \\
                                                              & \leq & \left(N\mathfrak{L} \right)^{12h_K} \exp \left(12d_KC_1(K)R_K \right) =  \left(N\mathfrak{L} \right)^{12h_K} C_2(K). \\
\end{array}
$$
\end{proof}

\begin{notations}\label{not:poq}
 Soit~$\q$ un id\'eal maximal de~$K$ premier \`a~$p$ en lequel~$E$ a potentiellement bonne r\'eduction.
\begin{enumerate}
\item On fixe un id\'eal~$\poq$ de~$K\Lq$ au-dessus de~$\po$ (voir les notations~\ref{not:Lq} et~\ref{not:po}).
Comme le corps~$\Lq$ est soit~$\Q$, soit un corps quadratique imaginaire, il y a au plus deux choix possibles pour~$\poq$~;
 si~$\Lq$ est inclus dans~$K$, le seul choix possible pour~$\poq$ est~$\poq = \po$.
\item On note~$\Poq$ l'unique id\'eal de~$\Lq$ situ\'e au-dessous de~$\poq$.
\item D'apr\`es la proposition~\ref{prop:frobhorspbon}, il existe une racine du polyn\^ome~$\Pq(X)$ dont la classe modulo~$\Poq$ (qui est dans~$\Fp$) vaut~$\la(\sq)$~; on note~$\bq$ une telle racine.
\end{enumerate}
\end{notations}
 
\begin{definition}[Borne~$C(K , n)$]\label{def:C(K,n)}
Pour tout entier~$n$, on pose~:
$$
C(K , n) =  \left( n^{12h_K} C_2(K) + n^{6h_K} \right)^{2d_K}.
$$
\end{definition}

\begin{proposition}\label{prop:egaglob}
Soient~$\q$ un id\'eal maximal de~$K$ premier \`a~$p$ et~$\gq$ un g\'en\'erateur de~$\q^{h_K}$ v\'erifiant l'in\'egalit\'e de la proposition~\ref{prop:hautgen}.
On suppose que~$p$ est strictement sup\'erieur \`a~$C ( K, N\q )$.
Alors on est dans l'un des trois cas suivants (avec les notations~\ref{not:po} et~\ref{not:poq})~:
 \begin{center}
 \renewcommand\arraystretch{1.5}
\begin{tabular}{| c | c | c | c |} \hline 
Type de r\'eduction semi-stable de~$E$ en~$\q$ & $\mu(\sq)$ & $\cN(\gq)$  & Cas\\ \hline 
\multirow{2}{*}{multiplicatif}  & $1 \Mod p$ & $\cN(\gq) = 1$ & M0 \\  \cline{2-4} 
    & $(N\q)^{12} \Mod p$  & $\cN(\gq) = (N\q)^{12h_K}$ &  M1\\  \cline{2-4} \hline
  bon  & $\bq^{12} \Mod \Poq$ & $\cN(\gq) = \bq^{12h_K}$ & B \\ \hline 
  \end{tabular}
  \end{center} 
\end{proposition}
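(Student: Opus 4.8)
Le plan est d'appliquer la loi de r\'eciprocit\'e globale (proposition~\ref{prop:recmuglob}) au g\'en\'erateur~$\gq$, puis de relever la congruence obtenue modulo une place au-dessus de~$p$ en une \'egalit\'e exacte d'entiers alg\'ebriques, au moyen d'une majoration archim\'edienne.

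D'abord, j'appliquerais la proposition~\ref{prop:recmuglob} \`a~$\al = \gq$. Comme~$\gq$ engendre l'id\'eal~$\q^{h_K}$, premier \`a~$p$, sa d\'ecomposition ne fait intervenir que~$\q$, avec~$\mathrm{val}_{\q}(\gq) = h_K$ et~$\mathrm{val}_{\q'}(\gq) = 0$ pour~$\q' \neq \q$. La formule se r\'eduit donc \`a
$$
\mu(\sq)^{h_K} = \iota_{\po}\left( \cN(\gq) \right) \Mod \po.
$$
Il reste alors \`a expliciter~$\mu(\sq) = \la^{12}(\sq)$ selon le type de r\'eduction. Dans le cas multiplicatif, la proposition~\ref{prop:ramfrobhorspmult} donne~$\la^2(\sq) \in \{1 , (N\q)^2\}$, d'o\`u~$\mu(\sq) \in \{ 1 , (N\q)^{12} \}$~: ce sont les cas~M0 et~M1, et la congruence ci-dessus fournit respectivement~$\cN(\gq) \equiv 1$ et~$\cN(\gq) \equiv (N\q)^{12h_K}$ modulo~$\po$. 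Dans le cas de bonne r\'eduction, la proposition~\ref{prop:frobhorspbon} et les notations~\ref{not:poq} donnent~$\la(\sq) = \bq \Mod \Poq$, donc~$\mu(\sq) = \bq^{12} \Mod \Poq$~; en relevant \`a l'id\'eal~$\poq$ de~$K\Lq$ fix\'e au-dessus de~$\po$, on obtient~$\cN(\gq) \equiv \bq^{12 h_K} \Mod \poq$ (cas~B).

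Le c\oe{}ur de la d\'emonstration est le passage de ces congruences aux \'egalit\'es du tableau. Dans chaque cas je noterais~$X$ l'entier alg\'ebrique candidat ($1$, $(N\q)^{12 h_K}$ ou~$\bq^{12 h_K}$) et~$F$ le corps ad\'equat ($K$ dans les cas multiplicatifs,~$K\Lq$ dans le cas de bonne r\'eduction), muni de l'id\'eal premier~$\mathfrak{P}$ au-dessus de~$p$ d\'ej\`a choisi. Si~$\cN(\gq) \neq X$, alors~$\cN(\gq) - X$ est un entier alg\'ebrique non nul de~$F$ divisible par~$\mathfrak{P}$~; sa norme~$N_{F/\Q}(\cN(\gq) - X)$ est donc un entier rationnel non nul multiple de~$p$, de sorte que~$\left| N_{F/\Q}(\cN(\gq) - X) \right| \geq p$. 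Or la proposition~\ref{prop:hautgen} majore chaque conjugu\'e complexe de~$\cN(\gq)$ par~$(N\q)^{12 h_K} C_2(K)$, tandis que ceux de~$X$ sont major\'es par~$(N\q)^{6h_K}$ dans le cas~B (les racines de~$\Pq$ ayant pour valeur absolue complexe~$\sqrt{N\q}$ d'apr\`es la proposition~\ref{prop:frobhorspbonST}), par~$1$ dans le cas~M0 et par~$(N\q)^{12 h_K}$ dans le cas~M1. En majorant le produit sur les~$[F : \Q] \leq 2 d_K$ plongements par l'in\'egalit\'e triangulaire, j'aboutirais \`a~$\left| N_{F/\Q}(\cN(\gq) - X) \right| \leq C(K , N\q) < p$, une contradiction~; d'o\`u~$\cN(\gq) = X$.

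La principale difficult\'e r\'eside dans cette comptabilit\'e de la norme, qu'il faut mener cas par cas pour v\'erifier que le produit des bornes reste inf\'erieur \`a~$C(K , N\q) = \left( (N\q)^{12 h_K} C_2(K) + (N\q)^{6 h_K} \right)^{2 d_K}$. C'est le cas de bonne r\'eduction qui dicte la forme exacte de~$C(K , n)$~: le degr\'e~$2 d_K$ du corps~$K\Lq$ et le terme~$n^{6 h_K}$ provenant de~$|\bq| = \sqrt{N\q}$. Le cas~M1, o\`u le candidat~$X = (N\q)^{12 h_K}$ est plus grand, demande une br\`eve v\'erification montrant que la borne~$\left( (N\q)^{12 h_K}(C_2(K) + 1) \right)^{d_K}$ reste inf\'erieure \`a~$C(K , N\q)$, ce qui r\'esulte de~$N\q \geq 2$, de~$h_K \geq 1$ et de~$C_2(K) \geq 1$.
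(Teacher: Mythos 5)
Votre d\'emonstration est correcte et suit essentiellement la m\^eme d\'emarche que celle du texte~: application de la proposition~\ref{prop:recmuglob} \`a~$\gq$, identification de~$\mu(\sq)$ selon le type de r\'eduction (propositions~\ref{prop:ramfrobhorspmult} et~\ref{prop:frobhorspbon}), puis passage de la congruence modulo~$\po$ (ou~$\poq$) \`a une \'egalit\'e exacte gr\^ace \`a la majoration de la proposition~\ref{prop:hautgen}, \`a l'in\'egalit\'e triangulaire et \`a l'hypoth\`ese~$p > C(K , N\q)$. Vos seules variantes sont de pr\'esentation~: le traitement unifi\'e des trois cas via~$(X , F , \mathfrak{P})$ l\`a o\`u le texte les r\'edige s\'epar\'ement, et la v\'erification explicite, dans le cas~M1, de l'in\'egalit\'e~$\left( (N\q)^{12h_K}(C_2(K)+1) \right)^{d_K} \leq C(K , N\q)$, que le texte utilise sans commentaire.
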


\begin{proof}
Comme~$\gq$ engendre dans~$\OK$ l'id\'eal~$\q^{h_K}$, la proposition~\ref{prop:recmuglob} appliqu\'ee \`a~$\gq$ donne~:
$$
\mu(\sq )^{h_K} = \iota_{\po} \left( \cN (\gq) \right) \Mod \po
$$
On raisonne ensuite selon le type de r\'eduction semi-stable de~$E$ en~$\q$ et les valeurs possibles de~$\mu(\sq)$ d\'etermin\'ees dans la partie~\ref{ssec:ramfrobhorsp}.

On suppose d'abord que~$E$ a potentiellement mauvaise r\'eduction multiplicative en~$\q$ et que~$\la^2 (\sq)$ est \'egal \`a~$1$ modulo~$p$.
 Alors~$\mu (\sq)$ est aussi \'egal \`a~$1$ modulo~$p$ et l'\'el\'ement~$\cN(\gq)$ de~$\OK$ est congru \`a $1$ modulo~$\po$. 
  Comme~$\po$ est au-dessus de~$p$, ceci implique que~$p$ divise l'entier relatif~$N_{K/\Q}\left(\cN(\gq) - 1\right)$. 
  Par choix de~$\gq$, on a pour tout~$\tau$ dans~$G$~:
$$
\left| \tau\left( \cN(\gq) - 1 \right) \right|_{\C}  \leq  \left|\tau\left( \cN(\gq) \right) \right|_{\C} + \left|\tau\left( 1 \right) \right|_{\C} 
 							            \leq  (N\q)^{12h_K}C_2(K) + 1,
$$
d'o\`u
$$
\left|N_{K/\Q}\left(\cN(\gq) - 1 \right)\right|_{\Q}    =   \prod\limits_{\tau \in G}  \left| \tau\left( \cN(\gq) - 1 \right) \right|_{\C} 
									 \leq  \left( (N\q)^{12h_K}C_2(K) + 1 \right)^{d_K}
									 \leq  C(K,N\q).
$$
Comme $p$ est suppos\'e strictement plus grand que $C(K,N\q)$, on en d\'eduit que l'entier~$N_{K/\Q}\left(\cN(\gq) - 1 \right)$ est nul, ce qui implique que~$\cN(\gq)$ est \'egal \`a~$1$.

On suppose ensuite que~$E$ a potentiellement mauvaise r\'eduction multiplicative en~$\q$ et que~$\la^2 (\sq)$ est \'egal \`a~$(N\q)^2$ modulo~$p$.
Alors~$\mu (\sq)$ est  \'egal \`a~$(N\q)^{12}$ modulo~$p$ et l'\'el\'ement~$\cN(\gq)$ de~$\OK$ est congru \`a $(N\q)^{12h_K}$ modulo~$\po$. 
  Ainsi,~$p$ divise l'entier relatif~$N_{K/\Q}\left(\cN(\gq) - (N\q)^{12h_K}\right)$. 
Or on a :
$$
\begin{array}{r c l}
\left|N_{K/\Q}\left(\cN(\gq) - (N\q)^{12h_K} \right)\right|_{\Q} & = & \prod\limits_{\tau \in G} \left| \tau\left( \cN(\gq) - (N\q)^{12h_K} \right) \right|_{\C} \\
									& \leq & \prod\limits_{\tau \in G} \left( (N\q)^{12h_K}C_2(K) + (N\q)^{12h_K} \right) \\
									& \leq & \left( (N\q)^{12h_K}C_2(K) + (N\q)^{12h_K} \right)^{d_K} \\
									& \leq & C(K,N\q).\\
\end{array}
$$
Comme $p$ est suppos\'e strictement plus grand que $C(K,N\q)$, on en d\'eduit que l'entier~$N_{K/\Q}\left(\cN(\gq) - (N\q)^{12h_K} \right)$ est nul, ce qui implique que~$\cN(\gq)$ est \'egal \`a~$(N\q)^{12h_K}$.

On suppose enfin que~$E$ a potentiellement bonne r\'eduction en~$\q$.
Alors~$\mu(\sq)$ est \'egal \`a la classe de~$\bq^{12}$ modulo~$\Poq$ (notations~\ref{not:poq}) et~$\cN(\gq)$ et~$\bq^{12h_K}$ sont congrus modulo l'id\'eal~$\poq$ de~$K\Lq$.
Ceci implique que~$p$ divise l'entier relatif~$N_{K\Lq/\Q}\left(\cN(\gq) - \bq^{12h}\right)$.

Comme~$\Lq$ est soit~$\Q$ soit un corps quadratique imaginaire, le corps~$K\Lq$ est galoisien sur~$\Q$, de degr\'e \'egal \`a~$d_K$ ou~$2d_K$.
 Tout  \'el\'ement~$\tau$ de~$\Gal(K\Lq/\Q)$ induit sur~$K$ un \'el\'ement de~$\Gal(K/\Q)$  et sur~$\Lq$ un \'el\'ement de~$\Gal(\Lq/\Q)$, qui est donc soit l'identit\'e soit la conjugaison complexe. 
On a ainsi pour tout~$\tau$ dans~$\Gal(K\Lq/\Q)$~:
$$\begin{array}{r c l}
\left|\tau \left( \cN(\gq) - \bq^{12h_K} \right) \right|_{\C} & \leq & \left|\tau_{|K}\left(\cN(\gq)\right) \right|_{\C} + \left|\tau_{|\Lq}\left(\bq^{12h_K}\right) \right|_{\C}\\
                                    & \leq & (N\q)^{12h_K}C_2(K) + \left(\sqrt{N\q}\right)^{12h_K}\\
				    & \leq &  (N\q)^{12h_K}C_2(K) + (N\q)^{6h_K}.\\
\end{array}$$
D'o\`u finalement~:
$$
\begin{array}{r c l}
\left|N_{K\Lq/\Q}\left(\cN(\gq) - \bq^{12h_K}\right)\right|_{\Q} & = &\prod\limits_{\tau \in \Gal(K\Lq/\Q)} \left|\tau\left(\cN(\gq) - \bq^{12h_K}\right) \right|_{\C} \\
                                                                    & \leq & \left((N\q)^{12h_K}C_2(K) + (N\q)^{6h_K}\right)^{2d_K} = C(K , N\q).\\
								    
\end{array}$$
Comme~$p$ est suppos\'e strictement plus grand que~$C(K , N \q)$, on en d\'eduit que l'entier~$N_{K\Lq/\Q}\left(\cN(\gq) - \bq^{12h_K} \right)$ est nul, ce qui implique que~$\cN(\gq)$ est \'egal \`a~$\bq^{12h_K}$.
\end{proof}

La proposition suivante donne une forme effective du lemme~2 de la partie~2 de~\cite{[Mom]}.

\begin{proposition}\label{prop:compat}
Soient~$q$ un nombre premier rationnel totalement d\'ecompos\'e dans~$K$ et~$\q$ un id\'eal premier de~$K$ au-dessus de~$q$.
On suppose que~$p$ est strictement plus grand que $C(K, q)$.
Alors on est dans l'un des cinq cas suivants (avec les notations~\ref{not:poq})~:
 \begin{center}
 \renewcommand\arraystretch{1.5}
 \noindent
 \begin{small} 
 \begin{tabular}{|cc|c|c|c|c|} \hline
  \multicolumn{2}{|c|}{Type de r\'eduction} & \multirow{2}{*}{Corps $\Lq$} & \multirow{2}{*}{$\mu(\sq)$} & \multirow{2}{*}{Famille $(\ato)_{\tau \in G}$}& \multirow{2}{*}{Cas} \\ 
  \multicolumn{2}{|c|}{semi-stable de~$E$ en~$\q$} & & & & \\ \hline 
  \multicolumn{2}{|c|}{\multirow{2}{*}{multiplicatif}} & \multirow{2}{*}{} & $1 \mod p$  & $\forall \tau \in G, \ato = 0$ &  M0  \\ \cline{4-6} 
  & & & $q^{12} \mod p$ & $\forall \tau \in G, \ato = 12$  & M1 \\  \hline
  \multicolumn{1}{|c|}{\multirow{5}{*}{bon}} & supersingulier & $\Lq = \Q(\sqrt{-q})$ & $\bq^6 = - q^3$ & $\forall \tau \in G, \ato = 6$ & BS \\ \cline{2-6}
   & \multicolumn{1}{|c|}{\multirow{4}{*}{ordinaire}} & \multirow{1}{*}{quadratique,} & \multirow{2}{*}{$ N_{K/\Lq}(\q) = \bq\OLq$} & $\forall \tau \in \Gal(K/\Lq), \ato = 12$ & \multirow{2}{*}{BO} \\  
   & \multicolumn{1}{|c|}{} & inclus dans~$K$, & & et $\ato = 0$ sinon & \\ \cline{4-6} 
  & \multicolumn{1}{|c|}{}& \multirow{1}{*}{$p$ d\'ecompos\'e} & \multirow{2}{*}{$ N_{K/\Lq}(\q) = \bqb\OLq$} & $\forall \tau \in \Gal(K/\Lq), \ato = 0$ & \multirow{2}{*}{BO'} \\ 
  & \multicolumn{1}{|c|}{} & dans $\Lq$ & & et $\ato = 12$ sinon & \\  \hline 
  \end{tabular} 
  \end{small}
  \end{center}
 \end{proposition}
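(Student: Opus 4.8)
The plan is to feed the hypothesis into Proposition~\ref{prop:egaglob} and then read off the family $(\ato)_{\tau\in G}$ by comparing two factorizations of the principal ideal $\cN(\gq)\OK$. First I fix a generator $\gq$ of $\q^{h_K}$ satisfying the inequality of Proposition~\ref{prop:hautgen}; since $q$ is totally split in $K$ one has $N\q=q$, so the hypothesis $p>C(K,q)$ is exactly $p>C(K,N\q)$ and Proposition~\ref{prop:egaglob} applies, putting us in one of the cases M0, M1 or B with $\cN(\gq)$ equal to $1$, to $(N\q)^{12h_K}=q^{12h_K}$, or to $\bq^{12h_K}$ respectively (with $\bq$ the root fixed in Notations~\ref{not:poq}). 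The arithmetic input I shall use repeatedly is that, $q$ being totally split, the $d_K$ primes of $K$ above $q$ are exactly the distinct ideals $\q_\tau:=\tau(\q)$, $\tau\in G$, each of residue degree and ramification index $1$; consequently $v_{\q_\tau}(\cN(\gq))=h_K\,\ato$ for every $\tau$, because $\tau'(\gq)$ generates $\q_{\tau'}^{h_K}$.

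The multiplicative and supersingular cases are then settled by matching exponents. In case M0 the element $\cN(\gq)=1$ is a unit, so $h_K\ato=0$ and $\ato=0$ for all $\tau$. In case M1 the identity $q\OK=\prod_\tau\q_\tau$ gives $\cN(\gq)\OK=q^{12h_K}\OK=\prod_\tau\q_\tau^{12h_K}$, whence $\ato=12$ for all $\tau$. For case B, Remark~\ref{rem:pdsLq}(1) shows that $\Lq$ is imaginary quadratic, the discriminant $\Tq^2-4q$ being nonzero since $q$ is not a square. If moreover $E$ has potentially good supersingular reduction at $\q$, then $q\mid\Tq$ by Remark~\ref{rem:pdsLq}(2); for $q\ge 5$ the bound $|\Tq|\le 2\sqrt q<q$ forces $\Tq=0$, so $\Lq=\Q(\sqrt{-q})$, $\bq^2=-q$ and $\bq^6=-q^3$, hence $\cN(\gq)=\bq^{12h_K}=q^{6h_K}\in\Q$, and comparison with $\prod_\tau\q_\tau^{h_K\ato}$ yields $\ato=6$ for all $\tau$ (case BS). The finitely many pairs with $q\in\{2,3\}$ are checked by hand ($q=3$ still gives $\Lq=\Q(\sqrt{-3})$ and $\bq^6=-q^3$, and $q=2$ again produces $\ato=6$).

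The heart of the argument is the ordinary case $q\nmid\Tq$, where I must first prove the descent $\Lq\subseteq K$. Suppose not; then $[K\Lq:K]=2$ and the nontrivial element $\rho$ of $\Gal(K\Lq/K)$ acts on $\Lq$ as complex conjugation, so $\rho(\bq)=\bqb$. Since $\cN(\gq)=\bq^{12h_K}$ lies in $K$, it is fixed by $\rho$, giving $\bq^{12h_K}=\bqb^{12h_K}$, that is, $\bq/\bqb=\bq^2/q$ is a root of unity of $\Lq$. The roots of unity of an imaginary quadratic field are just $\pm1$, except the fourth and sixth roots when $\Lq=\Q(i)$ or $\Q(\sqrt{-3})$; running through these possibilities and imposing $\bq+\bqb=\Tq\in\Z$ forces $q\mid\Tq$ in every case (the value $\Tq=0$ included), contradicting $q\nmid\Tq$. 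Hence $\Lq\subseteq K$, so $K\Lq=K$ and $\poq=\po$. That $p$ splits in $\Lq$ then follows from Remark~\ref{rem:pdsLq}(3), because $p$ cannot divide the nonzero integer $\Tq^2-4q$, whose absolute value is bounded by $4q<p$.

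Finally, with $\Lq\subseteq K$ the prime $q$ splits in $\Lq$ as $\bq\OLq\cdot\bqb\OLq$ (two distinct primes of norm $q$, since $\bq\bqb=q$ is prime), and $\q$ lies above exactly one of them: $N_{K/\Lq}(\q)=\bq\OLq$ gives case BO and $N_{K/\Lq}(\q)=\bqb\OLq$ case BO'. In the former, every prime of $K$ above $\bq\OLq$ is of the form $\sigma(\q)$ with $\sigma\in\Gal(K/\Lq)$, the stabiliser being trivial as $q$ is totally split, so $\bq\OK=\prod_{\sigma\in\Gal(K/\Lq)}\sigma(\q)$ and $\cN(\gq)\OK=\bq^{12h_K}\OK=\prod_{\sigma\in\Gal(K/\Lq)}\sigma(\q)^{12h_K}$; comparison with $\prod_\tau\q_\tau^{h_K\ato}$ gives $\ato=12$ on $\Gal(K/\Lq)$ and $\ato=0$ elsewhere. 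Case BO' is symmetric, the primes above $\bq\OLq$ now being the $\tau(\q)$ with $\tau\notin\Gal(K/\Lq)$. The main obstacle is precisely the descent $\Lq\subseteq K$: all the rest is bookkeeping of valuations at the primes above $q$, but that step genuinely needs the global relation $\cN(\gq)=\bq^{12h_K}$ together with the finiteness of roots of unity, and it is where the exclusion of the regime $q\mid\Tq$ (supersingular) is used.
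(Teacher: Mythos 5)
Your reorganization of case B — splitting by reduction type (supersingular $q\mid\Tq$ versus ordinary $q\nmid\Tq$) instead of, as the paper does, by whether $\cN(\gq)$ is rational or generates $\Lq\subseteq K$ — is legitimate, and your ordinary case is sound: the descent $\Lq\subseteq K$ via the root-of-unity computation, the splitting of $p$ in $\Lq$ via Remark~\ref{rem:pdsLq}, and the valuation comparison yielding the family $(\ato)_{\tau\in G}$ all match the paper's argument in substance. The genuine gap is in your supersingular case at $q=2$. There $q\mid\Tq$ allows $\Tq\in\{0,\pm2\}$, and your parenthetical ``$q=2$ again produces $\ato=6$'' does not dispose of the subcase $\Tq=\pm2$: in that subcase the discriminant is $\Tq^2-4q=-4$, so $\Lq=\Q(i)$, $\bq=\pm1+i$, and $\bq^6=\mp8i$. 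It is true that $\bq^{12}=-64$, so $\cN(\gq)=(-64)^{h_K}$ still generates $2^{6h_K}\OK$ and the ideal comparison still gives $\ato=6$ for all $\tau$; but the conclusions of row BS, namely $\Lq=\Q(\sqrt{-q})$ and $\bq^6=-q^3$, are \emph{false} in this subcase. So the subcase cannot be ``checked by hand'' into row BS; it must be proved impossible, and your proof contains no mechanism for doing so.

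The exclusion is exactly the point where the paper brings in the local theory at $p$, which your argument never invokes. From $\ato=6$ for all $\tau$, Proposition~\ref{prop:inpbon} forces, at each prime $\p$ of $K$ above $p$, the pair $(\ep,\rp)=(4,2)$ and hence $p\equiv 3\pmod 4$; on the other hand, since $\phiEp$ is reducible, the characteristic polynomial of $\phiEp(\sq)$ splits over $\Fp$, so $\Tq^2-4q=-4$ must be a square modulo $p$ (Remark~\ref{rem:pdsLq}, point~3), which means $p\equiv 1\pmod 4$ — a contradiction, so $\Tq=\pm2$ cannot occur. Without this step (or a substitute), the BS row of the table is unproved for $q=2$ and the proposition is not established in full generality. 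Your $q=3$ check, by contrast, is complete: $\Tq=\pm3$ does give $\Lq=\Q(\sqrt{-3})$ and $\bq^6=-27=-q^3$, so it genuinely lands in row BS.
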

 
 \begin{proof}
Comme le nombre premier~$q$ est suppos\'e totalement d\'ecompos\'e dans l'extension galoisienne~$K/\Q$, les id\'eaux~$\tau(\q)$ sont deux \`a deux distincts lorsque~$\tau$ d\'ecrit~$G$, leur produit est l'id\'eal~$q\OK$ 
et la norme de~$\q$ dans~$K / \Q$ est \'egale \`a~$q$.
En particulier, en supposant que~$p$ est strictement sup\'erieur \`a~$C(K, q)$, on se place dans les hypoth\`eses de la proposition~\ref{prop:egaglob}.
On raisonne donc selon les trois cas de cette proposition.

 On remarque que l'\'el\'ement~$\cN(\gq)$ engendre dans~$\OK$ l'id\'eal
$$
\begin{array}{r c c c l }
  \cN(\gq) \OK &  = & \left(\prod\limits_{\tau \in G} \tau(\gq)^{\ato}\right) \OK  &  =  & \prod\limits_{\tau \in G} \left(\tau\left(\gq\OK\right)\right)^{\ato} \\ \\
  			&  = & \prod\limits_{\tau \in G} \left( \tau \left( \q^{h_K} \right) \right)^{\ato} &  = & \left( \prod\limits_{\tau \in G} \tau \left(  \q \right)^{\ato} \right)^{h_K}.\\
 \end{array}
  $$

Lorsque~$\q$ est de type M0,~$\cN(\gq)$ est \'egal \`a~$1$ donc engendre dans~$\OK$ l'id\'eal~$\OK$ lui-m\^eme. L'\'egalit\'e
$$
\OK = \left(\prod\limits_{\tau \in G} \tau\left( \q \right)^{\ato} \right)^{h_K}
$$
avec $h_K$ dans~$\mathbb{N}^{*}$ implique alors que pour tout~$\tau$ dans~$G$,~$\ato$ est nul.
 
 Lorsque~$\q$ est de type M1,~$\cN(\gq)$ est \'egal \`a~$q^{12h_K}$ et on a~:
 $$
 \left(\prod\limits_{\tau \in G} \tau\left( \q \right)^{\ato} \right)^{h_K}   =  \cN(\gq) \OK    =   \left(q^{12h_K}\right) \OK  =   \left(q \OK \right) ^{12h_K}  =   \left(\prod\limits_{\tau \in G} \tau\left( \q \right) \right)^{12h_K}.
$$
 On en d\'eduit que pour tout~$\tau$ dans~$G$,~$\ato$ est \'egal \`a~$12$.

Lorsque~$\q$ est de type B,~$\cN(\gq)$ est \'egal \`a $\bq^{12h_K}$ (dans le corps $K\Lq$).
 En particulier, l'\'el\'ement~$\cN(\gq)$ de~$K$ est contenu dans~$\Lq$ et deux sous-cas sont possibles~:
  soit~$\cN(\gq)$ est rationnel, soit~$\cN(\gq)$  engendre~$\Lq$ qui est alors contenu dans~$K$.

On suppose d'abord que~$\cN(\gq)$ est rationnel.
Alors~$\bq^{12h_K}$ est \'egalement rationnel, donc~$\bq^{12h_K}$ est \'egal \`a $\bqb^{12h_K}$.
Ceci implique qu'il existe une racine~$12h_K$-i\`eme de l'unit\'e~$\zeta$ dans~$\Lq$ telle que~$\bqb$ est \'egal \`a~$\zeta\bq$.
Comme~$\Lq$ est soit~$\Q$ soit un corps quadratique imaginaire,~$\zeta$ est en fait une racine deuxi\`eme, quatri\`eme ou sixi\`eme de l'unit\'e.
 Ceci implique que~$\bq^{12}$ est d\'ej\`a rationnel~; comme~$\bq$ est un entier alg\'ebrique,~$\bq^{12}$ est un entier relatif~; sa valeur absolue \'etant~$q^6$, on a~$\bq^{12} = \pm q^6$.
 On en d\'eduit qu'on a dans~$\OK$~:
$$
 \left(\prod\limits_{\tau \in G} \tau\left( \q \right)^{\ato} \right)^{h_K}    =     \cN(\gq) \OK  =  \bq^{12h_K} \OK  =  q^{6h_K}\OK = \left(\prod\limits_{\tau \in G} \tau\left( \q \right) \right)^{6h_K}.
  $$  
 Ceci implique que pour tout~$\tau$ dans~$G$, $\ato$ est \'egal \`a~$6$. D'apr\`es la proposition~\ref{prop:inpbon}, le nombre premier~$p$ est donc congru \`a~$3$ modulo~$4$.
 
 On va maintenant montrer que~$E$ a potentiellement r\'eduction supersinguli\`ere en~$\q$, que le corps~$\Lq$ est~$\Q(\sqrt{-q})$ et qu'on a~$\bq^{6} = - q^3$.

La relation
$$
q = N \q =  \bqb \bq =  \zeta\bq^2.
$$
implique dans $\OLq$
$$
q \OLq = \left(\bq^2\right)\OLq = \left( \bq \OLq \right)^2.
$$
Ainsi,~$q$ est ramifi\'e dans~$\Lq$ et l'unique id\'eal premier de~$\Lq$ au-dessus de~$q$ est~$\bq \OLq$ (ceci force notamment~$\Lq$ \`a \^etre diff\'erent de~$\Q$).

On traite d'abord le cas~$q = 2$.
La trace~$\Tq$ (voir notations~\ref{not:Lq}) est alors un entier relatif de valeur absolue inf\'erieure \`a~$2\sqrt{2}$~; il vaut donc~$0$,~$1$,~$2$,~$-1$ ou~$-2$.
Les valeurs correspondantes du discriminant~$\Tq^2 - 4N\q$ du polyn\^ome~$\Pq(X)$ sont~$-8$,~$-7$,~$-4$,~$-7$,~$-4$.
Les entiers relatifs sans facteurs carr\'es dont une racine carr\'ee engendre~$\Lq$ sont alors respectivement~$-2$,~$-7$,~$-1$,~$-7$,~$-1$.
 Or,~$2$ est ramifi\'e dans~$\Lq$ si et seulement si ce dernier entier est congru \`a~$2$ ou~$3$ modulo~$4$. Les valeurs de~$\Tq$ qui r\'ealisent cette condition sont~$0$,~$2$ et~$-2$.
Or d'apr\`es la remarque~\ref{rem:pdsLq}, l'entier~$\Tq^2 - 4N\q$ est un carr\'e modulo~$p$.
Si~$\Tq$ est \'egal \`a~$2$ ou~$-2$, le discriminant~$\Tq^2 - 4N\q$ est \'egal \`a~$-4$.
Or,~$p$ \'etant congru \`a~$3$ modulo~$4$,~$-4$ n'est pas un carr\'e modulo~$p$.
On en d\'eduit que~$\Tq$ est nul.

Lorsque~$q$ est diff\'erent de~$2$, le fait qu'il soit ramifi\'e dans~$\Lq$ implique qu'il divise le discriminant~$\Tq^2 - 4N\q$.
 Comme~$q$ divise~$N\q$ (et lui est m\^eme ici \'egal), on en d\'eduit que~$q$ divise~$\Tq$.
 Alors~$\Tq^2$ est un carr\'e, multiple de~$q$, compris entre~$0$ et~$4q$. Ceci implique que soit~$\Tq$ est nul, soit~$q$ est \'egal \`a~$3$ et~$\Tq^2$ est \'egal \`a~$9$.

On a ainsi montr\'e~:
\begin{itemize}
 \item[$\bullet$] si~$q$ est diff\'erent de~$3$, alors~$\Tq$ est nul~;
 \item[$\bullet$] si~$q$ est \'egal \`a~$3$, alors~$\Tq^2$ est soit nul, soit \'egal \`a~$9$.
\end{itemize}
Dans les deux cas,~$q$ divise~$\Tq$, donc la courbe elliptique obtenue sur $\overline{\kq}$ par r\'eduction de~$E$ est supersinguli\`ere.

Si la trace~$\Tq$ est nulle, alors le polyn\^ome~$\Pq(X)$ dont~$\bq$ est racine est \'egal \`a~$X^2 + q$.
 On a alors~$\bq^{2}$ \'egal \`a~$-q$, donc~$\bq^6$ est \'egal \`a~$-q^3$ et le corps~$\Lq$ est~$\Q(\sqrt{-q})$.

Si~$q$ est \'egal \`a~$3$ et~$\Tq^2$ est \'egal \`a~$9$, alors~$\Tq^2 - 4N\q$ vaut~$-3$ et il existe~$\varepsilon_{\q}$ valant~$+1$ ou~$-1$ v\'erifiant~:
$$
\bq = \frac{\Tq + i \varepsilon_{\q} \sqrt{3} }{2}.
$$
On a alors encore~$\Lq$ \'egal \`a~$\Q(\sqrt{-3})$ et on v\'erifie par le calcul que~$\bq^6$ est \'egal \`a~$-3^3$.

 On suppose enfin que l'\'el\'ement~$\cN(\gq)$ de~$K$ engendre~$\Lq$ qui est un corps quadratique imaginaire.
Alors~$\Lq$ est inclus dans~$K$, le degr\'e~$d_K$ de~$K$ sur~$\Q$ est pair et le groupe de Galois de~$K$ sur~$\Q$ contient comme sous-groupe d'indice~$2$ le groupe~$\Gal(K/\Lq)$~; on note~$\Hq$ ce sous-groupe.  
 
 Comme~$\cN(\gq)$ est contenu dans~$\Lq$, on a pour tout \'el\'ement~$\rho$ de~$\Hq$~:
 $$
\cN(\gq)   =  \rho\left(\cN(\gq)\right). 
 $$
On en d\'eduit~:
 $$
   \left(\prod\limits_{\tau \in G} \tau(\q)^{\ato}\right)^{h_K} =   \cN(\gq)\OK    =   \rho\left( \cN(\gq) \OK  \right) =   \rho\left( \left(\prod\limits_{\tau \in G} \tau(\q)^{\ato}\right)^{h_K} \right) 
     =  \left(\prod\limits_{\tau \in G} \tau(\q)^{a_{\rho^{-1}\tau}}\right)^{h_K} .
  $$
  On a donc pour tout~$\rho$ dans~$\Hq$ et tout~$\tau$ dans~$G$ l'\'egalit\'e~$a_{\rho\tau} = \ato$.
  Ainsi la valeur de~$\ato$ est constante sur les classes \`a gauche (qui sont aussi les classes \`a droite) de~$G$ modulo~$\Hq$.
 
  Le groupe~$G$ poss\`ede deux classes modulo~$\Hq$~: celle, \'egale \`a~$\Hq$, de l'identit\'e et celle, \'egale \`a~$\Hq\gamma$, d'un \'el\'ement~$\gamma$ de~$G$ qui induit la conjugaison complexe sur~$\Lq$.
  On a alors~: 
  $$
  \begin{array}{r c  l}
  \cN(\gq) &  = & \prod\limits_{\tau \in G} \tau(\gq)^{\ato}\\
		      & = & \left(\prod\limits_{\tau \in \Hq} \tau(\gq)\right)^{a_{id}} \left(\prod\limits_{\tau \in \gamma\Hq} \tau(\gq)\right)^{a_\gamma} \\
		      & = & \left(\prod\limits_{\tau \in \Hq} \tau(\gq)\right)^{a_{id}} \left(\gamma \left(\prod\limits_{\tau \in \Hq} \tau(\gq)\right)\right)^{a_\gamma}\\
		      & = & \left(N_{K/\Lq}(\gq)\right)^{a_{id}} \left(\gamma\left(N_{K/\Lq}(\gq)\right)\right)^{a_\gamma} \\
		      & = & \left(N_{K/\Lq}(\gq)\right)^{a_{id}} \left(\overline{N_{K/\Lq}(\gq)}\right)^{a_\gamma}. \\
  \end{array}
  $$
  Comme~$N_{K/\Lq}(\gq)$ engendre dans~$\OLq$ l'id\'eal
  $$
  N_{K/\Lq}(\gq)\OLq   =  N_{K/\Lq}\left(\gq\OK\right) 
                                                  =  N_{K/\Lq}\left(\q^{h_K}\right) 
                                                  =  N_{K/\Lq}\left(\q\right)^{h_K}, 
  $$
  alors~$\overline{N_{K/\Lq}(\gq)}$ engendre dans~$\OLq$ l'id\'eal~$\overline{N_{K/\Lq}\left(\q\right)}^{h_K}$ et on obtient~:
  $$
  \begin{array}{r c l}
  \left(\bq^{12h_K}\right)\OLq & = & \cN(\gq)\OLq \\
  \left(\bq \OLq \right)^{12h_K}& = &\left(N_{K/\Lq}(\gq)\OLq\right)^{a_{id}} \left(\overline{N_{K/\Lq}(\gq)} \OLq\right)^{a_\gamma} \\
 					     & = &\left(N_{K/\Lq}\left(\q\right)^{h_K}\right)^{a_{id}} \left(\overline{N_{K/\Lq}\left(\q\right)}^{h_K}\right)^{a_\gamma}\\
 				    	     & = & \left(\left(N_{K/\Lq}\left(\q\right)\right)^{a_{id}} \left(\overline{N_{K/\Lq}\left(\q\right)}\right)^{a_\gamma}\right)^{h_K}.\\
  \end{array}
  $$

  Comme on a suppos\'e~$q$ totalement  d\'ecompos\'e dans l'extension~$K/\Q$,~$q$ est \'egalement totalement d\'ecompos\'e dans les extensions~$\Lq/\Q$ et~$K/\Lq$.
   La norme~$N_{K/\Lq}(\q)$ est donc un id\'eal premier de~$\Lq$ au-dessus de~$q$.
   Or l'\'egalit\'e~$q = \bq\bqb$ indique que les deux id\'eaux premiers (distincts) de~$\Lq$ au-dessus de~$q$ sont~$\bq\OLq$ et~$\bqb\OLq$. 
  Ainsi~$N_{K/\Lq}(\q)$  est \'egal soit \`a~$\bq\OLq$ soit \`a~$\bqb\OLq$.
  
  Si~$N_{K/\Lq}(\q)$  est \'egal \`a~$\bq\OLq$,  alors la relation
  $$
  \left(\bq \OLq \right)^{12h_K} = \left(\left( \bq\OLq \right)^{a_{id}} \left( \bqb\OLq \right)^{a_\gamma} \right)^{h_K}
  $$
   implique~$a_{id} = 12$ et~$a_\gamma = 0$. On en d\'eduit que~$\ato$ vaut $12$ si~$\tau$ est dans~$\Hq$ et~$0$ sinon.
  
  Si~$N_{K/\Lq}(\q)$  est \'egal \`a~$\bqb\OLq$,  alors la relation
  $$
  \left(\bq \OLq \right)^{12h_K} = \left(\left( \bqb\OLq \right)^{a_{id}} \left( \bq\OLq \right)^{a_\gamma} \right)^{h_K}
  $$
   implique~$a_{id} = 0$ et~$a_\gamma = 12$. On en d\'eduit que~$\ato$ vaut~$0$ si~$\tau$ est dans~$\Hq$ et~$12$ sinon.
   
   D'apr\`es la remarque~\ref{rem:pdsLq}, soit~$p$ est d\'ecompos\'e dans~$\Lq$, soit~$p$ divise~$\Tq^2 - 4 q$.
    Supposons par l'absurde que~$p$ divise~$\Tq^2 - 4 q$~; on remarque que la valeur absolue de~$\Tq^2 - 4 q$ est~$4 q - \Tq^2 $, elle donc inf\'erieure ou \'egale \`a~$4q$.
    Comme~$p$ est suppos\'e strictement sup\'erieur \`a~$C(K , q)$,~$p$ est strictement sup\'erieur \`a~$4q$. On obtient alors que~$\Tq^2 - 4 q$ est nul, ce qui est impossible.
    On en d\'eduit que~$p$ est d\'ecompos\'e dans le corps quadratique~$\Lq$.
    
  Enfin, on suppose par l'absurde que~$E$ a potentiellement r\'eduction supersinguli\`ere en~$\q$~; 
  alors d'apr\`es la remarque~\ref{rem:pdsLq}, on est dans l'un des cas suivants~:~$\Tq$ est nul~;~$q$ est \'egal \`a~$2$ et~$\Tq$ est \'egal \`a~$0$,~$2$ ou~$-2$~;~$q$ est \'egal \`a~$3$ et~$\Tq$ est \'egal \`a~$0$,~$3$ ou~$-3$. 
  Si~$\Tq$ est nul ou~$q$ est \'egal \`a~$3$ et~$\Tq$ vaut~$\pm 3$, alors~$\Lq$ est \'egal \`a~$\Q(\sqrt{-q})$, dans lequel~$q$ est ramifi\'e~;
  si~$q$ est \'egal \`a~$2$ et~$\Tq$ vaut~$\pm 2$, alors~$\Lq$ est \'egal \`a~$\Q( i )$, dans lequel~$2$ est ramifi\'e.
  Or, on a montr\'e que~$q$ est d\'ecompos\'e dans le corps~$\Lq$. On en d\'eduit que~$E$ a potentiellement r\'eduction ordinaire en~$\q$.
  \end{proof}

\section{Deux crit\`eres d'irr\'eductibilit\'e}\label{sec:critirr}

\subsection{Types de r\'eduction semi-stable diff\'erents}
Soit~$M$ un entier naturel sup\'erieur ou \'egal~$1$.

\begin{definition}[Ensemble~$\mathcal{E}(K ; M)$]
On note~$\mathcal{E}(K ; M)$ l'ensemble des courbes elliptiques~$E$ d\'efinies sur~$K$ pour lesquelles il existe
\begin{itemize}
\item[$\bullet$] $q$ et~$q'$ des nombres premiers totalement d\'ecompos\'es dans~$K$ et inf\'erieurs ou \'egaux \`a~$M$,
\item[$\bullet$] une place~$\q$ de~$K$ au-dessus de~$q$ et une place~$\q'$ de~$K$ au-dessus de~$q'$ tels que les types de r\'eduction semi-stable de~$E$ en~$\q$ et~$\q'$ (de la proposition~\ref{prop:compat}) sont diff\'erents
\end{itemize}
 (les premiers~$q$ et~$q'$ et les places~$\q$ et~$\q'$ peuvent d\'ependre de~$E$~; les premiers~$q$ et~$q'$ peuvent \^etre \'egaux).
\end{definition}

\begin{proposition}\label{prop:critirrqq'}
Soit~$M$ un entier naturel sup\'erieur ou \'egal~$1$.
Alors pour toute courbe elliptique~$E$ dans~$\mathcal{E}(K ; M)$ et tout nombre premier~$p$ strictement sup\'erieur \`a~$C(K , M)$, la repr\'esentation~$\varphi_{E,p}$ est irr\'eductible.
\end{proposition}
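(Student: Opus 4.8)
The plan is to argue by contradiction. Suppose that for some $E \in \mathcal{E}(K;M)$ and some prime $p > C(K,M)$ the representation~$\phiEp$ were \emph{reducible}. Then~$E$ carries a $G_K$-stable subgroup of order~$p$, the isogeny character~$\la$ is defined, and with it~$\mu = \la^{12}$ and the whole apparatus of Section~\ref{sec:compat}; in particular the family of integers $(\ato)_{\tau \in G}$ of Notations~\ref{not:po} is defined. The point I would stress at the outset is that this family is a \emph{global} invariant of the pair $(E,p)$: by definition $\ato$ equals the local integer~$\ap$ attached to $\p = \tau^{-1}(\po)$, which depends only on the action of the inertia subgroups above~$p$ on the isogeny subgroup and not on any auxiliary prime away from~$p$.

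First I would record that, since $C(K,n)$ (D\'efinition~\ref{def:C(K,n)}) is nondecreasing in~$n$ and $q, q' \leq M$, the hypothesis $p > C(K,M)$ yields $p > C(K,q)$ and $p > C(K,q')$; recall that $N\q = q$ and $N\q' = q'$ because $q,q'$ split completely. Hence Proposition~\ref{prop:compat} applies simultaneously at~$\q$ and at~$\q'$, and each application computes \emph{the same} family $(\ato)_{\tau\in G}$ from the semi-stable reduction type of~$E$ at the place in question.

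Next I would read off from the table of Proposition~\ref{prop:compat} the shape of $(\ato)_{\tau \in G}$ forced by each reduction type, keeping only the multiset of values $\{\ato : \tau \in G\}$: a \emph{multiplicative} type forces $(\ato)$ constant equal to~$0$ (case~M0) or to~$12$ (case~M1); a \emph{good supersingulier} type forces $(\ato)$ constant equal to~$6$ (case~BS); a \emph{good ordinaire} type forces $(\ato)$ to take the value~$12$ on an index-$2$ subgroup and~$0$ on its complement (cases~BO/BO'). Here I would note that the ordinary cases arise only when $\Lq \subseteq K$ is imaginary quadratic, so $d_K$ is even and $\Hq = \Gal(K/\Lq)$ is a proper nontrivial subgroup; consequently the ordinary family is genuinely mixed, containing both the value~$0$ and the value~$12$.

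The conclusion is then immediate by comparing these three multisets, which are pairwise distinct: ``constant~$0$ or constant~$12$'', ``constant~$6$'', and ``half~$0$'s and half~$12$'s'' cannot coincide (as $d_K \geq 1$, and $d_K \geq 2$ whenever an ordinary type occurs). Since $E \in \mathcal{E}(K;M)$ means exactly that the reduction types at~$\q$ and~$\q'$ lie in two \emph{different} classes among $\{$multiplicatif, bon supersingulier, bon ordinaire$\}$, the two descriptions of the single family $(\ato)_{\tau\in G}$ are incompatible, a contradiction; hence~$\phiEp$ is irreducible. I do not expect a serious obstacle: the only point needing care is the observation that $(\ato)_{\tau\in G}$ is literally the same object in both applications of Proposition~\ref{prop:compat}, together with the remark that comparing the \emph{multisets} of values already separates the three types, so that one never has to match the specific subgroups $\Gal(K/\Lq)$ coming from the possibly distinct fields~$\Lq$ attached to~$\q$ and~$\q'$.
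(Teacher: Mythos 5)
Your proof is correct and takes essentially the same approach as the paper's: assume reducibility, apply Proposition~\ref{prop:compat} at both~$\q$ and~$\q'$ (after the monotonicity remark $p > C(K,M) \geq C(K,q), C(K,q')$, which the paper leaves implicit), and derive a contradiction from the fact that the single global family $(\ato)_{\tau \in G}$ would have to match the incompatible shapes dictated by two distinct semi-stable reduction types. Your observation that comparing mere multisets of values already separates the three types is a slight streamlining that makes the paper's parenthetical BO/BO' distinction (subgroup versus complement of a subgroup) unnecessary here; the only nitpick is that $\Gal(K/\Lq)$ can be trivial (when $d_K = 2$), so it is its \emph{properness}, not nontriviality, that guarantees the ordinary family contains both the value~$0$ and the value~$12$.
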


\begin{proof}
On suppose par l'absurde que la repr\'esentation~$\phiEp$ est r\'eductible.
Alors pour les places~$\q$ et~$\q'$, on est dans les hypoth\`eses de la proposition~\ref{prop:compat}.
Le type de r\'eduction semi-stable de~$E$ en~$\q$ ou~$\q'$ d\'etermine donc la famille~$(\ato)_{\tau}$, qui d\'ecrit l'action sur le sous-groupe d'isog\'enie des sous-groupes d'inertie de~$G_K$ aux places de~$K$ au-dessus de~$p$.
Comme deux types de r\'eduction semi-stable pour~$\q$ ou~$\q'$ donnent des familles~$(\ato)_{\tau}$ diff\'erentes 
(on distingue les cas BO et BO' par le fait que dans le cas BO, l'ensemble des \'el\'ements~$\tau$ de~$G$ pour lesquels~$\ato$ est \'egal \`a~$12$ est un sous-groupe de~$G$ alors que dans le cas BO', c'est le compl\'ementaire d'un sous-groupe), ces types doivent \^etre les m\^emes~: on obtient une contradiction.
\end{proof}

\subsection{R\'eduction semi-stable multiplicative}

Soit~$q$ un nombre premier rationnel totalement d\'ecompos\'e dans~$K$.

\begin{definition}[Ensemble~$\mathcal{E'}(K ; q)$ et borne~$B(K ; q)$]
 Soit~$q$ un nombre premier rationnel totalement d\'ecompos\'e dans~$K$. 
 \begin{enumerate}
\item On note~$\mathcal{E'}(K ; q)$ l'ensemble des courbes elliptiques~$E$ d\'efinies sur~$K$ v\'erifiant~: il existe une place  (qui peut d\'ependre de~$E$) de~$K$ au-dessus de~$q$ en laquelle~$E$ a potentiellement mauvaise r\'eduction multiplicative. 
\item On pose
$$
B(K ; q) = \max \left( C(K,q) ,  \left(  1 + 3^{6 d_K h_K}  \right)^2 \right).
$$
\end{enumerate}
\end{definition}

\begin{proposition}\label{prop:critirrq}
Soit~$q$ un nombre premier rationnel totalement d\'ecompos\'e dans~$K$.
Alors pour toute courbe elliptique~$E$ dans~$\mathcal{E'}(K ; q)$ et tout nombre premier~$p$ strictement sup\'erieur \`a~$B(K ; q)$, la repr\'esentation~$\varphi_{E,p}$ est irr\'eductible.
\end{proposition}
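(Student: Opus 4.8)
The plan is to argue by contradiction: assume $\phiEp$ is reducible, fix the isogeny character $\la$ and set $\mu = \la^{12}$ as in Section~\ref{sec:compat}, so that $\phiEp$ is upper triangular with diagonal characters $(\la, \kip\la^{-1})$. The idea is that the potentially multiplicative place forces one of these two diagonal characters to have order dividing $12h_K$, which is incompatible with the uniform bounds on the order of torsion points once $p$ exceeds $\left(1+3^{6d_Kh_K}\right)^2$.

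First I would localise. Let $\q$ be a place of $K$ above $q$ at which $E$ has potentially multiplicative reduction. Since $q$ is totally split and $p > B(K;q) \ge C(K,q)$, Proposition~\ref{prop:compat} applies at $\q$: we are in case M0 or M1, that is, the family $(\ato)_{\tau\in G}$ is identically $0$ or identically $12$. Equivalently, by Proposition~\ref{prop:inpmult}, $\mu$ restricted to every inertia subgroup $\Ip$ with $\p\mid p$ is trivial (case M0) or equal to $\kip^{12}$ (case M1). This is the step that requires the threshold $C(K,q)$: Proposition~\ref{prop:compat} is exactly what rigidifies the a priori independent local exponents $\ato$ into a single global value, and it is also the step I expect to carry all the real weight.

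Next I would turn this into an everywhere unramified character. In case M0 the character $\mu = \la^{12}$ is unramified at every $\p\mid p$ (because $\ato=0$), unramified at the finite places away from $p$ (Propositions~\ref{prop:ramfrobhorspmult} and~\ref{prop:ramhorspbon}), and, being a twelfth power, unramified at the infinite places (the computation of~\ref{ssec:cclmu}); hence $\mu$ factors through the ideal class group of $K$ and $\la^{12h_K} = \mu^{h_K} = 1$. Thus $\la$ has order dividing $12h_K$, so $[\Kl:K]$ divides $12h_K$, and by the notations~\ref{not:dicr} the curve $E$ has a point of order $p$ rational over $\Kl$, with $[\Kl:\Q] \le 12 h_K d_K$. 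In case M1 I would run the same argument on the isogenous curve $E/W$, whose isogeny character is $\kip\la^{-1}$: its local exponents are $12-\ato \equiv 0$, so the twelfth power of $\kip\la^{-1}$ is again unramified everywhere and has order dividing $h_K$; consequently $E/W$ acquires a point of order $p$ over the field trivialising $\kip\la^{-1}$, a field of degree at most $12h_Kd_K$ over $\Q$.

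In either case I obtain an elliptic curve over a number field $F$ with $[F:\Q] \le 12 h_K d_K$ carrying a rational point of order $p$. The explicit uniform bound on the order of torsion points of elliptic curves over number fields of bounded degree (\cite{[Mer96]},~\cite{[Pa]}) then gives $p \le \left(1 + 3^{[F:\Q]/2}\right)^2 \le \left(1 + 3^{6 d_K h_K}\right)^2 \le B(K;q)$, contradicting $p > B(K;q)$. Apart from invoking Proposition~\ref{prop:compat}, the only care needed is the bookkeeping turning ``unramified everywhere'' into the class-number bound on the order of the character; once that is done the shape of the constant $\left(1+3^{6d_Kh_K}\right)^2$ is dictated precisely by the degree $12h_Kd_K$ feeding into the torsion bound.
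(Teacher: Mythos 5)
Your proposal is correct and follows essentially the same route as the paper's proof: contradiction via Proposition~\ref{prop:compat} yielding case M0 or M1, then the everywhere-unramified twelfth power of the relevant isogeny character ($\la$ in case M0, $\kip\la^{-1}$ in case M1 after passing to the quotient $E/W$), a class-number bound on the degree of the trivialising field, and finally the Merel--Oesterl\'e/Parent torsion bound to contradict $p > B(K;q)$. The only cosmetic difference is that you bound the order of the character through the class group ($\mu^{h_K}=1$) whereas the paper bounds the degree of~$\Km$ by placing it inside the Hilbert class field --- the same argument in two equivalent phrasings.
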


\begin{proof}
On suppose par l'absurde que la repr\'esentation~$\phiEp$ est r\'eductible.
Alors on est dans les hypoth\`eses de la proposition~\ref{prop:compat} et comme~$E$ appartient \`a~$\mathcal{E'}(K ; q)$, on est dans le cas M0 ou dans le cas M1.
On remarque que d'apr\`es les propositions~\ref{prop:ramfrobhorspmult} et~\ref{prop:ramhorspbon}, le caract\`ere~$\mu$ est non ramifi\'e en toute place finie de~$K$ premi\`ere \`a~$p$.

Dans le cas M0, pour tout id\'eal premier~$\p$ de~$K$ au-dessus de~$p$,~$\ap$ est nul.
 Par d\'efinition de~$\ap$ (partie~\ref{ssec:inenp}), cela implique que caract\`ere~$\mu$ est  non ramifi\'e en toute place de~$K$ au-dessus de~$p$.
Ainsi, l'extension~$\Km$ de~$K$ trivialisant~$\mu$, qui est ab\'elienne, est non ramifi\'ee en toute place finie de~$K$~;
 le corps~$\Km$ est donc inclus dans le corps de classes de Hilbert de~$K$ et son degr\'e (sur~$\Q$) est inf\'erieur ou \'egal \`a~$d_K h_K$.
Le corps~$\Kl$ qui trivialise le caract\`ere~$\la$ est une extension de degr\'e divisant~$12$ de~$\Km$~;
son degr\'e (sur~$\Q$) est donc inf\'erieur ou \'egal \`a~$12 d_K h_K$.

Or, la courbe~$E$ poss\`ede un point d'ordre~$p$ d\'efini sur~$\Kl$.
D'apr\`es des travaux de Merel et Oesterl\'e mentionn\'es dans les introductions de~\cite{[Pa]} et~\cite{[Mer96]}, on a alors
$$
p \leq \left(   1 + 3^{ \frac{12 d_K h_K}{2} }   \right)^2 =  \left(   1 + 3^{ 6 d_K h_K }   \right)^2,
$$
ce qui contredit le choix de~$p$ strictement sup\'erieur \`a~$B(K ; q)$.

Dans le cas M1, pour tout id\'eal premier~$\p$ de~$K$ au-dessus de~$p$,~$\ap$ est \'egal \`a~$12$.
Ainsi, le caract\`ere~$\kip^{12}\mu^{-1}$ est non ramifi\'e en toute place de~$K$ au-dessus de~$p$, et par suite en toute place finie de~$K$.
On consid\`ere alors le quotient de~$E$ par le sous-groupe d'isog\'enie~$W$.
On obtient une courbe elliptique~$E'$ d\'efinie sur~$K$, isog\`ene \`a~$E$~sur~$K$~; le sous-groupe~$E[p]/W$ de~$E'$ est d\'efini sur~$K$ et d'ordre~$p$ et le caract\`ere d'isog\'enie associ\'e \`a ce sous-groupe est~$\kip\la^{-1}$.
La puissance douzi\`eme de~$\kip\la^{-1}$ \'etant non ramifi\'ee en toute place finie de~$K$, 
on applique le m\^eme raisonnement que pr\'ec\'edemment.
\end{proof}

\section{Forme du caract\`ere d'isog\'enie et homoth\'eties}\label{sec:carisohom}

\`A partir de maintenant et jusqu'\`a la fin du texte, on suppose \`a nouveau la repr\'esentation~$\phiEp$ r\'eductible.

\subsection{Une version effective du th\'eor\`eme de Chebotarev}\label{ssec:Cheboeff}

\begin{TheoCheboEff}
Il existe une constante absolue et effectivement calculable~$A$ ayant la propri\'et\'e suivante~:
soient~$M$ un corps de nombres,~$N$ une extension finie galoisienne de~$M$,~$\Delta_N$ le discriminant de~$N$,~$C$ une classe de conjugaison du groupe de Galois~$\Gal(N/M)$~;
alors il existe un id\'eal premier de~$M$, non ramifi\'e dans~$N$, dont la classe de conjugaison des 
frobenius dans l'extension~$N/M$ est la classe de conjugaison~$C$ et dont la norme dans l'extension~$M/\Q$ est un nombre premier rationnel inf\'erieur ou \'egal \`a~$2(\Delta_N)^A$. 
\end{TheoCheboEff}

\begin{definition}[Ensemble d'id\'eaux~$\J_K$]\label{def:JK}
On note~$\J_K$ l'ensemble des id\'eaux maximaux de~$K$ dont la norme dans l'extension~$K/\Q$ est un nombre premier rationnel totalement d\'ecompos\'e dans~$K$ et inf\'erieur ou \'egal \`a~$2(\Delta_{K})^{Ah_K}$.
\end{definition}

\begin{proposition}\label{prop:genclid}
Toute classe d'id\'eaux de~$K$ contient un id\'eal de~$\J_K$.
\end{proposition}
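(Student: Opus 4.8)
Le plan est d'appliquer le th\'eor\`eme de Chebotarev effectif au corps de classes de Hilbert de~$K$, vu comme extension galoisienne de~$\Q$. Notons~$H$ le corps de classes de Hilbert de~$K$. Comme l'extension~$K/\Q$ est galoisienne et que~$H$ est canoniquement d\'etermin\'e par~$K$ (tout \'el\'ement de~$\Gal(\aQ/\Q)$ fixant~$\Q$ envoie~$H$ sur le corps de classes de Hilbert de~$K$, donc sur~$H$), l'extension~$H/\Q$ est elle aussi galoisienne, et l'on dispose de la suite exacte
$$
1 \longrightarrow \Gal(H/K) \longrightarrow \Gal(H/\Q) \longrightarrow \Gal(K/\Q) \longrightarrow 1,
$$
dans laquelle l'application de r\'eciprocit\'e d'Artin identifie~$\Gal(H/K)$ au groupe des classes d'id\'eaux de~$K$, qui est d'ordre~$h_K$. \'Etant donn\'ee une classe~$c$, on noterait~$\sigma_c$ l'\'el\'ement de~$\Gal(H/K)$ qui lui correspond et~$C$ sa classe de conjugaison dans~$\Gal(H/\Q)$~; comme~$\Gal(H/K)$ est distingu\'e dans~$\Gal(H/\Q)$, la classe~$C$ est enti\`erement contenue dans~$\Gal(H/K)$.

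On appliquerait alors le th\'eor\`eme de Chebotarev effectif \`a l'extension~$N = H$ de~$M = \Q$ et \`a la classe de conjugaison~$C$~: il fournit un nombre premier rationnel~$\ell$, non ramifi\'e dans~$H$, dont la classe de Frobenius dans~$\Gal(H/\Q)$ est~$C$ et qui v\'erifie~$\ell \leq 2(\Delta_H)^A$. Le choix de prendre~$\Q$ pour corps de base (plut\^ot que~$K$) est ici essentiel~: puisque~$C$ est contenue dans~$\Gal(H/K)$, l'image dans~$\Gal(K/\Q)$ de tout Frobenius en~$\ell$ est triviale, ce qui garantit que~$\ell$ est \emph{totalement} d\'ecompos\'e dans~$K$, et non pas seulement qu'il poss\`ede un diviseur premier de degr\'e r\'esiduel~$1$. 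Tout id\'eal premier~$\mathfrak{L}$ de~$K$ au-dessus de~$\ell$ v\'erifie donc~$N\mathfrak{L} = \ell$. En choisissant de plus un id\'eal premier~$\mathfrak{P}$ de~$H$ au-dessus de~$\ell$ dont le Frobenius dans~$\Gal(H/\Q)$ vaut exactement~$\sigma_c$ (ce qui est possible car~$C$ est la classe de conjugaison de~$\sigma_c$) et en posant~$\mathfrak{L} = \mathfrak{P} \cap \OK$, la d\'ecomposition totale donne~$K_{\mathfrak{L}} = \Q_{\ell}$, de sorte que le Frobenius de~$\mathfrak{P}$ sur~$\ell$ co\"incide avec celui de~$\mathfrak{P}$ sur~$\mathfrak{L}$~; la r\'eciprocit\'e d'Artin pour~$H/K$ montre alors que~$\mathfrak{L}$ est dans la classe~$c$.

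Il resterait enfin \`a relier la borne~$2(\Delta_H)^A$ \`a celle qui d\'efinit~$\J_K$, et c'est le point de nature arithm\'etique du raisonnement. Comme l'extension~$H/K$ est non ramifi\'ee, sa diff\'erente est triviale, et la formule de transitivit\'e des discriminants dans la tour~$\Q \subseteq K \subseteq H$ donne, en valeur absolue,
$$
\Delta_H = N_{K/\Q}\left(\mathfrak{d}_{H/K}\right) \cdot (\DK)^{[H:K]} = (\DK)^{h_K},
$$
puisque~$\mathfrak{d}_{H/K}$ est l'id\'eal unit\'e et que~$[H:K] = h_K$. On en d\'eduit
$$
N\mathfrak{L} = \ell \leq 2(\Delta_H)^A = 2(\DK)^{Ah_K},
$$
si bien que~$\mathfrak{L}$ appartient \`a~$\J_K$ et repr\'esente la classe~$c$, ce qui ach\`everait la d\'emonstration. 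L'obstacle principal me semble \^etre l'articulation correcte des deux extensions~: passer par~$H/\Q$ pour forcer la d\'ecomposition totale de~$\ell$ dans~$K$, tout en r\'ecup\'erant la classe d'id\'eaux de~$\mathfrak{L}$ via la r\'eciprocit\'e d'Artin de~$H/K$, puis contr\^oler le discriminant de~$H$ pour faire co\"incider exactement la borne de Chebotarev avec celle de~$\J_K$.
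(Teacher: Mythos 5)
Your proof is correct and follows essentially the same route as the paper: effective Chebotarev applied to the extension $H_K/\Q$ (with the conjugacy class of $\sigma_c$ lying inside the normal subgroup $\Gal(H_K/K)$), total decomposition of the resulting prime in $K$, identification of the Frobenius over $\ell$ with the Frobenius over $\mathfrak{L}$ in $H_K/K$, and the identity $\Delta_{H_K} = \Delta_K^{h_K}$. The only difference is cosmetic: you spell out the justification of the Galois property of $H_K/\Q$ and the discriminant computation via the transitivity formula, which the paper merely asserts in passing.
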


\begin{proof}
On note~$H_K$ le corps de classes de Hilbert de~$K$. D\'emontrer la proposition est \'equivalent \`a montrer que pour tout \'el\'ement~$\si$ du groupe de Galois de l'extension~$H_K/K$, il existe un id\'eal premier non nul~$\q$ de~$K$ appartenant \`a~$\J_K$ tel que l'\'el\'ement de Frobenius  associ\'e \`a~$\q$ dans~$\Gal(H_K/K)$ par l'application de r\'eciprocit\'e d'Artin  est \'egal \`a~$\si$.

Comme on a suppos\'e le corps~$K$ galoisien sur~$\Q$, le corps~$H_K$ est \'egalement une extension galoisienne de~$\Q$.
On va utiliser la version effective du th\'eor\`eme de Chebotarev pour les corps~$N = H_K$ et~$M = \Q$.
 On remarque que le discriminant~$\Delta_{H_K}$ de~$H_K$ est \'egal \`a~$\DK^{h_K}$  et que le groupe~$\Gal(H_K/K)$ est un sous-groupe (distingu\'e) du groupe~$\Gal(H_K/\Q)$. 

Soit~$\si$ un \'el\'ement de~$\Gal(H_K/K)$ et~$C$ la classe de conjugaison de~$\si$ dans~$\Gal(H_K/\Q)$.
D'apr\`es le th\'eor\`eme de Chebotarev effectif, il existe un nombre premier rationnel~$q$, non ramifi\'e dans~$H_K$ et inf\'erieur ou \'egal \`a~$2(\Delta_{H_K})^A$, 
 tel que la classe de conjugaison form\'ee par les frobenius de~$q$ dans l'extension~$H_K/\Q$ est \'egale \`a~$C$.
 Il existe donc un id\'eal~$\tilde{\q}$ de~$H_K$ au-dessus de~$q$ tel que le frobenius~$\Frob(\tilde{\q}/q)$ de~$\tilde{\q}$ dans l'extension~$H_K/\Q$ est \'egal  \`a~$\si$.

Soit~$\q$ l'id\'eal de~$K$ situ\'e au-dessous de~$\tilde{\q}$.
Alors la caract\'eristique de~$\q$ est le nombre premier rationnel~$q$.
 Par choix,~$q$ est inf\'erieur ou \'egal \`a~$2(\DK)^{Ah_K}$ et non ramifi\'e dans~$H_K$, donc dans $K$.
 Le frobenius~$\Frob(\q/q)$ de~$\q$ dans l'extension~$K/\Q$ est \'egal \`a la restriction \`a~$K$ du frobenius~$\Frob(\tilde{\q}/q)$ de~$\tilde{\q}$ dans~$H_K/\Q$.
  Or,~$\Frob(\tilde{\q}/q)$ est \'egal \`a~$\si$, qui est un \'el\'ement de~$\Gal(H_K/K)$.
   Ceci implique que~$\Frob(\q/q)$ est l'identit\'e de~$K$, donc que le degr\'e r\'esiduel de~$\q$ dans~$K/\Q$ est~$1$ et finalement que la norme de~$\q$ dans~$K / \Q$ est~$q$.
    Comme~$K$ est suppos\'e galoisien sur~$\Q$, on obtient que~$q$ est totalement d\'ecompos\'e dans~$K$ et ainsi que~$\q$ est dans l'ensemble~$\J_K$.

Enfin, comme~$\q$ est de degr\'e~$1$ dans~$K/\Q$, l'\'el\'ement de Frobenius~$\Frob(\tilde{\q}/\q)$ associ\'e \`a~$\q$ dans l'extension~$H_K/K$ v\'erifie
$$
\Frob(\tilde{\q}/\q) = \Frob(\tilde{\q}/q) = \si.
$$
\end{proof}

\subsection{Les deux formes possibles du caract\`ere d'isog\'enie}\label{ssec:2cariso}

Dans cette partie, on s'inspire de la d\'emonstration du th\'eor\`eme~1 (\S2) de~\cite{[Mom]} pour obtenir le th\'eor\`eme~II de l'introduction.

\begin{definition}[Borne $C_K$]
On pose
$$
C_K  = \max \left(  C\left( K , 2(\DK)^{Ah_K}  \right) ,   \left(  1 + 3^{6 d_K h_K}  \right)^2   \right).
$$
\end{definition}

Le nombre~$C_K$ ne d\'epend que du corps de nombres~$K$.

\begin{proposition}\label{prop:mmredJK}
On suppose que~$p$ est strictement sup\'erieur \`a~$C_K$.
Alors, en tout id\'eal de~$\J_K$, la courbe~$E$ a potentiellement bonne r\'eduction et tous les id\'eaux de~$\J_K$ appartiennent au m\^eme cas (BS, BO ou BO') de la proposition~\ref{prop:compat}.
\end{proposition}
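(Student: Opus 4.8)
Le plan est de combiner les deux crit\`eres d'irr\'eductibilit\'e de la partie~\ref{sec:critirr} avec l'observation, d\'ej\`a employ\'ee dans la d\'emonstration de la proposition~\ref{prop:critirrqq'}, que la famille d'entiers~$(\ato)_{\tau \in G}$ ne d\'epend que de la courbe~$E$ (et de l'id\'eal fix\'e~$\po$ au-dessus de~$p$), et nullement de l'id\'eal~$\q$ consid\'er\'e. Je commencerais par la bonne r\'eduction potentielle. Soit~$\q$ un id\'eal de~$\J_K$ et~$q$ sa norme dans~$K/\Q$~: c'est un nombre premier totalement d\'ecompos\'e dans~$K$, inf\'erieur ou \'egal \`a~$2(\DK)^{Ah_K}$. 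La borne~$C(K,n)$ (d\'efinition~\ref{def:C(K,n)}) \'etant croissante en~$n$, on a~$C(K,q) \leq C\left(K, 2(\DK)^{Ah_K}\right) \leq C_K < p$, ainsi que~$\left(1 + 3^{6 d_K h_K}\right)^2 \leq C_K < p$~; le nombre premier~$p$ est donc strictement sup\'erieur \`a~$B(K;q)$. Je raisonnerais alors par l'absurde~: si~$E$ avait potentiellement mauvaise r\'eduction multiplicative en~$\q$, elle appartiendrait \`a~$\mathcal{E}'(K;q)$ et la proposition~\ref{prop:critirrq} entra\^inerait l'irr\'eductibilit\'e de~$\phiEp$, en contradiction avec l'hypoth\`ese de r\'eductibilit\'e en vigueur dans cette partie. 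Ainsi~$E$ a potentiellement bonne r\'eduction en tout id\'eal de~$\J_K$.

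Pour le second point, j'observerais que l'in\'egalit\'e~$p > C_K \geq C(K,q)$ vaut pour tout~$\q \in \J_K$ de norme~$q$, de sorte que la proposition~\ref{prop:compat} s'applique \`a chacun de ces id\'eaux~; la premi\`ere partie garantit qu'on se trouve dans l'un des cas BS, BO ou BO'. Le point d\'ecisif est que la famille~$(\ato)_{\tau \in G}$, d\'efinie aux notations~\ref{not:po} \`a partir de la restriction de~$\la^{12}$ aux sous-groupes d'inertie des places au-dessus de~$p$, est la m\^eme pour tous les id\'eaux de~$\J_K$. Or chacun des trois cas impose \`a cette famille une forme caract\'eristique et distincte~: dans le cas BS, tous les~$\ato$ valent~$6$~; dans les cas BO et BO', les~$\ato$ prennent leurs valeurs dans~$\{0,12\}$, l'ensemble~$\{\tau \in G : \ato = 12\}$ \'etant un sous-groupe de~$G$ (contenant l'identit\'e) dans le cas BO et le compl\'ementaire d'un sous-groupe (ne contenant pas l'identit\'e) dans le cas BO'. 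Ces trois profils \'etant deux \`a deux incompatibles et la famille~$(\ato)_\tau$ \'etant commune \`a tous les~$\q \in \J_K$, j'en conclurais que tous ces id\'eaux rel\`event du m\^eme cas.

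La principale difficult\'e n'est pas calculatoire mais tient au bon usage des r\'esultats ant\'erieurs~: il faut d'une part v\'erifier que~$C_K$ domine simultan\'ement~$C(K,q)$ et~$\left(1 + 3^{6 d_K h_K}\right)^2$ pour toute norme~$q$ d'un id\'eal de~$\J_K$, ce qui autorise l'emploi de la proposition~\ref{prop:critirrq}~; et d'autre part reconna\^itre que la famille~$(\ato)_\tau$ est intrins\`eque \`a~$E$ et qu'elle d\'etermine sans ambigu\"it\'e le cas de la proposition~\ref{prop:compat}, la s\'eparation entre BO et BO' reposant sur le fait que l'ensemble~$\{\tau \in G : \ato = 12\}$ soit ou non un sous-groupe de~$G$.
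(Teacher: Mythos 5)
Your proof is correct and follows essentially the same route as the paper~: the multiplicative type is excluded at every ideal of~$\J_K$ via Proposition~\ref{prop:critirrq} combined with the standing reducibility hypothesis, and the uniqueness of the case among BS, BO, BO' rests on the fact that the family~$(\ato)_{\tau \in G}$ is intrinsic to~$E$ and takes pairwise incompatible forms in the three cases. The only (cosmetic) difference is that where the paper invokes Proposition~\ref{prop:critirrqq'} as a black box (the curve cannot belong to~$\mathcal{E}(K ; 2(\DK)^{Ah_K})$), you unfold its internal argument instead --- the subgroup/complement-of-a-subgroup distinction between BO and BO' --- which is exactly the content of that proposition's proof.
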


\begin{proof}
Par construction, la borne~$C_K$ est sup\'erieure ou \'egale \`a~$ C\left( K , 2(\DK)^{Ah_K}  \right)$ et \`a~$B (K ; q)$ pour tout id\'eal~$\q$ de~$\J_K$ de caract\'eristique~$q$ .
Comme la repr\'esentation~$\phiEp$ est suppos\'e r\'eductible, les propositions~\ref{prop:critirrqq'} et~\ref{prop:critirrq} donnent que la courbe~$E$ n'appartient pas \`a la famille~$\mathcal{E}(K ; 2(\DK)^{Ah_K} )$ ni \`a la famille~$\mathcal{E'}(K ; q )$ pour tout premier rationnel~$q$ totalement d\'ecompos\'e dans~$K$ et inf\'erieur ou \'egal \`a~$2(\DK)^{Ah_K}$.

Ceci implique que~$E$ a m\^eme type de r\'eduction semi-stable (parmi les cinq types possibles de la proposition~\ref{prop:compat}) en tout id\'eal de~$\J_K$ et que ce type n'est pas multiplicatif.
\end{proof}

\subsubsection{Type supersingulier} \label{sssec:susec}

\begin{proposition}\label{prop:typeBS}
On suppose que~$p$ est strictement sup\'erieur \`a~$C_K$ et que tous les id\'eaux de~$\J_K$ sont de type supersingulier (BS dans la proposition~\ref{prop:compat}).
 Alors~:
\begin{enumerate}
	\item le nombre premier~$p$ est congru \`a $3$ modulo $4$~;
	\item en toute place de~$K$ au-dessus de~$p$, la courbe~$E$ a mauvaise r\'eduction additive et potentiellement bonne r\'eduction supersinguli\`ere~;
	\item on a~$\la^6 = \left( \kip\la^{-1} \right) ^6 =  \left( \kip^{\frac{p+1}{2}} \right)^3$. 
	\end{enumerate}
 \end{proposition}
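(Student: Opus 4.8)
The plan is to read the local information at $p$ off the tables of \S\ref{sec:etloc} and then to determine $\la^{6}$ globally in two stages, by exhibiting everywhere-unramified characters and killing them with Proposition~\ref{prop:genclid}. Saying that every ideal of $\J_K$ is of type BS means, by Proposition~\ref{prop:compat}, that the family $(\ato)_{\tau\in G}$ is constantly $6$; hence $\ap=6$ at every prime $\p\mid p$. As $6\notin\{0,12\}$, Proposition~\ref{prop:inpmult} excludes potentially multiplicative reduction at $\p$, so $E$ has potentially good reduction there and the table of Proposition~\ref{prop:inpbon} leaves only the line $(\ep,\rp)=(4,2)$. That line gives at once assertion~(1), $p\equiv 3\pmod 4$, as well as $j(E)\equiv 1728\pmod{\p}$ and supersingular type; and since $\ep=4\neq 1$ the reduction is not good, hence additive, which is assertion~(2).

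For assertion~(3) I would first prove $\la^{12}=\kip^{6}$ globally. The character $\la^{12}\kip^{-6}$ is unramified at every finite place away from $p$ (Propositions~\ref{prop:ramfrobhorspmult} and~\ref{prop:ramhorspbon}), and at each $\p\mid p$ one has $\la^{12}|_{\Ip}=\kip^{\ap}=\kip^{6}$, so it is unramified there too; it is also unramified at the infinite places, since $\mu=\la^{12}$ is trivial there by \S\ref{ssec:cclmu} and the exponent of $\kip$ is even. Being everywhere unramified it factors through $\Gal(H_K/K)\cong\mathrm{Cl}(K)$. On a Frobenius $\sq$ with $\q\in\J_K$ of type BS one has $\la^{12}(\sq)=\bq^{12}=(\bq^{6})^{2}=q^{6}=\kip^{6}(\sq)$, so the character is trivial on every such $\sq$; as $\J_K$ meets all ideal classes (Proposition~\ref{prop:genclid}) it is trivial, giving $\la^{12}=\kip^{6}$ and hence $(\kip\la^{-1})^{6}=\kip^{6}\la^{-6}=\la^{6}$.

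It remains to extract the right square root, namely $\la^{6}=\kip^{3+(p-1)/2}$. Put $\psi=\la^{6}\kip^{-3-(p-1)/2}$; then $\psi^{2}=\la^{12}\kip^{-6-(p-1)}=1$, so $\psi$ is quadratic, and it is unramified away from $p$ and at infinity (the exponent $3+(p-1)/2$ is even because $p\equiv 3\pmod 4$). At $\p\mid p$ the relation $\la^{4}=\kip^{2}$ on $\Ip$ shows $\la^{2}\kip^{-1}$ is quadratic on $\Ip$, and it is nontrivial there: $\la|_{\Ip}$ has order dividing $p-1$, so $\la^{2}|_{\Ip}$ has order dividing $(p-1)/2$, while $\kip|_{\Ip}$ has order $p-1$. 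A nontrivial tame character of $\Ip$ valued in $\Fpx$ is a power of the fundamental character $\kip|_{\Ip}$, and its only power of order $2$ is $\kip^{(p-1)/2}|_{\Ip}$; therefore $\la^{6}\kip^{-3}|_{\Ip}=(\la^{2}\kip^{-1})^{3}|_{\Ip}=\la^{2}\kip^{-1}|_{\Ip}=\kip^{(p-1)/2}|_{\Ip}$, and $\psi$ is unramified at $\p$ as well. Thus $\psi$ factors through $\mathrm{Cl}(K)$.

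Finally I would evaluate $\psi$ on $\sq$ for $\q\in\J_K$: from $\la^{6}(\sq)=\bq^{6}=-q^{3}$ and $\kip(\sq)=q$ one gets $\psi(\sq)=-q^{3}\,q^{-3-(p-1)/2}=-\left(\frac{q}{p}\right)$. In type BS one has $\Lq=\Q(\sqrt{-q})$, so the discriminant $\Tq^{2}-4q$ has the same squarefree part as $-q$ and, by Remark~\ref{rem:pdsLq}, is a square modulo $p$; together with $\left(\frac{-1}{p}\right)=-1$ (because $p\equiv 3\pmod4$) this forces $\left(\frac{q}{p}\right)=-1$, so $\psi(\sq)=1$. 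Since $\J_K$ generates the class group, $\psi$ is trivial, i.e. $\la^{6}=\kip^{3+(p-1)/2}$; and as $3(p+1)/2\equiv 3+(p-1)/2\pmod{p-1}$ this is $(\kip^{(p+1)/2})^{3}$, completing assertion~(3). The only delicate step is the identification of the quadratic twist at $p$ in the previous paragraph — everything else is bookkeeping with the tables — and it is precisely there that $p\equiv 3\pmod4$ and the supersingularity of $\q$ are indispensable.
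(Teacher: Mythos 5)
Your proof of assertions (1) and (2), and your derivation of $\la^{12}=\kip^{6}$ (hence $(\kip\la^{-1})^{6}=\la^{6}$), coincide with the paper's argument. For the remaining square-root extraction you take a genuinely different route: a second descent through the class group, applied to the quadratic character $\psi=\la^{6}\kip^{-3-(p-1)/2}$ and combined with a quadratic-residue computation. That route contains a genuine gap: you assert that $\psi$ is unramified at the finite places prime to $p$ with no justification (your parenthetical only treats the infinite places). What Propositions~\ref{prop:ramfrobhorspmult} and~\ref{prop:ramhorspbon} give is that $\la(\Iq)$ is cyclic of order $\eq$, so that $\la^{\eq}$ --- and a fortiori $\la^{12}$ --- is unramified at $\q$; but $\la^{6}\vert_{\Iq}$ has order $\eq/\gcd(\eq,6)$, which equals $2$ when $\eq=4$. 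A place $\q\nmid p$ of additive, potentially good reduction with $\eq=4$ is not excluded by anything you have invoked, and at such a place $\psi$ is ramified, so it does not factor through $\Gal(H_K/K)$ and the appeal to Proposition~\ref{prop:genclid} collapses. (That no such place exists is in fact a \emph{consequence} of assertion (3), so it cannot be taken for granted in its proof.)

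The gap is fixable in one line, using assertion (1) which you have already established at that point: by the discussion of \S\ref{sssec:eLbred}, for $\q\nmid p$ of potentially good reduction the order $\eq$ of $\la(\Iq)$ divides $p-1$; since $p\equiv 3\pmod 4$ we have $4\nmid p-1$, hence $\eq\in\{1,2,3,6\}$ and $\la^{6}\vert_{\Iq}=1$ (in the potentially multiplicative case $\eq\leq 2$, so there is nothing to check). With that line added, the rest of your argument is sound: the identification $\la^{2}\kip^{-1}\vert_{\Ip}=\kip^{(p-1)/2}\vert_{\Ip}$, the value $\psi(\sq)=-\left(\frac{q}{p}\right)$, and the observation that $\Tq^{2}-4q$ being a nonzero square modulo $p$ together with $\left(\frac{-1}{p}\right)=-1$ forces $\left(\frac{q}{p}\right)=-1$, are all correct, as is the final congruence $3(p+1)/2\equiv 3+(p-1)/2 \pmod{p-1}$. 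Note that the paper sidesteps the whole issue: once $\mu=\kip^{6}$ is known, it sets $\psi=\kip\la^{-2}$, which has order dividing $6$, decomposes $\psi=\psi_{2}\psi_{3}$ into its $2$-part and $3$-part, and uses that $(p-1)/2$ is odd (together with the fact that the image of $\psi_{3}$ consists of squares) to get $\psi_{2}=\psi^{(p-1)/2}=\kip^{(p-1)/2}$, whence $\la^{6}=\kip^{3}\psi_{2}=\kip^{3+(p-1)/2}$ --- a purely character-theoretic manipulation requiring no further ramification analysis or class field theory, at the price of being less transparent about \emph{why} the formula holds than your Legendre-symbol computation.
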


\begin{proof}
Soit~$\p$ un id\'eal premier de~$K$ au-dessus de~$p$.
Comme tous les id\'eaux de~$\J_K$ sont de type BS, la proposition~\ref{prop:compat} donne que~$\ap$ est \'egal \`a $6$.
D'apr\`es les parties~\ref{sssec:eLbred} et~\ref{ssec:inenp} (notamment la  proposition~\ref{prop:inpbon}), ceci implique que~$p$ est congru \`a~$3$ modulo~$4$,~$E$ a potentiellement bonne r\'eduction supersinguli\`ere en~$\p$ et, comme~$\ep$ est \'egal \`a~$4$,~$E$ a r\'eduction additive en~$\p$.

D'apr\`es les propositions~\ref{prop:ramfrobhorspmult},~\ref{prop:ramhorspbon} et~\ref{prop:compat}, le caract\`ere~$\mu\kip^{-6}$ est ainsi non ramifi\'e en toute place finie de~$K$.
Il d\'efinit donc une extension ab\'elienne de~$K$ contenue dans son corps de classes de Hilbert~$H_K$.
Comme les classes des \'el\'ements de~$\J_K$ forment tout le groupe des classes d'id\'eaux de~$K$,
 le caract\`ere~$\mu \kip^{- 6}$  est enti\`erement d\'etermin\'e par sa valeur sur les \'el\'ements~$\sq$, lorsque~$\q$ d\'ecrit~$\J_K$.

Soit~$\q$ un id\'eal dans~$\J_K$~; alors d'apr\`es la proposition~\ref{prop:compat} (et avec les notations~\ref{not:poq}), on a~:
$$
\mu(\sq) = \bq^{12} \Mod \Poq = q^{6} \Mod p = (N\q)^{6} \Mod p = \kip(\sq)^{6}.
$$
On en d\'eduit que~$\mu$ est \'egal \`a~$\kip^{6}$ sur~$G_K$, ce qui implique~:
$$
\left(\kip \la^{-1} \right)^6 = \la^{12} \la^{ - 6} = \la^{ 6}
.
$$

Soit~$\psi$ le caract\`ere~$\kip \la^{-2}$ de~$G_K$ dans~$\Fpx$. On a
$
\psi^6 = \kip^6 \la^{-12} = \kip^6 \mu^{-1} \equiv 1
$
donc l'ordre de~$\psi$ divise~$6$.
On peut ainsi \'ecrire~$\psi$ comme produit d'un caract\`ere~$\psi_2$ d'ordre divisant~$2$ et d'un caract\`ere~$\psi_3$ d'ordre divisant~$3$.
Comme~$p$ est congru \`a~$3$ modulo~$4$ et que l'image de~$\psi_3$ est form\'ee de carr\'es de~$\Fpx$, on a~:
$$
\psi_2  =  \psi_2^{\frac{p-1}{2}} = \psi_2^{\frac{p-1}{2}} \psi_3^{\frac{p-1}{2}}  =   \psi^{\frac{p-1}{2}} =  \kip^{\frac{p-1}{2}} \la^{-(p-1)} = \kip^{\frac{p-1}{2}}.
$$
On en d\'eduit finalement~:
$$
\left( \kip\la^{-1} \right) ^6  =  \la^6  =  \left( \kip\psi^{-1} \right) ^3 =   \left( \kip\psi_2^{-1} \right) ^3 \psi_3^{-3} =  \left( \kip \kip^{\frac{p-1}{2}} \right) ^3.
$$
\end{proof}

\subsubsection{Type ordinaire} \label{sssec:ord}

\begin{proposition}
On suppose que~$p$ est strictement sup\'erieur \`a~$C_K$ et que tous les id\'eaux de~$\J_K$ sont de type BO ou que tous les id\'eaux de~$\J_K$ sont de type BO' (proposition~\ref{prop:compat}).
 Alors il existe un unique corps quadratique imaginaire~$L$ inclus dans~$K$ v\'erifiant~: pour tout id\'eal~$\q$ dans~$\J_K$,~$\Lq$ est \'egal \`a~$L$.
 \end{proposition}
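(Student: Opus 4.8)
L'idée directrice est de remarquer que la famille d'entiers~$(\ato)_{\tau \in G}$ est \emph{intrinsèque} au triplet~$(E, p, \po)$ et, en particulier, ne dépend pas de l'idéal~$\q$ : d'après les notations~\ref{not:po}, l'entier~$\ato$ n'est autre que l'entier~$\ap$ associé à l'idéal~$\p = \tau^{-1}(\po)$ situé au-dessus de~$p$, et ne met donc en jeu que l'action des sous-groupes d'inertie en~$p$ sur le sous-groupe d'isogénie. Je commencerais par expliciter ce point, puis par noter que l'ensemble~$\J_K$ est non vide (proposition~\ref{prop:genclid}, appliquée par exemple à la classe triviale).

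Ensuite, pour tout idéal~$\q$ de~$\J_K$, de norme un premier rationnel~$q$ totalement décomposé et vérifiant~$q \leq 2(\DK)^{Ah_K}$, l'hypothèse~$p > C_K$ entraîne~$p > C(K, q)$ (la borne~$C(K, n)$ étant croissante en~$n$ et~$C_K \geq C(K, 2(\DK)^{Ah_K})$) ; on se place donc dans les conditions de la proposition~\ref{prop:compat}. Par ailleurs la proposition~\ref{prop:mmredJK} garantit que~$E$ a potentiellement bonne réduction en~$\q$ et que~$\q$ relève du type global prescrit (tous BO, ou bien tous BO'). Dans les deux cas, la colonne~$\Lq$ du tableau de la proposition~\ref{prop:compat} fournit un corps quadratique imaginaire~$\Lq$ \emph{inclus dans}~$K$, de sorte que~$\Hq = \Gal(K/\Lq)$ est un sous-groupe d'indice~$2$ de~$G$.

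Le cœur de l'argument consiste alors à identifier ce sous-groupe au moyen de la famille~$(\ato)$. Dans le cas BO, la colonne \og Famille~$(\ato)$ \fg{} du tableau donne~$\Hq = \{\tau \in G : \ato = 12\}$ ; dans le cas BO', elle donne~$\Hq = \{\tau \in G : \ato = 0\}$. Dans les deux situations, le membre de droite ne dépend pas de~$\q$ : le sous-groupe~$\Hq$ est donc le même pour tous les idéaux~$\q$ de~$\J_K$ (qui sont tous du type considéré), et son corps fixe~$\Lq = K^{\Hq}$ l'est aussi. Choisissant un idéal~$\q$ quelconque dans~$\J_K$ (non vide), je poserais~$L = \Lq$ : c'est un corps quadratique imaginaire inclus dans~$K$ et, par ce qui précède, on a~$\Lq = L$ pour tout~$\q$ de~$\J_K$. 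L'unicité est immédiate, puisque la propriété \og $\Lq = L$ pour tout~$\q \in \J_K$ \fg{} impose~$L = \Lq$ pour un idéal fixé.

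La seule véritable subtilité réside dans la reconnaissance de cette indépendance en~$\q$ et dans le traitement simultané des deux cas BO et BO' : il faut bien garder à l'esprit que c'est l'ensemble des~$\tau$ avec~$\ato = 12$ qui constitue~$\Hq$ dans le cas BO, alors que c'est celui des~$\tau$ avec~$\ato = 0$ dans le cas BO', mais dans les deux cas le sous-groupe produit se lit sur la famille~$(\ato)$ seule, donc pas sur~$\q$. Aucun calcul supplémentaire n'est requis au-delà de cette lecture du tableau de la proposition~\ref{prop:compat}.
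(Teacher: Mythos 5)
Votre démonstration est correcte et suit essentiellement la même voie que celle de l'article : dans les deux cas, on identifie~$\Gal(K/\Lq)$ à l'ensemble~$\{\tau \in G : \ato = 12\}$ (type BO) ou~$\{\tau \in G : \ato = 0\}$ (type BO'), ensemble qui ne dépend que de la famille~$(\ato)_{\tau \in G}$ et donc pas de~$\q$, ce qui fournit le corps commun~$L$. Les précisions supplémentaires que vous donnez (non-vacuité de~$\J_K$ via la proposition~\ref{prop:genclid}, vérification que~$p > C(K,q)$) sont implicites dans le texte et ne changent pas la nature de l'argument.
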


\begin{proof}
Lorsque tous les id\'eaux dans~$\J_K$ sont de type BO, l'ensemble des \'el\'ements~$\tau$ de $G$ tels que~$\ato$ est \'egal \`a~$12$ est un sous-groupe d'indice~$2$ de~$G$, \'egal \`a~ $\Gal(K/\Lq)$ pour tout id\'eal~$\q$ de $\J_K$, $\Lq$ \'etant quadratique imaginaire et inclus dans $K$. (proposition~\ref{prop:compat}).
  Ainsi, le groupe $\Gal(K/\Lq)$, et par suite le corps $\Lq$, est ind\'ependant de l'id\'eal $\q$ dans $\J_K$. Ce corps quadratique commun fournit le corps~$L$ de la proposition.

Lorsque tous les id\'eaux dans~$\J_K$ sont de type BO', on applique le m\^eme raisonnement \`a l'ensemble des \'el\'ements~$\tau$ de $G$ tels que~$\ato$ est nul.
\end{proof}

\begin{proposition}
La norme dans l'extension~$K/L$ de tout id\'eal fractionnaire de~$K$ est un id\'eal fractionnaire principal de~$L$. Le corps de classes de Hilbert de~$L$ est contenu dans~$K$.
\end{proposition}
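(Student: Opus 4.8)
The plan is to prove the two assertions in turn, deriving the first directly from the case analysis of Proposition~\ref{prop:compat} together with the covering property of~$\J_K$, and then deducing the second from the first by class field theory.

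First I would show that $N_{K/L}$ carries every fractional ideal of~$K$ to a principal ideal of~$L$. Since $N_{K/L}(\al\OK) = N_{K/L}(\al)\OL$ is principal for every $\al \in K^{\times}$, the norm induces a homomorphism $\overline{N}\colon \mathrm{Cl}(K) \to \mathrm{Cl}(L)$ on ideal class groups, and it suffices to prove $\overline{N}$ trivial. By Proposition~\ref{prop:genclid} every ideal class of~$K$ is represented by some $\q \in \J_K$, so it is enough to check that $N_{K/L}(\q)$ is principal for each such~$\q$. By hypothesis all ideals of~$\J_K$ lie in the same case, BO or BO', of Proposition~\ref{prop:compat}; in either case that proposition gives $N_{K/L}(\q) = \bq\OL$ or $N_{K/L}(\q) = \bqb\OL$, which is principal because $\bq,\bqb \in \OLq = \OL$ are roots of the integral polynomial $\Pq(X)$. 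Hence $\overline{N}=1$, and the first assertion follows.

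For the second assertion I would prove $H_L \subseteq K$ by identifying the image $N_{K/L}(\mathrm{Cl}(K))$ inside $\mathrm{Cl}(L) \cong \Gal(H_L/L)$. The compositum $KH_L$ is an abelian extension of~$K$ (its group embeds into the abelian $\Gal(H_L/L)$ by restriction) and it is unramified, since $H_L/L$ is unramified everywhere; thus Artin reciprocity for $KH_L/K$ furnishes a surjection $\mathrm{Cl}(K) \twoheadrightarrow \Gal(KH_L/K)$, and restriction to~$H_L$ is an isomorphism $\Gal(KH_L/K) \xrightarrow{\sim} \Gal(H_L/(K \cap H_L)) \hookrightarrow \Gal(H_L/L)$. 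For a prime~$\mathfrak{P}$ of~$K$ above a prime~$\mathfrak{p}$ of~$L$ of residue degree~$f$, one has $\Frob_{\mathfrak{P}}|_{H_L} = \Frob_{\mathfrak{p}}^{f}$, which under $\Gal(H_L/L) \cong \mathrm{Cl}(L)$ is the class $[\mathfrak{p}^{f}] = [N_{K/L}(\mathfrak{P})]$. So the composite $\mathrm{Cl}(K) \to \Gal(H_L/L) = \mathrm{Cl}(L)$ is exactly $\overline{N}$, and its image is simultaneously $N_{K/L}(\mathrm{Cl}(K))$ and $\Gal(H_L/(K \cap H_L))$. Since the first assertion gives $N_{K/L}(\mathrm{Cl}(K)) = 1$, we get $\Gal(H_L/(K \cap H_L)) = 1$, that is $K \cap H_L = H_L$, which is precisely $H_L \subseteq K$.

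The first assertion is essentially bookkeeping once Propositions~\ref{prop:genclid} and~\ref{prop:compat} are available. The substantive step, and the one where I would be most careful, is the class field theory identification $N_{K/L}(\mathrm{Cl}(K)) = \Gal(H_L/(K \cap H_L))$: I must check that $KH_L/K$ is genuinely unramified (compositum with the everywhere-unramified $H_L/L$; note that $K$, being Galois over~$\Q$ and containing an imaginary quadratic field, is totally complex, so there is no subtlety at the infinite places), that restriction to~$H_L$ really sends $\Frob_{\mathfrak{P}}$ to $\Frob_{\mathfrak{p}}^{f}$, and that the Frobenius elements fill out $\Gal(H_L/(K\cap H_L))$ (surjectivity of the Artin map, equivalently Chebotarev). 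Granting these, the implication $N_{K/L}(\mathrm{Cl}(K)) = 1 \Rightarrow H_L \subseteq K$ is immediate.
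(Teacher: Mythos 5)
Your proof is correct, and its first half is exactly the paper's: reduce the principality statement to the ideals of~$\J_K$ via Proposition~\ref{prop:genclid}, then read off from Proposition~\ref{prop:compat} that $N_{K/L}(\q)$ equals $\bq\OL$ (case BO) or $\bqb\OL$ (case BO'). The second half reaches the same conclusion by a recognizably different dialect of class field theory. The paper argues idelically: it writes the commutative square identifying $\Gal(KH_L/K)$ with $\A^{\times}_K/K^{\times}N_{KH_L/K}\bigl(\A^{\times}_{KH_L}\bigr)$ and $\Gal(H_L/L)$ with $\A^{\times}_L/L^{\times}N_{H_L/L}\bigl(\A^{\times}_{H_L}\bigr)$, with the idele norm on the right and restriction on the left; triviality of the norm image plus injectivity of restriction force $KH_L=K$ at once. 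You argue ideal-theoretically: Artin map on $\mathrm{Cl}(K)$, the Frobenius compatibility $\Frob_{\mathfrak{P}}|_{H_L}=\Frob_{\mathfrak{p}}^{f}$, and the identification of the image of restriction with $\Gal\bigl(H_L/(K\cap H_L)\bigr)$, which you then equate with the (trivial) image of the norm on class groups. What your version buys is explicitness: the mechanism ``norm on ideal classes corresponds to restriction of Frobenius'' is laid bare rather than packaged in a functoriality diagram. What it costs is an extra verification the paper never needs: for the Artin map of $KH_L/K$ to be defined on, and surjective from, all of $\mathrm{Cl}(K)$, you must know that $KH_L/K$ is unramified at every place, finite and infinite. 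You do carry out this check (compositum with the everywhere-unramified $H_L/L$, and $K$ totally imaginary since it contains $L$), so your argument is complete; the idelic formulation simply sidesteps that step, which is why the paper's proof can be shorter.
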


\begin{proof}
Comme toute classe d'id\'eaux de~$K$ contient un id\'eal de~$\J_K$ (proposition~\ref{prop:genclid}), 
la premi\`ere assertion est vraie si et seulement si elle l'est pour tous les id\'eaux de~$\J_K$.
Or pour tout id\'eal~$\q$ de~$\J_K$, la norme de~$\q$ dans l'extension~$K/L$ est la norme de~$\q$ dans l'extension~$K/ \Lq$, qui est un id\'eal principal engendr\'e par~$\bq$ (type BO) ou~$\bqb$ (type BO').

On montre ensuite que la premi\`ere assertion implique la deuxi\`eme.
Soit~$H_L$ le corps de classes de Hilbert de~$L$. L'extension $KH_L/K$ est galoisienne, ab\'elienne et la th\'eorie du corps de classes fournit le diagramme commutatif suivant~:
$$
\xymatrix{
\Gal(KH_L/K) \ar[r]^(0.375){\sim} \ar@{^{(}-_{>}}[d]_{restriction} & \A^{\times}_K/K^{\times}N_{KH_L/K}\left(\A^{\times}_{KH_L}\right) \ar[d]^(0.5){N_{K/L}} \\
\Gal(H_L/L) \ar[r]^(0.4){\sim} & \A^{\times}_L/L^{\times}N_{H_L/L}\left(\A^{\times}_{H_L}\right).\\ 
}
$$
D'apr\`es la premi\`ere assertion, la fl\`eche verticale de droite est d'image triviale. Comme la fl\`eche verticale de gauche est injective, les corps~$KH_L$ et~$K$ co\"incident, ce qui implique que~$H_L$ est inclus dans~$K$.
\end{proof}

\begin{proposition}\label{prop:pdecL}
Le nombre premier~$p$ est d\'ecompos\'e dans~$L$.
Il existe un unique id\'eal premier~$\pL$ de~$L$ au-dessus de~$p$ v\'erifiant~: pour tout id\'eal premier~$\p$ de~$K$ au-dessus de~$\pL$,~$\ap$ est \'egal \`a~$12$ et  pour tout id\'eal premier~$\p$ de~$K$ au-dessus de~$\overline{\pL}$, $\ap$ est \'egal \`a~$0$.
\end{proposition}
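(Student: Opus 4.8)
The plan is to establish the splitting of $p$ in $L$ first, then to identify $\pL$ explicitly as the prime of $L$ lying below the fixed prime $\po$ of $K$, and finally to match the two cosets of $\Gal(K/L)$ in $G$ that govern the values $\ato$ with the two primes of $L$ above $p$.

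First I would record that $p$ is split in $L$. Since $p$ is unramified in $K$ and $L \subset K$, it is unramified in $L$; and since by Proposition~\ref{prop:mmredJK} every ideal of $\J_K$ is of type BO (or all of type BO') with $\Lq = L$, the decomposition analysis already carried out in the proof of Proposition~\ref{prop:compat} shows precisely that $p$ is split, not inert, in $\Lq = L$. I write $\pL$ and $\overline{\pL}$ for the two distinct primes of $L$ above $p$.

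Next I would unwind the definition of the family $(\ato)_{\tau \in G}$. Recall from Notation~\ref{not:po} that $\ato = a_{\tau^{-1}(\po)}$, and that the construction of $L$ in the ordinary case produces a unique imaginary quadratic $L \subset K$ with $H := \Gal(K/L)$ of index $2$ in $G$. By Proposition~\ref{prop:compat}, in the BO case one has $\ato = 12$ exactly when $\tau \in H$ and $\ato = 0$ otherwise, the BO' case being the mirror image. Set $\pL := \po \cap \OL$ in the BO case (in the BO' case take instead its conjugate). The extension $K/L$ is Galois with group $H$, so $H$ acts transitively on the primes of $K$ above $\pL$, and these form exactly the orbit $\{ h(\po) : h \in H \}$. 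Since $\tau \mapsto \tau^{-1}$ is a bijection of $H$, this orbit coincides with $\{ \tau^{-1}(\po) : \tau \in H \}$; hence every prime $\p$ of $K$ above $\pL$ is of the form $\tau^{-1}(\po)$ for some $\tau \in H$, and therefore satisfies $\ap = \ato = 12$.

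It then remains to treat the primes above $\overline{\pL}$ and to prove uniqueness. For $\tau \notin H$ the restriction $\tau|_L$ is the nontrivial element of $\Gal(L/\Q)$ (complex conjugation), so $\tau^{-1}(\po)$ lies above $\overline{\pL}$; as $\tau$ runs over the nontrivial coset $G \setminus H$, the primes $\tau^{-1}(\po)$ exhaust those of $K$ above $\overline{\pL}$, each carrying $\ap = \ato = 0$. Uniqueness of $\pL$ is then immediate: since $H$ is a proper nontrivial subgroup, both values $12$ and $0$ genuinely occur, so the two primes of $L$ above $p$ are distinguished by whether the primes of $K$ above them have $\ap = 12$ or $\ap = 0$, and only one can be the ``$12$''-prime. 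The main, and essentially only, obstacle is purely bookkeeping: keeping straight that $\ato$ is indexed through $\p = \tau^{-1}(\po)$ and correctly matching the two orbits of $H = \Gal(K/L)$ on the primes of $K$ above $p$ with the two primes $\pL, \overline{\pL}$ of $L$ beneath them. Once the transitivity of $K/L$ and the identity $\tau|_L \ne \mathrm{id} \Leftrightarrow \tau \notin H$ are in place, the statement follows.
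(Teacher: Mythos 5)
Your proposal is correct and follows essentially the same route as the paper: identify $\pL$ as $\po \cap \OL$ (or its conjugate in the BO' case), then use the transitivity of $\Gal(K/L)$ on the primes of $K$ above $\pL$ together with the indexing $\ato = a_{\tau^{-1}(\po)}$ and the description of $(\ato)_{\tau \in G}$ from Proposition~\ref{prop:compat} to match the two cosets of $\Gal(K/L)$ with the two primes of $L$ above $p$. The only cosmetic difference is organizational: the paper proves the equivalence ``$\ap = 12 \Leftrightarrow \p \mid \pL$'' by two separate implications in each of the cases BO and BO', whereas you partition the primes of $K$ above $p$ into the two $\Gal(K/L)$-orbits at once, which amounts to the same argument.
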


\begin{proof}
D'apr\`es la proposition~\ref{prop:compat},~$p$ est d\'ecompos\'e dans le corps~$\Lq$, \'egal \`a~$L$, pour tout~$\q$ dans~$\J_K$. 

On rappelle que les entiers~$\ato$ ont \'et\'e d\'efinis par~$\ato = a_{\tau^{-1}(\po)}$ pour un id\'eal~$\po$ de~$K$ au-dessus de~$p$ fix\'e (notations~\ref{not:po}).

Lorsque tous les id\'eaux de~$\J_K$ sont de type BO, on note~$\pL$ l'id\'eal de~$L$ situ\'e au-dessous de~$\po$ (pour tout id\'eal~$\q$ dans~$\J_K$,~$\pL$ est donc l'id\'eal~$\Poq$ des notations~\ref{not:poq}).
Soit~$\p$ un id\'eal premier de~$K$ au-dessus de~$\pL$.
Il existe un \'el\'ement~$\tau$ dans $\Gal (K / L)$ v\'erifiant~$\tau^{-1}(\po) = \p$.
Alors~$\ap$ est  \'egal \`a~$\ato$, lui-m\^eme \'egal \`a~$12$ car~$\tau$ est dans~$\Gal (K / L)$ qui co\"incide avec~$\Gal (K / \Lq)$.
R\'eciproquement, soit~$\p$ un id\'eal premier de~$K$ au-dessus de~$p$ tel que~$\ap$ est \'egal \`a~$12$.
Soit~$\tau$ dans~$G$ v\'erifiant $\tau^{-1}\po = \p$.
On a alors~$\ato = \ap = 12$ donc~$\tau$ est dans~$\Gal (K / \Lq)$, qui est \'egal \`a~$\Gal (K / L)$.
Cela implique~: 
$$
\p \cap \OL = \left(\tau^{-1}\po \right) \cap \OL = \tau^{-1}\left(\po \cap \OL \right) = \po \cap \OL = \pL.
$$
Ainsi on a~$\ap$ \'egal \`a~$12$ si et seulement si~$\p$ est au-dessus de~$\pL$ et~$\ap$ \'egal \`a~$0$ sinon~; dans ce deuxi\`eme cas,~$\p$ est au-dessus de~$\overline{\pL}$, qui est l'autre id\'eal de~$L$ au-dessus de~$p$.

Lorsque tous les id\'eaux de~$\J_K$ sont de type BO', on note~$\pL$ le conjugu\'e complexe de l'id\'eal de~$L$ situ\'e au-dessous de~$\po$
 (pour tout id\'eal~$\q$ dans~$\J_K$,~$\pL$ est donc le conjugu\'e complexe de l'id\'eal~$\Poq$ des notations~\ref{not:poq}).
  Soit~$\p$ un id\'eal premier de~$K$ au-dessus de~$\pL$.
  Il existe un \'el\'ement~$\tau$ dans~$G$ priv\'e de~$\Gal (K / L)  $ v\'erifiant  $\tau^{-1}(\po) = \p$.
  Alors~$\ap$ est \'egal \`a~$\ato$, donc \`a~$12$.
R\'eciproquement, soit~$\p$ un id\'eal premier de~$K$ au-dessus de~$p$ tel que~$\ap$ est \'egal \`a~$12$.
 Soit~$\tau$ dans~$G$ v\'erifiant~$\tau^{-1}(\po) = \p$.
  On a alors $\ato$ \'egal \`a~$\ap$ qui vaut~$12$, donc~$\tau$ est dans~$G$ priv\'e de~$\Gal (K / L)  $.
   Alors~$\tau$ et~$\tau^{-1}$ induisent sur~$L$ la conjugaison complexe et on obtient~: 
$$
\p \cap \OL = \left(\tau^{-1}(\po) \right) \cap \OL = \tau^{-1}\left(\po \cap \OL \right) = \overline{\po \cap \OL} = \pL.
$$
On a donc encore~$\ap$ \'egal \`a~$12$ si et seulement si~$\p$ est au-dessus de~$\pL$ et~$\ap$ \'egal \`a~$0$ sinon.
\end{proof}

\begin{proposition}\label{prop:recBO}
 Soient~$\al$ un \'el\'ement non nul de~$K$ premier \`a~$\pL$ et~$\prod_{\q \nmid \pL} \q^{\mathrm{val}_{\q}(\al)}$ la d\'ecomposition de l'id\'eal fractionnaire principal~$\al \OK$ en produit d'id\'eaux maximaux de~$K$. Alors on a~:
 $$
 \prod\limits_{\q \nmid \pL} \mu \left(\sq\right)^{\mathrm{val}_{\q}(\al)} = N_{K/L}\left(\al\right)^{12} \Mod\  \pL.
 $$ 
\end{proposition}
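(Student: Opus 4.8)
The plan is to adapt the idelic computation carried out in the proofs of Propositions~\ref{prop:recmu} and~\ref{prop:recmuglob}, exploiting the precise description of the exponents~$\ap$ furnished by Proposition~\ref{prop:pdecL}. The point is that here~$\al$ is only assumed prime to~$\pL$, so it need not be a unit at the primes above~$\overline{\pL}$ (which lie above~$p$), and one cannot simply quote Proposition~\ref{prop:recmuglob}. First I would record that in the ordinary case the character~$\mu$ is unramified at every finite prime of~$K$ not dividing~$\pL$: away from~$p$ this is Propositions~\ref{prop:ramfrobhorspmult} and~\ref{prop:ramhorspbon}, while at the primes above~$\overline{\pL}$ it follows from Proposition~\ref{prop:pdecL}, which gives~$\ap = 0$ there. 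Hence at such a prime~$\p \mid \overline{\pL}$ the map~$\amu \circ r_{\p}$ behaves exactly as at a prime away from~$p$: it is trivial on units (by the formula of Section~\ref{ssec:cclmu} with~$\ap = 0$) and, by unramifiedness, sends any uniformizer to~$\sqb$, so it contributes~$\mu(\sq)^{\mathrm{val}_{\q}(\al)}$.

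Next I would apply the reciprocity map to the principal idele~$(\iota_{\nu}(\al))_{\nu}$, whose image under~$\amu \circ r$ is trivial, and compute the local contributions place by place as in Section~\ref{ssec:cclmu}. The infinite places contribute trivially; every finite place~$\q \nmid \pL$, whether away from~$p$ or above~$\overline{\pL}$, contributes~$\mu(\sq)^{\mathrm{val}_{\q}(\al)}$ by the previous paragraph; and at each prime~$\p \mid \pL$ the element~$\al$ is a unit and~$\ap = 12$, so the contribution is~$\left(N_{\Kp/\Qp}(\iota_{\p}(\al)) \Mod p\right)^{-12}$. Collecting these, the triviality of the total product yields
$$
\prod_{\q \nmid \pL} \mu(\sq)^{\mathrm{val}_{\q}(\al)} = \prod_{\p | \pL} N_{\Kp/\Qp}(\iota_{\p}(\al))^{12} \Mod p.
$$

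It then remains to identify the right-hand side with~$N_{K/L}(\al)^{12} \Mod \pL$, and this is the step requiring the most care. Here I would invoke the local--global factorisation of the norm: since~$p$ is split in~$L$ (Proposition~\ref{prop:pdecL}), the completion~$L_{\pL}$ is~$\Qp$, and the image of~$N_{K/L}(\al)$ under the embedding of~$L$ into~$L_{\pL} = \Qp$ equals~$\prod_{\p | \pL} N_{\Kp/\Qp}(\iota_{\p}(\al))$, the product being over the primes of~$K$ above~$\pL$. Reducing modulo~$\pL$, which coincides with reduction modulo~$p$ in~$\Qp$ precisely because~$p$ is split so that~$\OL/\pL = \Fp$, and raising to the twelfth power gives the assertion. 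The genuine content of the argument is thus the bookkeeping of the three types of primes (away from~$p$, above~$\overline{\pL}$, above~$\pL$) together with this norm factorisation: one must use that the definition of~$\pL$ in Proposition~\ref{prop:pdecL} is exactly what pins down the primes carrying~$\ap = 12$, and that the splitting of~$p$ is what places both sides of the congruence in the same~$\Fpx$.
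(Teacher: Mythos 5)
Your proof is correct and takes essentially the same route as the paper: an idelic reciprocity computation analogous to Proposition~\ref{prop:recmu}, in which the places above~$\overline{\pL}$ contribute $\mu(\sq)^{\mathrm{val}_{\q}(\al)}$ because $\ap=0$ makes $\mu$ unramified there, followed by the identification $\prod_{\p \mid \pL} N_{\Kp/\Qp}\left(\iota_{\p}(\al)\right)^{12} \equiv N_{K/L}(\al)^{12} \Mod \pL$ using that $p$ splits in~$L$, so that $L_{\pL}=\Qp$. The only difference is one of explicitness: you spell out the treatment of the primes above~$\overline{\pL}$ (trivial on units, uniformizers sent to Frobenius), which the paper subsumes under its appeal to a \emph{d\'emonstration analogue} to that of Proposition~\ref{prop:recmu}.
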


\begin{proof}
Le caract\`ere~$\mu$ est non ramifi\'e aux places finies de~$K$ premi\`eres \`a~$\pL$ et pour tout id\'eal premier de~$K$ au-dessus de $\pL$,~$\ap$ est \'egal \`a~$12$.
Par une d\'emonstration analogue \`a celle de la proposition~\ref{prop:recmu}, on obtient~:
$$
\prod\limits_{\q \nmid \pL} \mu \left(\sq\right)^{\mathrm{val}_{\q}(\al)} = \prod\limits_{\p | \pL} N_{\Kp/\Qp}\left(\iota_{\p}(\al)\right)^{12} \Mod  p.
$$
Le nombre premier~$p$ \'etant d\'ecompos\'e dans~$L$ (proposition~\ref{prop:pdecL}), le compl\'et\'e~$L_{\pL}$ de~$L$ en~$\pL$ co\"incide avec~$\Qp$ et on a~:
 $$
 \begin{array}{r c l}
 \prod\limits_{\p | \pL} N_{\Kp/\Qp}\left(\iota_{\p}(\al)\right)^{12} \Mod\ p & = & \left( \prod\limits_{\p | \pL} N_{\Kp/L_{\pL}}\left(\iota_{\p}(\al)\right)\right)^{12} \Mod  p \\
  & = &  N_{K/L}\left( \al \right) ^{12} \Mod \pL .\\
\end{array}
$$
\end{proof}

\begin{proposition}\label{prop:typeBO}
Soient~$\q$ un id\'eal maximal de~$K$ premier \`a~$\pL$ et~$\alq$ dans~$L$ un g\'en\'erateur de~$N_{K/L}(\q)$. Alors on a~:
$$
\mu(\sq) = \alq^{12} \Mod \pL.
$$
\end{proposition}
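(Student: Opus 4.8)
Le plan est de montrer que les deux applications $\q \mapsto \mu(\sq)$ et $\q \mapsto \alq^{12} \Mod \pL$, définies sur le groupe des idéaux de~$K$ premiers à~$\pL$, coïncident~; la proposition en est le cas particulier où~$\q$ est premier. On remarque d'abord que la seconde application est multiplicative et bien définie, indépendamment du choix du générateur~$\alq$ de~$N_{K/L}(\q)$~: comme~$L$ est quadratique imaginaire, ses unités sont des racines de l'unité d'ordre divisant~$12$, donc~$u^{12} = 1$ pour toute unité~$u$ de~$\OL$, et~$\alq^{12} \Mod \pL$ ne dépend que de l'idéal~$N_{K/L}(\q)$. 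On note~$\chi$ le quotient de ces deux applications, à valeurs dans~$\Fpx$.

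On établit ensuite que~$\chi$ se factorise par le groupe des classes~$\mathrm{Cl}(K)$. La proposition~\ref{prop:recBO}, appliquée à un élément~$\al$ de~$K$ premier à~$\pL$, donne $\prod_{\q} \mu(\sq)^{\mathrm{val}_{\q}(\al)} = N_{K/L}(\al)^{12} \Mod \pL$~; comme~$N_{K/L}(\al)$ engendre l'idéal~$N_{K/L}(\al\OK)$, cela signifie exactement que~$\chi$ est trivial sur tout idéal principal premier à~$\pL$. Étant un morphisme du groupe des idéaux premiers à~$\pL$ dans~$\Fpx$, trivial sur les idéaux principaux, il induit donc un morphisme de~$\mathrm{Cl}(K)$ dans~$\Fpx$.

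Il reste à vérifier que~$\chi$ est trivial sur un système de générateurs de~$\mathrm{Cl}(K)$, pour quoi on utilise l'ensemble~$\J_K$. Dans le cas ordinaire considéré ici, tous les idéaux de~$\J_K$ sont d'un même type, BO ou BO' (proposition~\ref{prop:mmredJK}). Pour~$\q$ dans~$\J_K$, la proposition~\ref{prop:compat} identifie~$N_{K/L}(\q)$ à~$\bq\OL$ (type BO) ou~$\bqb\OL$ (type BO'), de sorte qu'on peut prendre~$\alq = \bq$ (resp.~$\bqb$)~; par ailleurs~$\mu(\sq) = \la(\sq)^{12} = \bq^{12} \Mod \Poq$. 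En rapprochant ceci de l'identification de~$\Poq$ avec~$\pL$ dans le cas BO (resp. avec le conjugué complexe~$\overline{\pL}$ dans le cas BO', la conjugaison échangeant les deux idéaux de~$L$ au-dessus de~$p$) fournie par la proposition~\ref{prop:pdecL}, on obtient~$\chi(\q) = 1$. Comme toute classe d'idéaux de~$K$ contient un élément de~$\J_K$ (proposition~\ref{prop:genclid}), le morphisme~$\chi$ est trivial, ce qui donne l'égalité annoncée pour tout~$\q$.

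L'obstacle principal tient à ce que~$\q$ n'est en général pas principal~: on ne peut donc pas lire directement~$\mu(\sq)$, à la puissance première, sur la loi de réciprocité, qui ne contrôle que les idéaux principaux. Appliquée à un générateur de~$\q^{h_K}$, la proposition~\ref{prop:recBO} ne fournit en effet que la relation aux puissances~$h_K$-ièmes $\mu(\sq)^{h_K} = \alq^{12 h_K} \Mod \pL$, dont on ne peut extraire~$\mu(\sq) = \alq^{12}$ sans contrôler le pgcd de~$h_K$ et~$p-1$. C'est précisément le passage au quotient par~$\mathrm{Cl}(K)$, joint au recours au système générateur~$\J_K$ sur lequel l'égalité est exacte, qui permet de contourner cette difficulté.
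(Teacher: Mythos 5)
Votre d\'emonstration est correcte et suit pour l'essentiel la m\^eme voie que celle du texte, avec exactement les m\^emes ingr\'edients~: valeurs de~$\mu(\sq)$ et de~$N_{K/L}(\q)$ pour~$\q$ dans~$\J_K$ (propositions~\ref{prop:compat} et~\ref{prop:pdecL}), loi de r\'eciprocit\'e de la proposition~\ref{prop:recBO}, proposition~\ref{prop:genclid}, et le fait que les unit\'es de~$L$ sont des racines douzi\`emes de l'unit\'e. La seule diff\'erence est de pr\'esentation~: l\`a o\`u le texte \'ecrit explicitement~$\q = \q' \cdot (\al\OK)$ avec~$\q'$ dans~$\J_K$ puis combine les \'egalit\'es obtenues, vous faites passer le caract\`ere quotient~$\chi$ au groupe des classes et le montrez trivial sur les repr\'esentants fournis par~$\J_K$, ce qui revient au m\^eme calcul.
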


\begin{proof}
Il existe un id\'eal~$\q'$ dans $\J_K$ et un \'el\'ement~$\al$ non nul de~$K$ v\'erifiant~:
$$
\q = \q' \cdot \left(\al \OK \right).
$$

D'apr\`es la d\'emonstration de la proposition~\ref{prop:pdecL}~:
\begin{itemize}
\item[$\bullet$] si tous les id\'eaux de~$\J_K$ sont de type BO, alors~$\pL$ est \'egal \`a l'id\'eal~$\Poqpr$, la norme de~$\q'$ dans l'extension~$K/L$ est~$\bqpr\OL$ et on a (notations \ref{not:poq} et proposition~\ref{prop:compat})
$$
\mu(\sqpr) = \bqpr^{12} \Mod \Poqpr = \bqpr^{12} \Mod \pL ;
$$
\item[$\bullet$] si tous les id\'eaux de~$\J_K$ sont de type BO' alors~$\pL$ est le conjugu\'e complexe de l'id\'eal~$\Poqpr$, la norme de~$\q'$ dans~$K/L$ est~$\bqprb \OL$ et on a
$$
\mu(\sqpr) = \bqpr^{12} \Mod \Poqpr = \bqpr^{12} \Mod \overline{\pL} = \bqprb^{12} \Mod \pL.
$$

\end{itemize}
Dans les deux cas, il existe un \'el\'ement~$\al_{\q'}$ de~$\OL$ qui engendre l'id\'eal~ $N_{K/L}(\q')$ et v\'erifie~:
$$
\mu(\sqpr) = \al_{\q'}^{12} \Mod \pL.
$$

Soit~$\alq$ dans~$\OL$ un g\'en\'erateur de l'id\'eal~$N_{K/L}(\q)$. On a~:
$$
N_{K/L}(\q) = N_{K/L}(\q') \times \left(N_{K/L}(\al)\OL \right),
$$
ce qui implique que les \'el\'ements~$\alq$ et~$ \al_{\q'}N_{K/L}(\al)$ de~$L$ diff\`erent d'une unit\'e de~$L$. 
Comme~$L$ est un corps quadratique imaginaire, cette unit\'e est une racine douzi\`eme de l'unit\'e~; on a donc
$$
\alq^{12} = \left(\al_{\q'}N_{K/L}(\al)\right)^{12}.
$$
Enfin, on a d'apr\`es la proposition~\ref{prop:recBO}
 $$
  \mu(\sq)  = \mu(\sqpr) \times \left(N_{K/L}\left(\al\right)^{12} \Mod\  \pL \right)
  $$
 d'o\`u
 $$
 \mu(\sq)  = \left(\al_{\q'}^{12} \Mod \pL \right) \times \left(N_{K/L}\left(\al\right)^{12} \Mod\  \pL \right) = \alq^{12} \Mod \pL.
 $$
\end{proof}

\subsection{Homoth\'eties dans l'image de la repr\'esentation~$\phiEp$}

On remarque qu'on a, pour tout couple~$( \alpha , \beta)$ dans~$\Fpx \times \Fp $, l'\'egalit\'e suivante dans $\mathrm{M}_2(\Fp)$~:
$$
\begin{pmatrix}
\al & \beta\\
0 & \al
\end{pmatrix}^{p}
 =
\begin{pmatrix}
\al^{p} & p\al^{p-1}\beta\\
0 & \al^{p}
\end{pmatrix}
 =
\begin{pmatrix}
\al^{p} & 0\\
0 & \al^{p}
\end{pmatrix}
 =
\begin{pmatrix}
\al & 0\\
0 & \al
\end{pmatrix}.
$$
Pour montrer que l'image de~$\phiEp$ contient une homoth\'etie de rapport~$x$ ($x$ appartenant \`a~$\Fpx$), il suffit donc de montrer que l'image de~$\phiEp$ contient un \'el\'ement dont la diagonale est~$(x , x)$.

\subsubsection{Homoth\'eties pour le type supersingulier}

\begin{proposition}\label{prop:homBS}
 On suppose que~$p$ est strictement sup\'erieur \`a~$C_K$ et qu'on est dans le cas supersingulier (partie~\ref{sssec:susec}).
  Alors l'image de~$\phiEp$ contient les carr\'es des homoth\'eties. 
 \end{proposition}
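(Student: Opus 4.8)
L'id\'ee est d'exhiber, pour chaque carr\'e $s$ de $\Fpx$, un \'el\'ement $g$ de $G_K$ dont l'image $\phiEp(g)$ est triangulaire sup\'erieure de diagonale $(s,s)$. La remarque pr\'ec\'edant l'\'enonc\'e (prise de la puissance $p$-i\`eme) montrerait alors que l'homoth\'etie de rapport $s$ appartient \`a l'image de $\phiEp$, ce qui suffirait puisque les carr\'es des homoth\'eties sont exactement les homoth\'eties dont le rapport est un carr\'e. Comme la diagonale de $\phiEp(g)$ est $(\la(g), \kip(g)\la(g)^{-1})$, tout revient \`a trouver, pour chaque carr\'e $s$, un \'el\'ement $g$ v\'erifiant $\la(g) = s$ et $\kip(g) = s^2$ (ces deux conditions forcent bien l'\'egalit\'e des deux coefficients diagonaux).

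Pour cela, je travaillerais dans un sous-groupe d'inertie $\Ip$ en une place $\p$ de $K$ au-dessus de $p$. D'apr\`es la d\'emonstration de la proposition~\ref{prop:typeBS}, on a $\ep = 4$, $\ap = 6$ et $p \equiv 3 \Mod 4$~; la proposition~\ref{prop:inpbon} (avec $\rp = 2$) donne alors, sur $\Ip$, l'\'egalit\'e de caract\`eres $\la^4 = \kip^2$. Comme $p$ est non ramifi\'e dans $K$, le corps $\Kp$ est non ramifi\'e sur $\Qp$, donc $\Kp(\mu_p)/\Kp$ est totalement et mod\'er\'ement ramifi\'ee de degr\'e $p-1$~: la restriction de $\kip$ \`a $\Ip$ est surjective sur $\Fpx$. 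Comme $\Fpx$ est d'ordre premier \`a $p$, les restrictions de $\la$ et de $\kip$ \`a $\Ip$ se factorisent par le quotient mod\'er\'e de $\Ip$, qui est procyclique~; la surjectivit\'e de $\kip|_{\Ip}$ entra\^ine alors que $\la|_{\Ip}$ est une puissance de $\kip|_{\Ip}$, disons $\la|_{\Ip} = \kip^a|_{\Ip}$ pour un entier $a$.

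La relation $\la^4 = \kip^2$ sur $\Ip$ se traduit, via la surjectivit\'e de $\kip|_{\Ip}$ (d'ordre $p-1 = 2m$, avec $m = \frac{p-1}{2}$ impair car $p \equiv 3 \Mod 4$), par la congruence $4a \equiv 2 \Mod{p-1}$, c'est-\`a-dire $2a \equiv 1 \Mod m$, d'o\`u $m \mid (2a-1)$. Soit alors $s$ un carr\'e de $\Fpx$~: son ordre divise l'ordre $m$ du groupe des carr\'es. En choisissant $g$ dans $\Ip$ tel que $\kip(g) = s^2$ (possible par surjectivit\'e), j'obtiendrais $\la(g) = \kip(g)^a = s^{2a} = s\cdot s^{2a-1} = s$, la derni\`ere \'egalit\'e venant de $\mathrm{ord}(s) \mid m \mid (2a-1)$. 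La diagonale de $\phiEp(g)$ serait donc $(s, s^2 s^{-1}) = (s,s)$, ce qui ach\`everait la preuve.

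Le point d\'elicat est l'\'etape assurant que $\la|_{\Ip}$ est une puissance de $\kip|_{\Ip}$ (via la procyclicit\'e de l'inertie mod\'er\'ee et la surjectivit\'e du caract\`ere cyclotomique sur l'inertie en $p$), puis le contr\^ole exact de l'exposant $a$ modulo $m$. Tout repose in fine sur la congruence $p \equiv 3 \Mod 4$, qui rend $m$ impair~: c'est elle qui permet d'inverser $2$ modulo $m$ et de r\'ecup\'erer pr\'ecis\'ement l'ensemble des carr\'es comme ensemble des valeurs prises par $\la$ sur les \'el\'ements $g$ convenables de $\Ip$.
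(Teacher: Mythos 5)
Votre d\'emonstration est correcte, mais son m\'ecanisme central diff\`ere de celui du texte. Les deux preuves partagent le m\^eme cadre~: travail dans un sous-groupe d'inertie~$\Ip$ en une place~$\p$ de~$K$ au-dessus de~$p$, relation~$\la^4 = \kip^2$ sur~$\Ip$ (issue de~$\ap = 6$ et de la proposition~\ref{prop:inpbon}), surjectivit\'e de la restriction de~$\kip$ \`a~$\Ip$ et astuce de la puissance~$p$-i\`eme. La diff\'erence tient \`a la fa\c{c}on de produire des \'el\'ements dont l'image a une diagonale constante. Le texte se contente d'\'elever les \'el\'ements d'inertie \`a la puissance~$4$~: de~$\la^4 = \kip^2$ sur~$\Ip$ on tire~$\left( \kip\la^{-1} \right)^4 = \kip^4\kip^{-2} = \kip^2 = \la^4$ sur~$\Ip$, donc pour~$\si$ dans~$\Ip$ la diagonale de~$\phiEp(\si^4)$ est~$\left(\kip(\si)^2 , \kip(\si)^2\right)$, et la surjectivit\'e de~$\kip$ donne toutes les diagonales~$(x^2 , x^2)$~; aucune structure suppl\'ementaire de l'inertie n'est requise et la congruence~$p \equiv 3 \Mod 4$ n'y joue aucun r\^ole direct. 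Vous d\'eterminez au contraire compl\`etement~$\la$ sur~$\Ip$~: en passant au quotient mod\'er\'e, qui est procyclique, et en utilisant la surjectivit\'e de~$\kip$, vous \'ecrivez~$\la = \kip^a$ sur~$\Ip$, puis vous traduisez~$\la^4 = \kip^2$ en~$4a \equiv 2 \Mod{p-1}$, c'est-\`a-dire~$2a \equiv 1 \Mod m$ avec~$m = \frac{p-1}{2}$~; c'est l\`a que l'hypoth\`ese~$p \equiv 3 \Mod 4$ (qui rend~$m$ impair, donc~$2$ inversible modulo~$m$) devient indispensable. Vous obtenez ainsi, pour chaque carr\'e~$s$ de~$\Fpx$, un \'el\'ement~$g$ de~$\Ip$ tel que~$\phiEp(g)$ a pour diagonale exactement~$(s , s)$, renseignement plus pr\'ecis que celui du texte (qui ne contr\^ole que la puissance quatri\`eme des \'el\'ements d'inertie). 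En contrepartie, votre argument demande la structure de l'inertie mod\'er\'ee et une arithm\'etique modulo~$p-1$ absentes de la preuve du texte, laquelle est plus \'economique et ind\'ependante de la parit\'e de~$\frac{p-1}{2}$~; les deux aboutissent \`a la m\^eme conclusion.
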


\begin{proof}
Soit~$\p$ un id\'eal premier de~$K$ au-dessus de~$p$.
Comme~$\ap$ est \'egal \`a~$6$ (proposition~\ref{prop:compat}), la proposition~\ref{prop:inpbon} donne que le caract\`ere~$\la^4$ restreint au sous-groupe d'inertie~$\Ip$ est \'egal au carr\'e du caract\`ere cyclotomique.
Le deuxi\`eme caract\`ere diagonal de~$\phiEp$ est~$\kip\la^{-1}$ et v\'erifie alors \'egalement en restriction \`a~$\Ip$~:
$$
\left( \kip\la^{-1} \right)^4 = \kip^4 \la^{-4} = \kip^4 \kip^{-2 } = \kip^{2 }.
$$
Comme on a suppos\'e~$p$ non ramifi\'e dans~$K$, le caract\`ere cyclotomique restreint \`a~$\Ip$ est surjectif dans~$\Fpx$.
Ainsi, pour tout~$x$ dans~$\Fpx$, l'image de~$\phiEp$ contient un \'el\'ement de diagonale~$(x^2 , x^2)$ et la puissance $p$-i\`eme de cet \'el\'ement qui est une homoth\'etie de rapport~$x^2$.
\end{proof}

\subsubsection{Homoth\'eties pour le type ordinaire}

\begin{proposition}\label{prop:homBO}
On suppose que~$p$ est strictement sup\'erieur \`a~$C_K$ et qu'on est dans le cas ordinaire (partie~\ref{sssec:ord}).
 Alors 
   l'image  de~$\phiEp$ contient les puissances douzi\`emes des homoth\'eties. 
\end{proposition}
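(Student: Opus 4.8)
The plan is to follow the endgame of the supersingular case (proposition~\ref{prop:homBS}): for each $x$ in $\Fpx$ I will produce an element of the image of $\phiEp$ whose two diagonal coefficients are both equal to $x^{12}$, and then raise it to the $p$-th power to kill the upper-right coefficient, thanks to the matrix identity recalled at the beginning of this subsection. The new feature compared with the supersingular case is that a single inertia subgroup at $p$ no longer suffices: by proposition~\ref{prop:pdecL} the integer $\ap$ equals $12$ at the primes of $K$ above $\pL$ and equals $0$ at the primes above $\overline{\pL}$, so on a single $\Ip$ the twelfth powers of the two diagonal characters do not agree. I will therefore combine inertia at a prime above $\pL$ with inertia at a prime above $\overline{\pL}$.

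Since $p$ is split in $L$ (proposition~\ref{prop:pdecL}), I fix a prime $\p$ of $K$ above $\pL$ and a prime $\p'$ of $K$ above $\overline{\pL}$; these are two distinct primes above $p$, with distinct inertia subgroups $\Ip$ and $I_{\p'}$. By the definition of $\ap$ (propositions~\ref{prop:inpmult} and~\ref{prop:inpbon}) combined with proposition~\ref{prop:pdecL}, the character $\la^{12}$ restricts on $\Ip$ to $\kip^{12}$ and on $I_{\p'}$ to the trivial character. Consequently the second diagonal character satisfies $(\kip\la^{-1})^{12} = \kip^{12}\la^{-12}$, which restricts to the trivial character on $\Ip$ and to $\kip^{12}$ on $I_{\p'}$. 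Because $p$ is unramified in $K$, the cyclotomic character $\kip$ is surjective onto $\Fpx$ in restriction to each of $\Ip$ and $I_{\p'}$; so, given $x$ in $\Fpx$, I may pick $\sigma$ in $\Ip$ and $\sigma'$ in $I_{\p'}$ with $\kip(\sigma) = \kip(\sigma') = x$.

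I then consider $h = \phiEp(\sigma)\phiEp(\sigma') = \phiEp(\sigma\sigma')$, an upper triangular matrix whose diagonal is the product of the two diagonals, and I compute the diagonal of $h^{12}$ using that $\la$ and $\kip\la^{-1}$ are multiplicative. Its top-left entry is $\la^{12}(\sigma)\,\la^{12}(\sigma') = x^{12}\cdot 1$ and its bottom-right entry is $(\kip\la^{-1})^{12}(\sigma)\,(\kip\la^{-1})^{12}(\sigma') = 1\cdot x^{12}$, so $h^{12}$ has diagonal $(x^{12}, x^{12})$. Its $p$-th power $h^{12p} = \phiEp\big((\sigma\sigma')^{12p}\big)$ then belongs to the image of $\phiEp$ and is the homothety of ratio $x^{12}$. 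Letting $x$ run through $\Fpx$ produces all twelfth powers of homotheties, which is the claim.

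The only step that needs genuine care is ensuring that the contributions of $\sigma$ and $\sigma'$ land on complementary diagonal entries, that is, that $\ap = 12$ above $\pL$ is matched by $a_{\p'} = 0$ above $\overline{\pL}$; this is precisely proposition~\ref{prop:pdecL}, which also guarantees that $p$ splits in $L$ so that both kinds of primes are available. Everything else reduces to the surjectivity of $\kip$ on inertia and the $p$-th power trick already exploited in the supersingular case.
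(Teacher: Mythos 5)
Your proof is correct and follows essentially the same route as the paper: both rely on proposition~\ref{prop:pdecL} to get the complementary values $\ap = 12$ above $\pL$ and $a_{\p'} = 0$ above $\overline{\pL}$, use the surjectivity of $\kip$ on each inertia subgroup (since $p$ is unramified in $K$) to build an element with diagonal $(x^{12}, x^{12})$ from a product of two inertia elements, and conclude with the $p$-th power trick. The only cosmetic difference is that you raise the product $\phiEp(\sigma\sigma')$ to the twelfth power at the end, whereas the paper first produces elements whose images have diagonals $(x^{12},1)$ and $(1,x^{12})$ and then multiplies them.
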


\begin{proof}
Soit $x$ un \'el\'ement de $\Fpx$.

Soit~$\p$ un id\'eal premier de~$K$ au-dessus de~$\pL$. 
D'apr\`es la proposition~\ref{prop:pdecL},~$\ap$ est \'egal \`a~$12$ donc la restriction \`a~$\Ip$ de la diagonale de~$\phiEp$, \'elev\'ee \`a la puissance~$12$, vaut~$( \mu , \kip^{12} \mu^{-1} ) =  (\kip^{12} , 1 ) $.
Comme~$\kip$ est surjectif de~$\Ip$ dans~$\Fpx$, il existe dans~$G_K$ un \'el\'ement~$\si$ tel que la diagonale de~$\phiEp(\si)$ est~$(x^{12} , 1)$.

Soit~$\p'$ un id\'eal premier de~$K$ au-dessus de~$\overline{\pL}$. 
D'apr\`es la proposition~\ref{prop:pdecL},~$a_{\p'}$ est nul donc la restriction \`a~$I_{\p'}$ de la diagonale de~$\phiEp$, \'elev\'ee \`a la puissance~$12$, vaut~$( \mu , \kip^{12} \mu^{-1} ) =  ( 1 , \kip^{12} ) $.
Comme~$\kip$ est surjectif de~$I_{\p'}$ dans~$\Fpx$, il existe dans~$G_K$ un \'el\'ement~$\si'$ tel que la diagonale de~$\phiEp(\si')$ est~$(1 , x^{12})$.

Alors l'image par~$\phiEp$ du produit~$\si \si'$ de~$G_K$  a pour diagonale~$(x^{12} ,x^{12})$~; sa puissance~$p$-i\`eme est donc une homoth\'etie de rapport~$x^{12}$ contenue dans l'image de~$\phiEp$.
\end{proof}

\nocite{}
\shorthandoff{:}
\bibliographystyle{plain}
\bibliography{Cici.bib}

\begin{thebibliography}{10}

\bibitem{[Bil]}
Nicolas Billerey.
\newblock Crit\`eres d'irr\'eductibilit\'e pour les repr\'esentations des
  courbes elliptiques.
\newblock {\em Int. J. Number Theory}, 7(4):1001--1032, 2011.

\bibitem{[BuGy]}
Yann Bugeaud and K{\'a}lm{\'a}n Gy{\H{o}}ry.
\newblock Bounds for the solutions of unit equations.
\newblock {\em Acta Arith.}, 74(1):67--80, 1996.

\bibitem{[Dav08]}
Agn\`es David.
\newblock {\em Caract\`ere d'isog\'enie et borne uniforme pour les
  homoth\'eties}.
\newblock PhD thesis, IRMA, Strasbourg, 2008.

\bibitem{[Hudig]}
Agn\`es David.
\newblock Borne uniforme pour les homoth\'eties dans l'image de {G}alois
  associ\'ee aux courbes elliptiques.
\newblock {\em J. Number Theory}, 131(11):2175--2191, 2011.
\newblock Disponible en ligne arXiv:1007.4725v1.

\bibitem{[Krau]}
Alain Kraus.
\newblock Courbes elliptiques semi-stables sur les corps de nombres.
\newblock {\em Int. J. Number Theory}, 3(4):611--633, 2007.

\bibitem{[LMO]}
J.~C. Lagarias, H.~L. Montgomery, and A.~M. Odlyzko.
\newblock A bound for the least prime ideal in the {C}hebotarev density
  theorem.
\newblock {\em Invent. Math.}, 54(3):271--296, 1979.

\bibitem{[LV]}
Eric Larson and Dmitry Vaintrob.
\newblock Determinants of subquotients of {G}alois representations associated
  to abelian varieties.
\newblock {\em preprint}, 2011.
\newblock Disponible en ligne arXiv:1110.0255v2.

\bibitem{[Maz]}
B.~Mazur.
\newblock Rational isogenies of prime degree (with an appendix by {D}.
  {G}oldfeld).
\newblock {\em Invent. Math.}, 44(2):129--162, 1978.

\bibitem{[Mer96]}
Lo{\"{\i}}c Merel.
\newblock Bornes pour la torsion des courbes elliptiques sur les corps de
  nombres.
\newblock {\em Invent. Math.}, 124(1-3):437--449, 1996.

\bibitem{[Mom]}
Fumiyuki Momose.
\newblock Isogenies of prime degree over number fields.
\newblock {\em Compositio Math.}, 97(3):329--348, 1995.

\bibitem{[Pa]}
Pierre Parent.
\newblock Bornes effectives pour la torsion des courbes elliptiques sur les
  corps de nombres.
\newblock {\em J. Reine Angew. Math.}, 506:85--116, 1999.

\bibitem{[Ray]}
Michel Raynaud.
\newblock Sch\'emas en groupes de type {$(p,\dots, p)$}.
\newblock {\em Bull. Soc. Math. France}, 102:241--280, 1974.

\bibitem{[Se]}
Jean-Pierre Serre.
\newblock Propri\'et\'es galoisiennes des points d'ordre fini des courbes
  elliptiques.
\newblock {\em Invent. Math.}, 15(4):259--331, 1972.

\bibitem{[SeTa]}
Jean-Pierre Serre and John Tate.
\newblock Good reduction of abelian varieties.
\newblock {\em Ann. of Math. (2)}, 88:492--517, 1968.

\bibitem{[Sil]}
Joseph~H. Silverman.
\newblock {\em The arithmetic of elliptic curves}, volume 106 of {\em Graduate
  Texts in Mathematics}.
\newblock Springer-Verlag, New York, 1992.
\newblock Corrected reprint of the 1986 original.

\end{thebibliography}

\end{document}